\newtheorem{Theorem}{Theorem}
\newtheorem{Definition}[Theorem]{Definition}
\newtheorem{Proposition}[Theorem]{Proposition}
\newtheorem{Lemma}[Theorem]{Lemma}
\newtheorem{Corollary}[Theorem]{Corollary}
\newtheorem{Remark}[Theorem]{Remark}
\newtheorem{Convention}[Theorem]{Convention}
\def\B{{\mathcal B}}
\def\R{{\mathcal R}}
\def\CC{{\mathbb C}}
\def\ZZ{{\mathbb Z}}
\def\RR{{\mathbb R}}
\def\sll{\mathfrak{sl}}
\def\sl2{\sll_2(\CC)}
\def\sln{\sll_N (\CC)}
\def\SLL{\operatorname{SL}}
\def\SL2{\SLL_2(\CC)}
\def\SLn{\SLL_N (\CC)}
\def\SUn{\operatorname{SU}(N)}
\def\tor{\operatorname{tor}}
\def\TOR{\operatorname{TOR}}
\def\Ad{\operatorname{Ad}}
\def\dG{{d}}
\def\rG{r}
\def\bS{b}
\def\b1{k}
\def\Rg{\R^*}
\keywords{Representation varieties, volume forms}
\subjclass{53D30, 57M99}
\begin{document}

\title[Volume forms on representation varieties]{Holomorphic volume forms on representation varieties of surfaces with boundary}


\author{Michael Heusener}
\address{Universit\'e Clermont Auvergne, CNRS, Laboratoire de Mathématiques Blaise Pascal, F-63000 Clermont-Ferrand, France}
\email{michael.heusener@uca.fr.}

\author{Joan Porti} 
\thanks{Both authors partially supported by  grant 
FEDER-Meic MTM2015--66165--P and by the Laboratoire de 
 Math\'{e}matiques Blaise Pascal of the Universit\'{e} Clermont Auvergne}
\address{ Departament de Matem\`atiques, Universitat Aut\`onoma de Barcelona, 
08193 Cerdanyola del Vall\`es, Spain, and 
Barcelona Graduate School of Mathematics (BGSMath) }
\email{porti@mat.uab.cat}

\date{\today}

\maketitle

\begin{abstract}
For closed and oriented hyperbolic surfaces, a formula of Witten establishes an equality
between two volume forms on the space of representations of the surface in a semisimple Lie group. 
One of the forms is a 
Reidemeister torsion, the other one  is the  power of 
the Atiyah-Bott-Goldman symplectic form.
We introduce an holomorphic volume form on the space of representations of the circle,
so that, for surfaces with boundary, it  appears as peripheral term in the generalization of Witten's formula. 
We compute explicit volume and symplectic forms for some simple surfaces and for the Lie group $\operatorname{SL}_N(\CC)$.
%
\end{abstract}

%
%

\section{Introduction}
\label{sec:intro}

Along this paper $S=S_{g,\bS}$ denotes a compact, oriented, connected surface with nonempty boundary, of genus $g$
and with 
$\bS\geq 1$ boundary components. 
We assume that $\chi(S)=2-2g-\bS<0$.
The fundamental group $\pi_1(S)$ is a free group $F_\b1$ 
of rank $\b1=1-\chi(S)\geq 2$.

Let $G$ be a connected, semisimple, complex, linear group
with compact real form $G_\RR$, e.g.~$G=\SLn$ and 
$G_\RR=\SUn$. 
We  also assume that $G$ is simply connected; notice that since $\pi_1(S)$ is free, their 
representations lift to the universal covering of the Lie group.

Fix  a nondegenerate symmetric bilinear $G$-invariant  form on the Lie algebra
$$
\B\colon \mathfrak{g}\times \mathfrak{g} \to \mathbb{C}\, ,
$$
such that the restriction of $\B$ to $\mathfrak{g}_\RR$,  the Lie algebra of $G_\RR$, is positive definite. This means that $\B$ is a negative multiple of the Killing form.

Let $\R(S,G)$ denote the set of
conjugacy classes of representations of $\pi_1(S)\cong F_\b1$ into $G$.
We are only interested in irreducible representations for which the centralizer coincides with the center of $G$. 
Following Johnson and Millson \cite{JohnsonMillson} we call such representations \emph{good} (see Definition~\ref{Definition:good}), and we use the notation  
$\Rg(S,G)$
to denote the corresponding  open  subset of $\R(S,G)$.

For a  closed surface $\Sigma$, the bilinear form $\B$  induces two $\mathbb{C}$-valued differential forms
on $ \Rg(\Sigma,G)$, a holomorphic volume form $\Omega_\Sigma$ 
defined as a Reidemeister torsion and the Atiyah-Bott-Goldman (holomorphic) symplectic  form $\omega$.
Witten has shown the following theorem for compact groups, here we state its complexification:

\begin{Theorem}[Witten, \cite{Witten}]
\label{Theorem:Witten}
If $\Sigma$ is a closed, oriented and hyperbolic surface,  then
$$
\Omega_\Sigma=\frac{\omega^{n}}{n!}
$$
on $\Rg(\Sigma,G)$,
where $n=\frac12\dim \Rg(\Sigma,G)=-\frac12\chi(\Sigma)\dim(G)$.
\end{Theorem}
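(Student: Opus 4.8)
The plan is to reduce the statement to a fact about a single cochain complex equipped with Poincar\'e duality. At a good representation $[\rho]\in\Rg(\Sigma,G)$ the Zariski tangent space is identified with the group cohomology $H^1(\Sigma;\mathfrak{g}_{\Ad\rho})$, and goodness forces $H^0(\Sigma;\mathfrak{g}_{\Ad\rho})=H^2(\Sigma;\mathfrak{g}_{\Ad\rho})=0$: indeed $H^0$ is the Lie algebra of the centralizer, which equals the center and hence vanishes since $G$ is semisimple, while $H^2$ vanishes by Poincar\'e duality. Thus both $\Omega_\Sigma$ and $\omega^n/n!$ are honest volume forms on the $2n$-dimensional space $H^1(\Sigma;\mathfrak{g}_{\Ad\rho})$, and it suffices to compare their values on a single basis.

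First I would fix a CW structure on $\Sigma$ and work with the cochain complex $(C^*(\Sigma;\mathfrak{g}_{\Ad\rho}),d)$ concentrated in degrees $0,1,2$. The bilinear form $\B$ together with the fundamental class $[\Sigma]$ equips this complex with a nondegenerate graded pairing $\langle\cdot,\cdot\rangle\colon C^i\otimes C^{2-i}\to\CC$ for which the differential is self-adjoint up to sign, i.e.\ a chain-level incarnation of Poincar\'e duality with self-dual coefficients. The induced pairing on $H^1$ is exactly the cup-product form $\omega(a,b)=\langle a\cup b,[\Sigma]\rangle$; because the cup product of degree-one classes is antisymmetric and $\B$ is symmetric, this pairing is symplectic, in agreement with $\omega$ being the Atiyah--Bott--Goldman form.

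The heart of the argument is a linear-algebra lemma: for any bounded cochain complex of finite-dimensional $\CC$-vector spaces carrying such a duality pairing, with cohomology concentrated in the middle degree, the Reidemeister torsion, viewed as a volume element on $\det H^1$, equals the Pfaffian of the induced symplectic pairing. I would prove this via the standard short exact sequences $0\to B^1\to Z^1\to H^1\to 0$ and $0\to Z^1\to C^1\to B^2\to 0$, noting that $H^0=H^2=0$ gives $B^1=d(C^0)\cong C^0$ and $C^1/Z^1\cong B^2=C^2$. The self-adjointness of $d$ identifies the contribution of $C^0$ with that of $C^2$ under the pairing, so these boundary contributions cancel in the alternating product defining the torsion, leaving the square root of the determinant of the pairing on $H^1$, namely the Pfaffian $\operatorname{Pf}(\omega)$. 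Since $\omega^n/n!$ evaluated on a basis $e_1,\dots,e_{2n}$ of $H^1$ equals $\operatorname{Pf}(\omega(e_i,e_j))$ by definition, the two volume forms coincide.

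The main obstacle I anticipate is pinning down the signs and the normalization exactly, rather than merely up to a nonzero scalar. Concretely: verifying the precise sign in the chain-level self-adjointness $\langle dx,y\rangle=\pm\langle x,dy\rangle$, checking that the torsion convention used to define $\Omega_\Sigma$ is compatible with the orientation conventions entering $[\Sigma]$ and the Pfaffian, and confirming independence of the chosen CW structure (which follows from invariance of Reidemeister torsion, but must be reconciled with the duality pairing). These bookkeeping steps are exactly where such an identity can fail to hold on the nose, so I would treat them with care, using the positive-definiteness of $\B$ on $\mathfrak{g}_\RR$ to fix a coherent orientation throughout.
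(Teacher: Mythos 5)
Your proposal is correct and follows essentially the same route as the paper: the paper only cites Witten for this closed-surface statement, but its proof of the boundary generalization (Theorem~\ref{Thm:vol}) is exactly your argument --- the Franz--Milnor--Witten duality formula for torsion (Proposition~\ref{Proposition:duality}) giving $\tor^2=\pm\det\left(\omega(h_i,h_j)\right)$, the Pfaffian identity \eqref{eqn:Pfaffian}, and the sign fixed via positivity on the compact real form together with a connectedness argument. The one technical wrinkle is that the chain-level duality pairing naturally lives between a cell structure and its \emph{dual} cell structure rather than as a self-pairing of a single cochain complex, which is reconciled by the subdivision-invariance of torsion, exactly as you anticipate in your last paragraph.
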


For surfaces with boundary $S$, we need to consider also   $\R(S,\partial S,G)_{\rho_0}$,
the relative set of conjugacy classes of representations
(for each peripheral curve  we require its image to be in a fixed conjugacy class), see Subsection~\ref{subssec:relative}. 
Let 
 $\Rg(S,\partial S,G)_{\rho_0}$ denote the corresponding open subset of good representations.
The holomorphic volume form $\Omega_S$ is  defined on $\Rg(S,G)$ but the holomorphic symplectic form $\omega$ is defined on
$\Rg(S,\partial S,G)_{\rho_0}$.
To relate both spaces and both forms,  we need to deal with each component of $\partial S$, which are circles.

We identify
the 
variety of representations of the circle $S^1$ with $G$,
by mapping each representation to the image of a fixed generator of $\pi_1(S^1)$. We restrict to regular representations,
namely that map the generator of $\pi_1(S^1)$ to regular elements. Then
$
\R^{reg}(S^1,G)\cong G^{reg}/ G
$.
Using that $G$ is simply connected (see Remark~\ref{rem:Gnonsimplyconnected} when $G$ is not simply connected), 
one of the consequences of Steinberg's theorem \cite{SteinbergIHES} is that 
$$
 \R^{reg}(S^1,G)\cong   G^{reg}/ G\cong \CC^r, 
$$
where $r=\operatorname{rank} G$, and that 
there is a natural isomorphism (Corollary~\ref{coro:Steinberg}):
$$
H^1(S^1; \Ad\rho)\cong T_{  [\rho] }  \R^{reg}(S^1,G)  .
$$
In Section~\ref{sec:peripheral} we show the existence of a form
$\nu\colon\bigwedge^{r} H^1( S^1,\Ad\rho)\to \CC$ defined by the formula
\begin{equation}
   \label{eqn:peripheral0}
\nu (\wedge\mathbf v)=\pm \sqrt{ \TOR(S^1,\operatorname{Ad}\rho, \mathfrak{o},\mathbf{u},\mathbf{v} )
 \, \langle \wedge\mathbf v,  \wedge\mathbf u \rangle } .
\end{equation}
Here $\mathbf{u}$ and $\mathbf{v }$ denote bases of $H^0( S^1,\Ad\rho)$ and $H^1( S^1,\Ad\rho) $ respectively
and  $\wedge\mathbf u$ and $\wedge \mathbf v$ 
their exterior product. Moreover, $ \TOR $ denotes the Turaev's sign refined torsion,
$\mathfrak{o}$  an \emph{homology orientation} of 
$H^*(S^1;\RR)$ (see Section~\ref{sec:sgn_ref_tor}),  and $\langle.\,,.\rangle$ the duality pairing  $ H^1( S^1,\Ad\rho)\times  H^0( S^1,\Ad\rho)\to \CC$.
We prove in Lemma~\ref{lemma:independent}  that the value $\nu(\wedge\mathbf v)\in\mathbb C$ is  independent of $\mathbf{u}$.

Steinberg theorem (\cite{SteinbergIHES}, see also  \cite{Steinberg,Popov}) provides an isomorphism
\[
(\sigma_1,\ldots,\sigma_r)\colon G^{reg}/ G\overset{\cong}    \longrightarrow \CC^r,
\]
where $\sigma_1,\ldots, \sigma_r$ denotes a system of fundamental characters of $G$,
which also proves the isomorphism $G{\sslash} G   \cong \CC^r$.

When $G=\SLn$, then $r=N-1$ and
$(\sigma_1,\ldots,\sigma_r)$ are the coefficients of the characteristic polynomial.

\begin{Proposition}
\label{prop:Steinberg}
When $G$ is simply connected, then
 $$
 \nu= \pm C\, d\,\sigma_1\wedge\cdots \wedge d\,\sigma_r,
 $$
 for some constant $C\in\CC^*$ depending on $G$ and $\B$. 
 In addition, for $G=\SLn$ and  
$\mathcal{B}(X,Y)=- \operatorname{tr}(XY)$
for $X,Y\in \sln$,
$$C=\pm (-1)^{(N-1)(N-2)/4}\sqrt{N}.$$
 \end{Proposition}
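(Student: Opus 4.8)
The plan is to realize both $\nu$ and $d\sigma_1\wedge\cdots\wedge d\sigma_r$ explicitly after pulling back along the maximal torus, and then to read off their ratio. Since $\R^{reg}(S^1,G)\cong\CC^r$ is a complex manifold of dimension $r$, the holomorphic $r$-forms at each point form a line, so $\nu=f\cdot d\sigma_1\wedge\cdots\wedge d\sigma_r$ for a single holomorphic function $f$; the entire content of the first assertion is that $f$ is a nonzero constant. I would fix a maximal torus $T\subset G$ with Lie algebra $\mathfrak t$ and root system $\Delta$, use the \'etale map $T^{reg}\to G^{reg}/G$, and compute everything in the coordinate $H\in\mathfrak t$ with $A=\rho(\gamma)=\exp(H)$, $\gamma$ the generator of $\pi_1(S^1)$. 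Throughout I fix a reference basis of $\mathfrak t$ and write $[\mathbf w]$ for the determinant of a basis $\mathbf w$ of $\mathfrak t$ expressed in it.

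First I would compute the pullback of $\nu$. For $S^1$ the cellular cochain complex with coefficients in $\Ad\rho$ is the two-term complex $\mathfrak g\xrightarrow{\Ad(A)-I}\mathfrak g$, so for regular $A$ one has $H^0(S^1,\Ad\rho)=\ker(\Ad(A)-I)=\mathfrak t$ and $H^1(S^1,\Ad\rho)=\operatorname{coker}(\Ad(A)-I)\cong\mathfrak t$, while $\Ad(A)-I$ is invertible on the sum of root spaces $\mathfrak m=\bigoplus_{\alpha\in\Delta}\mathfrak g_\alpha$. Splitting off $\mathfrak m$, the sign-refined torsion reduces to the determinant on $\mathfrak m$ twisted by the change of homology bases,
$$
\TOR(S^1,\Ad\rho,\mathfrak o,\mathbf u,\mathbf v)=\pm\det\bigl(\Ad(A)-I\big|_{\mathfrak m}\bigr)\,\tfrac{[\mathbf v]}{[\mathbf u]},\qquad \det\bigl(\Ad(A)-I\big|_{\mathfrak m}\bigr)=\prod_{\alpha\in\Delta}\bigl(e^{\alpha(H)}-1\bigr).
$$
Since the duality pairing on $S^1$ is the cup product followed by $\B$ on coefficients, and both $H^0$ and $H^1$ are identified with $\mathfrak t$, I get $\langle\wedge\mathbf v,\wedge\mathbf u\rangle=\pm\,[\mathbf v]\,[\mathbf u]\,\det(\B|_{\mathfrak t})$. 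The factors $[\mathbf u]$ cancel — this is exactly Lemma~\ref{lemma:independent} — leaving
$$
\nu(\wedge\mathbf v)^2=\pm\,[\mathbf v]^2\,\det(\B|_{\mathfrak t})\prod_{\alpha\in\Delta}\bigl(e^{\alpha(H)}-1\bigr).
$$

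Now I would invoke the factorization $\prod_{\alpha\in\Delta}(e^{\alpha(H)}-1)=(-1)^{|\Delta^+|}D(H)^2$, where $D(H)=\prod_{\alpha>0}(e^{\alpha(H)/2}-e^{-\alpha(H)/2})$ is the Weyl denominator and $\Delta^+$ the positive roots. Taking the square root, $\nu$ equals $\pm\sqrt{(-1)^{|\Delta^+|}\det(\B|_{\mathfrak t})}\,D(H)$ times the coordinate volume form dual to the chosen basis of $\mathfrak t\cong H^1$. On the other hand, under the identification of $H^1(S^1,\Ad\rho)$ with $T_{[\rho]}\R^{reg}(S^1,G)$ a class $\xi\in\mathfrak t$ is the deformation $\tfrac{d}{dt}\exp(H+t\xi)$, so $d\sigma_i$ pairs with it by $d\sigma_i(\xi)$ and $d\sigma_1\wedge\cdots\wedge d\sigma_r$ is $\det(\partial\sigma_i/\partial H_j)$ times the same coordinate form. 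The classical fact I would use is that the Jacobian of the fundamental characters along $T$ is proportional to the Weyl denominator, $\det(\partial\sigma_i/\partial H_j)=c\,D(H)$ with $c\in\CC^*$; the two copies of $D(H)$ then cancel and $f=\pm c^{-1}\sqrt{(-1)^{|\Delta^+|}\det(\B|_{\mathfrak t})}$ is constant, which proves the first assertion.

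Finally, for $G=\SLn$ with $\B(X,Y)=-\operatorname{tr}(XY)$ I would pin down the constant. In the diagonal coordinates $H=\operatorname{diag}(h_1,\dots,h_N)$, $\sum h_i=0$, the basis $e_j=E_{jj}-E_{NN}$ ($1\le j\le N-1$) has Gram matrix $-(I+J)$ with $J$ the all-ones matrix, so $\det(\B|_{\mathfrak t})=(-1)^{N-1}N$; a direct computation shows $\det(\partial\sigma_i/\partial H_j)$ is the Vandermonde $\prod_{i<j}(\lambda_i-\lambda_j)$ in $\lambda_i=e^{h_i}$, which here equals $D(H)$ exactly (because $\prod_i\lambda_i=1$), so $c=\pm1$. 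Since $|\Delta^+|=\binom N2$, substitution gives $C=\pm\sqrt{(-1)^{\binom N2}(-1)^{N-1}N}=\pm i^{\binom N2+N-1}\sqrt N$; as $\binom N2+N-1$ and $(N-1)(N-2)/2$ differ by the even integer $2(N-1)$, one has $i^{\binom N2+N-1}=\pm i^{(N-1)(N-2)/2}=\pm(-1)^{(N-1)(N-2)/4}$, yielding $C=\pm(-1)^{(N-1)(N-2)/4}\sqrt N$. I expect the main obstacle to be not any of these computations but the bookkeeping of signs and normalizations — Turaev's sign refinement, the orientation $\mathfrak o$, the precise power of $\det(\Ad(A)-I|_{\mathfrak m})$ entering the torsion, and the branch of the square root; as a consistency check, constancy of $f$ already forces that determinant to appear to the first power.
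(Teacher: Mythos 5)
Your proposal is correct and reaches the right constant, but it proves the constancy assertion by a genuinely different route from the paper. The paper (Proposition~\ref{prop:nudsigma}) argues globally and algebraically: Steinberg's section $s\colon\CC^r\to G^{reg}$ makes $p\mapsto\mathfrak{g}^{\Ad s(p)}$ an algebraic vector bundle over $\CC^r$, trivial by Quillen--Suslin, and the two forms $\langle s_*(-),\wedge\mathbf{u}\rangle$ and $\TOR(S^1,\Ad s,\mathfrak{o},\mathbf{u},s_*(-))$ are shown to be algebraic and nowhere vanishing on all of $\CC^r$, hence constant multiples of the coordinate form. You instead work on the maximal torus and cancel the Weyl denominator $D(H)$ coming from $\prod_{\alpha\in\Delta}(e^{\alpha(H)}-1)=(-1)^{|\Delta^+|}D(H)^2$ against the Jacobian $\det(\partial\sigma_i/\partial H_j)=c\,D(H)$; this is more explicit and gives the general constant $\pm c^{-1}\sqrt{(-1)^{|\Delta^+|}\det(\B|_{\mathfrak t})}$ in closed form, and for $\SLn$ it reproduces exactly the paper's computation in Proposition~\ref{prop:thetaSL} and Lemma~\ref{lemmma:wedgewithdui} (same Gram determinant $(-1)^{N-1}N$, same Vandermonde, and your exponent $\binom{N}{2}+N-1$ is the paper's $\epsilon(N)=(N-1)(N+2)/2$, congruent mod $4$ to $(N-1)(N-2)/2$ up to the overall sign). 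Two caveats on what your route leaves to be justified. First, your identity is established only on the regular semisimple locus, a dense open subset of $\R^{reg}(S^1,G)\cong\CC^r$; to propagate constancy of $f$ across the discriminant you need $\nu$ to be holomorphic (or at least continuous) there, which you assume at the outset but which is not immediate from the pointwise definition~\eqref{eqn:peripheral} --- the paper's bundle-trivialization argument is precisely what produces a regularly varying basis $\mathbf{u}$ of $H^0$ and hence this regularity. Second, the fact that the Jacobian of the fundamental characters along the torus is a constant multiple of the Weyl denominator is a genuine classical input (the multiplicative analogue of the Jacobian of basic invariants being the product of reflecting linear forms); you verify it only for $\SLn$, which suffices for the numerical claim but should be cited for the general statement. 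Neither point affects the correctness of the final answer.
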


Let $\rho_0\in \Rg(S,G) $   be \emph{$\partial$-regular}, i.e.\ the image of each peripheral curve is a regular element of $G$
(Definitions~\ref{Definition:regular} and \ref{Definition:partialregular}).  
We have an exact sequence (Corollary~\ref{coro:Steinberg}):
$$
0\to T_{[\rho]}\R^*(S,\partial S,G)_{\rho_0} \to T_{[\rho]}\R^*(S,G) \to \bigoplus_{i=1}^b T_{[\rho(\partial_i)]} \R^ {reg}(\partial_i, G)
\to 0,
$$
where $\partial S=\partial_1\sqcup\cdots\sqcup \partial_\bS$ denote the boundary components of $S$.
For a $\partial$-regular  representation $\rho\colon\pi_1(S)\to G$ we let  $\nu_i$ denote the 
 form corresponding to the restriction $\rho|_{\pi_1(\partial_i)}\colon \pi_1(\partial_i)\to G$
 as in \eqref{eqn:peripheral0}  on $\partial_i\cong S^1$.
 Set $\dG=\dim G$, $r=\operatorname{rank} G$, and $\bS>0$ be the number of components of $\partial S$.
The following generalizes  Theorem~\ref{Theorem:Witten} to surfaces with boundary, \cite{Witten} see also
\cite[Theorem~5.40]{BL}.

 \begin{Theorem} 
 \label{Thm:vol}
 Let $\rho_0\in R(S,G)$ be a good, $\partial$-regular representation.
Then on  $T_{[\rho_0]} \Rg(S,G)$ we have:
  $$
  \Omega_{\pi_1( S)}=\pm
  \frac{\omega^n}{n!}\wedge \nu_1\wedge\cdots\wedge\nu_\bS,
  $$
  where $n = \frac{1}{2} \dim \R^*(S,\partial S,G)_{\rho_0} = \frac12(-\chi(S)\, \dG - b\, r)$.
\end{Theorem}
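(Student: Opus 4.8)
The plan is to derive the identity from the multiplicativity of sign-refined Reidemeister torsion applied to the pair $(S,\partial S)$, matching each resulting factor against one of the three forms on the right-hand side. Geometrically this reflects the fact that, near a good $\partial$-regular $\rho_0$, the restriction-to-boundary map
$$
\Rg(S,G)\longrightarrow \bigoplus_{i=1}^{b}\R^{reg}(\partial_i,G)\cong\CC^{br}
$$
is a submersion whose fibres are the relative varieties $\Rg(S,\partial S,G)_{\rho_0}$. The exact sequence of Corollary~\ref{coro:Steinberg} is exactly the sequence of tangent spaces of this fibration, and it is the degree-one part of the long exact cohomology sequence of $(S,\partial S)$ with coefficients in $\Ad\rho_0$. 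Thus $\Omega_{\pi_1(S)}$, a top form on the total tangent space $H^1(S;\Ad\rho_0)$, should factor as a fibre volume times a base volume, and the content of the theorem is that these are $\omega^n/n!$ and $\nu_1\wedge\cdots\wedge\nu_b$ respectively.

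To make this precise I would first recall that $\Omega_{\pi_1(S)}$ is a square root of the sign-refined torsion $\TOR(S,\Ad\rho_0,\mathfrak o)$, and apply Milnor's multiplicativity theorem to the short exact sequence of cochain complexes
$$
0\to C^*(S,\partial S;\Ad\rho_0)\to C^*(S;\Ad\rho_0)\to C^*(\partial S;\Ad\rho_0)\to 0 .
$$
Since $H^0(S)=H^2(S)=0$ and, by Poincar\'e--Lefschetz duality with respect to $\B$, also $H^0(S,\partial S)=H^2(S,\partial S)=0$, the associated long exact sequence collapses to
$$
0\to H^0(\partial S)\xrightarrow{\ \delta\ } H^1(S,\partial S)\to H^1(S)\xrightarrow{\ \mathrm{res}\ } H^1(\partial S)\to 0 .
$$
Multiplicativity then yields
$$
\TOR(S)=\pm\,\TOR(S,\partial S)\cdot\TOR(\partial S)\cdot\TOR(\mathcal H),
$$
where $\TOR(\mathcal H)$ is the torsion of this four-term acyclic sequence in the chosen cohomology bases, and $\TOR(\partial S)=\prod_{i=1}^{b}\TOR(\partial_i)$ because $\partial S$ is a disjoint union of circles.

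The two identifications of factors are then carried out separately. For the fibre I would prove the relative analogue of Witten's Theorem~\ref{Theorem:Witten}: the cup product composed with $\B$ gives the nondegenerate Lefschetz pairing $H^1(S,\partial S;\Ad\rho_0)\times H^1(S;\Ad\rho_0)\to H^2(S,\partial S;\Ad\rho_0)\cong\CC$, and the pairing it induces on $\ker(\mathrm{res})=T_{[\rho_0]}\Rg(S,\partial S,G)_{\rho_0}$ by lifting along $H^1(S,\partial S)\twoheadrightarrow\ker(\mathrm{res})$ is the Atiyah--Bott--Goldman form $\omega$; the torsion of a complex carrying such a duality is the volume of the pairing, i.e.\ $\omega^n/n!$, by the very computation Witten uses in the closed case. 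For the base I would feed each $\TOR(\partial_i)$ together with the duality pairing $\langle\,\cdot\,,\cdot\,\rangle\colon H^1(\partial_i)\times H^0(\partial_i)\to\CC$ extracted from $\TOR(\mathcal H)$ into the defining formula~\eqref{eqn:peripheral0}, so that the product $\TOR(\partial_i)\,\langle\wedge\mathbf v,\wedge\mathbf u\rangle$ reassembles exactly into $\nu_i^2$. Taking the overall square root, with $\mathfrak o$ fixing the signs, gives the claimed equality up to sign.

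The main obstacle is the relative Witten identification together with the square-root and sign bookkeeping. Concretely, one must check that the connecting map $\delta$ carries a basis of $H^0(\partial S)$ into $H^1(S,\partial S)$ compatibly with the duality defining $\nu$, so that the pairing factors hidden in $\TOR(\mathcal H)$ are precisely those of~\eqref{eqn:peripheral0} and no spurious determinant survives; and, since $\Omega_{\pi_1(S)}$ and each $\nu_i$ are themselves square roots of torsions, that the signs governed by $\mathfrak o$ and Turaev's conventions agree on both sides. The nondegeneracy of $\omega$ on $\ker(\mathrm{res})$---equivalently, that $\ker(\mathrm{res})$ is a symplectic rather than merely presymplectic subspace---is what makes the fibre volume $\omega^n/n!$ well defined and is the geometric crux of the argument.
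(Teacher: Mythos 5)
Your overall architecture (long exact sequence of the pair, multiplicativity of torsion, peripheral forms from the circle factors, Pfaffian identity for $\omega$) is the right one and matches the paper's, but two of your steps, as described, would fail. First, $\Omega_{\pi_1(S)}$ is \emph{not} a square root of the torsion: by definition $\Omega_S(\wedge\mathbf h)=\pm\tor(S,\Ad\rho,\mathbf h)$. Consequently your plan --- identify $\TOR(S,\partial S)$ with $\omega^n/n!$, reassemble each $\TOR(\partial_i)\,\langle\wedge\mathbf v,\wedge\mathbf u\rangle$ into $\nu_i^2$, and then ``take the overall square root'' --- does not parse: multiplicativity alone gives $\tor(S)=\pm\tor(S,\partial S)\prod_i\tor(\partial_i)$ with nothing squared on the left, so you cannot take a square root of only the peripheral factors; moreover the relative torsion is a top form on $H^1(S,\partial S;\Ad\rho)$, whose dimension is $2n+b\,r$ rather than $2n$, so it cannot be $\omega^n/n!$ on the nose. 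The missing ingredient is the Franz--Milnor--Witten duality formula (Proposition~\ref{Proposition:duality}), $\tor(S,\Ad\rho,\mathbf h)\,\tor(S,\partial S,\Ad\rho,\mathbf h')=\pm\langle\wedge\mathbf h,\wedge\mathbf h'\rangle$, used as a second, independent relation: combining it with the multiplicativity \eqref{eqn:torsLEX} eliminates $\tor(S,\partial S)$ and yields
$\tor(S,\Ad\rho,\mathbf h\sqcup\widetilde{\mathbf v})^2=\pm\,\omega(\wedge\mathbf h,\wedge\mathbf h)\,\TOR(\partial S,\Ad\rho,\mathbf u,\mathbf v)\,\langle\wedge\mathbf u,\wedge\mathbf v\rangle$,
after which every factor on the right is a perfect square (by \eqref{eqn:Pfaffian} for $\omega$ and by Definition~\ref{Def:peripheralform} for the $\nu_i$) and the square root is legitimate.

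Second, even though the statement carries a $\pm$, the square-root step requires knowing that the sign in the squared identity is $+$: if it were $-$, one would pick up a factor of $\sqrt{-1}$, not merely a sign ambiguity. You flag the sign bookkeeping as the main obstacle but give no mechanism for resolving it; the paper does so by restricting to the compact real form, where Lemma~\ref{lemma:positive} makes every factor positive, and then extending to all of $\Rg(S,G)$ by a connectedness and Zariski-density argument. That argument, or a substitute for it, is genuinely needed to complete the proof.
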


Notice that we write $\Omega_{\pi_1( S)}$ instead of $\Omega_{ S}$, as the simple homotopy type of $S$ only depends on $\pi_1(S)$.
Following Witten \cite{Witten} in the closed case,  
the proof of Theorem~\ref{Thm:vol} is based on Franz-Milnor duality for Reidemeister torsion.

The formula of Theorem~\ref{Thm:vol} is homogeneous in the bilinear form $\B\colon\mathfrak{g}\times\mathfrak{g}\to\CC $: if $\B$ is replaced by $\lambda^2\B$ for some $\lambda\in\CC^* $, then
$\omega$ is replaced by $\lambda^2\omega $, $\nu_i$ by $\lambda^\rG\, \nu_i$ and 
$ \Omega_{\pi_1( S)}$ by $ \lambda^{ 2\, n+ b\, r }\, \Omega_{\pi_1(S)}$, as $ { 2\, n+ b\, r }=-\chi(S)d=\dim \R(S,G) $. 
%
%
%

 \medskip
 
We focus now on $G=\SLn$, which is simply connected and has rank $r=N-1$. We fix a bilinear form on the Lie algebra: 
 
%

\begin{Convention}
Along this paper, when $G=\SLn$ we always assume $\mathcal{B}(X,Y)=- \operatorname{tr}(XY)$
for $X,Y\in \sln$.
\end{Convention}

We compute explicit volume forms for spaces of representations of free groups in $\SL2$ and $\SLL_3(\CC)$. We start
with a pair of pants $S_{0,3}$.
The fundamental group $\pi_1(S_{0,3})\cong F_2$ is free on two generators $\gamma_1$ and $\gamma_ 2$. By Fricke-Klein theorem, 
$X(F_2,\SL2)\cong\mathbb C^3$ and the  coordinates are precisely the traces of the peripheral elements $\gamma_1$, $\gamma_2$,  and $\gamma_1\gamma_2$,
denoted by 
$t_1$, $t_2$, and $t_{12}$ respectively. 
In this case the relative character variety 
is just a point, and the symplectic form is trivial. Thus, by applying Theorem~\ref{Thm:vol} and  equality
$
\nu=\pm\sqrt 2 \,  d \operatorname{tr}_\gamma
$ (Proposition~\ref{prop:Steinberg}), we have
$$
\Omega_{F_ 2} =\Omega_{\pi_1(S_{0,3})}=\pm 2\sqrt 2 \,  d\, t_1\wedge d\, t_2\wedge d\, t_{12},
$$
on $\R^*(F_2,\SL2)$.

By~\cite{AcunaMontesinos}, for $k\geq 3$, the $3k-3$ trace functions
$t_1,t_2,t_{12}, t_3,t_{13}, t_{23}, \ldots, t_{k},t_{1k}, t_{2k}$
 define a local parameterization
 \[
T\colon \R^*(F_k,\SL2) \setminus  \operatorname{crit}(T)
\to \CC^{3k-3},
\] 
where $\operatorname{crit}(T) =\bigcup_{i\geq 3}\{ t_{12i}=t_{21i} \}\cup\{t_{12\bar1\bar2}=2\}$.
Here, $t_{i_1\cdots i_l}\colon \R^*(F_k,\SL2)\to\CC$ stands for the trace function 
$ \operatorname{tr}_\gamma$ if $\gamma = \gamma_{i_1}\cdots\gamma_{i_l}$
with the convention $\gamma_{\,\bar i}=\gamma_i^{-1}$.

\begin{Theorem}
\label{thm:F3SL2}
The holomorphic volume form on 
$
\R^*(F_k,\SL2) \setminus \operatorname{crit}(T)
$
is $ \Omega_{F_k}^{\SL2}= \pm T^*\Omega$, where
$$
 \Omega= \pm 2\sqrt{2}\,  d\, t_1\wedge d\, t_2\wedge d\, t_{12} \, \bigwedge_{i=3}^k \sqrt{2} \frac{d\, t_i \wedge d\, t_{1i}\wedge d\, t_{2i} }{t_{12i}-t_{21i}}  .
$$
\end{Theorem}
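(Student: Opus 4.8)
The plan is to induct on the rank $k$, removing one generator at a time, which on the classifying space $\bigvee^{k}S^{1}$ deletes one circle from the wedge. The base case $k=2$ is the pair-of-pants identity $\Omega_{F_{2}}=\pm 2\sqrt 2\,dt_{1}\wedge dt_{2}\wedge dt_{12}$ established above, valid away from $\{t_{12\bar1\bar2}=2\}$, where $(t_{1},t_{2},t_{12})$ stops parametrising $\R^{*}(F_{2},\SL2)$. For $k\ge 3$, forgetting the last generator gives a map $p\colon\R^{*}(F_{k},\SL2)\to\R^{*}(F_{k-1},\SL2)$ whose fibre over $[\rho]$ consists of the values of $C:=\rho(\gamma_{k})\in\SL2$; on it the functions $(t_{k},t_{1k},t_{2k})=(\operatorname{tr}C,\operatorname{tr}AC,\operatorname{tr}BC)$, with $A=\rho(\gamma_{1})$ and $B=\rho(\gamma_{2})$, are coordinates away from $\operatorname{crit}(T)$.

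Attaching a free generator is wedging on a circle, so the relative twisted complex $C^{*}(\bigvee^{k}S^{1},\bigvee^{k-1}S^{1};\Ad\rho)$ is a single copy of $\sl2$ in degree one with zero differential. For good $\rho$ the cohomology long exact sequence collapses to $0\to\sl2\to H^{1}(F_{k};\Ad\rho)\to H^{1}(F_{k-1};\Ad\rho)\to 0$, in which the last map is the differential of $p$ and the kernel is the fibre direction $\sl2$. Milnor multiplicativity of Turaev's torsion across this sequence then yields
$$
\Omega_{F_{k}}=\pm\,p^{*}\Omega_{F_{k-1}}\wedge\eta_{k},
$$
where $\eta_{k}$ is the torsion of the relative complex, namely the $\B$-volume form $\operatorname{vol}_{\B}$ on the fibre $\sl2$. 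Iterating from $k=2$ produces the asserted product once $\eta_{k}$ is identified.

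The crux is to write $\eta_{k}=\operatorname{vol}_{\B}$ in the coordinates $(t_{k},t_{1k},t_{2k})$. On the fibre $\delta C=CX$ with $X\in\sl2$, so $dt_{k},dt_{1k},dt_{2k}$ are the one-forms $\operatorname{tr}(C_{0}\cdot),\operatorname{tr}((AC)_{0}\cdot),\operatorname{tr}((BC)_{0}\cdot)$, where $(\,\cdot\,)_{0}$ denotes the trace-free part; under $\B(X,Y)=-\operatorname{tr}(XY)$ these are $\B$-dual to $-C_{0},-(AC)_{0},-(BC)_{0}$. With $f_{123}=\B(X_{3},[X_{1},X_{2}])$ in a $\B$-orthonormal basis one has $\B(w,[u,v])=f_{123}\det_{\B}(u,v,w)$ by total antisymmetry, and $f_{123}=\pm\sqrt2$ for $\sl2$. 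Hence $dt_{k}\wedge dt_{1k}\wedge dt_{2k}=\pm f_{123}^{-1}\,\B\big((BC)_{0},[C_{0},(AC)_{0}]\big)\operatorname{vol}_{\B}$, and the Cayley--Hamilton reduction $C^{2}=t_{k}C-I$ gives the identity $\operatorname{tr}\big((BC)_{0}[C_{0},(AC)_{0}]\big)=\operatorname{tr}(ABC)-\operatorname{tr}(BAC)=t_{12k}-t_{21k}$. Therefore $dt_{k}\wedge dt_{1k}\wedge dt_{2k}=\pm\tfrac{1}{\sqrt2}(t_{12k}-t_{21k})\operatorname{vol}_{\B}$, so $\eta_{k}=\pm\sqrt2\,(t_{12k}-t_{21k})^{-1}dt_{k}\wedge dt_{1k}\wedge dt_{2k}$, exactly the $i=k$ factor. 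The per-step constant $\sqrt2=\sqrt N$ and the accumulated $2\sqrt2\,(\sqrt2)^{k-2}$ agree with Proposition~\ref{prop:Steinberg}, and the fibre Jacobian vanishes precisely on $\{t_{12k}=t_{21k}\}$, accounting for that component of $\operatorname{crit}(T)$.

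I expect the algebra to be routine and the main obstacle to lie in the normalisation and signs of the multiplicativity step: one must verify that the relative torsion is exactly $\operatorname{vol}_{\B}$ on the fibre, with no stray constant, and that the homology orientation $\mathfrak o$ and the duality pairing $\langle\cdot,\cdot\rangle$ entering the definition of $\Omega$ propagate through $0\to\sl2\to H^{1}(F_{k})\to H^{1}(F_{k-1})\to 0$ so that at each stage only the overall sign $\pm$ is affected. Granting this normalisation together with the trace identity above, the theorem reduces to the base case and the single fibre computation.
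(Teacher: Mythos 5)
Your proposal is correct, but it follows a genuinely different route from the paper. The paper's induction glues the graph for $F_k$ out of a rank-$(k-1)$ piece and a rank-$3$ piece $\langle\gamma_1,\gamma_2,\gamma_k\rangle$ along the rank-$2$ subgraph $\langle\gamma_1,\gamma_2\rangle$ (whose twisted $H^0$ vanishes since $t_{12\bar1\bar2}\neq 2$), and feeds the Mayer--Vietoris sequence into the product formula for torsion; the per-step factor $\sqrt2\,(t_{12i}-t_{21i})^{-1}dt_i\wedge dt_{1i}\wedge dt_{2i}$ is then imported wholesale as the quotient $\Omega_{F_3}/\Omega_{F_2}$ from Proposition~\ref{Prop:volFn}, which itself rests on the surface $S_{0,4}$, Theorem~\ref{Thm:vol}, the peripheral forms, and Goldman's Poisson bracket (Proposition~\ref{Prop:symplectic40}). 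You instead peel off one circle at a time: the relative complex of $(\bigvee^k S^1,\bigvee^{k-1}S^1)$ is a single copy of $\sl2$ in degree one with zero differential, so its torsion is the $\B$-volume on the fibre, and you convert $\operatorname{vol}_{\B}$ into the trace coordinates $(t_k,t_{1k},t_{2k})$ by the structure-constant identity $f_{123}=\pm\sqrt2$ and the Cayley--Hamilton computation $\operatorname{tr}\bigl((BC)_0[C_0,(AC)_0]\bigr)=t_{12k}-t_{21k}$ (both of which check out, in either convention $\delta C=CX$ or $\delta C=XC$). Your argument is more elementary and self-contained --- it bypasses the symplectic form and Goldman's formula entirely and recovers Proposition~\ref{Prop:volFn} as the case $k=3$ rather than assuming it --- while the paper's version reuses machinery already built and keeps the volume form visibly tied to the decomposition $\frac{\omega^n}{n!}\wedge\nu_1\wedge\cdots\wedge\nu_b$. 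The normalisation you flag as the main risk is indeed fine: the relative cochain group carries the geometric ($\B$-orthonormal) basis, so the relative torsion is exactly $\operatorname{vol}_{\B}$ with no stray constant, and the exclusion $t_{12\bar1\bar2}\neq2$ enters, as you use it, to guarantee $H^0(F_{k-1};\Ad\rho)=0$ so the long exact sequence collapses (your remark that the Fricke coordinates ``stop parametrising'' there is slightly off --- the issue at $t_{12\bar1\bar2}=2$ is reducibility of the restriction, not failure of the coordinates --- but nothing in the argument depends on that phrasing).
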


Next we deal with $\SLL_3(\CC)$.
To avoid confusion with $\SL2$, the trace functions in $\SLL_3(\CC)$ are denoted
by $\uptau_{i_1\cdots i_k}$;
notice that $\uptau_{\bar i}\neq \uptau_i$.
Lawton obtains in \cite{LawtonJA} an explicit description of the variety of characters $X(F_2,\SLL_3(\CC))$. It follows from his result that
$$
\mathcal{T} :=
(\uptau_1, \uptau_{\bar 1},\uptau_2, \uptau_{\bar 2},\uptau_{12}, \uptau_{\bar 1\bar 2}, \uptau_{1\bar 2}, \uptau_{\bar 1 2})\colon
\R^*(F_2,\SLL_3(\CC)) \setminus \{ \uptau_{12\bar 1\bar 2}= \uptau_{21\bar 2\bar 1} \}\to \CC^8
$$
defines a local parameterization. Using the computation of the symplectic form in \cite{LawtonP},
we   prove in Proposition~\ref{Prop:volF2SL3} that, 
on $\R^*(F_2,\SLL_3(\CC) ) \setminus \{   \uptau_{21\bar2\bar1}=  \uptau_{12\bar1\bar2}\}$ the volume form is $\Omega_{F_2}^{\SLL_3(\CC)}= \mathcal{T}^*\Omega$, where
 $$
\Omega=\pm \frac{3\sqrt{-3}}{  \uptau_{21\bar2\bar1}-\uptau_{12\bar1\bar2}}
d\, \uptau_1\wedge d\, \uptau_{\bar 1} \wedge d\, \uptau_2\wedge d\, \uptau_{\bar 2}
\wedge d\, \uptau_{12}\wedge d\, \uptau_{\bar 1\bar 2}\wedge d\, \uptau_{1\bar 2}\wedge d\, \uptau_{\bar 1 2}.
$$ 

This is generalized to a free group of arbitrary rank. First we start with the generic local parameters:
\begin{Proposition}
\label{prop:FkSL3}
For $k\geq 3 $, the $8k-8$ trace functions
\begin{multline*}
  \mathcal T= (\uptau_1, \uptau_{\bar 1},\uptau_2, \uptau_{\bar 2},\ldots,  \uptau_k, \uptau_{\bar k}, \uptau_{12}, \uptau_{\bar 1\bar 2}, \uptau_{13}, \uptau_{\bar 1\bar 3},\ldots , \uptau_{1k}, \uptau_{\bar 1\bar k},
     \uptau_{23}, \uptau_{\bar 2\bar 3},\ldots , \uptau_{2k}, \uptau_{\bar 2\bar k},
  \\
  \uptau_{1\bar 2}, \uptau_{\bar 1 2},  \uptau_{1\bar 3}, \uptau_{\bar 1 3}, \ldots, \uptau_{1\bar k}, \uptau_{\bar 1 k} ) 
\end{multline*}
define a local parameterization 
$
\mathcal{T}\colon  \R^*(F_k,\SLL_3(\CC)) \setminus \operatorname{crit}(\mathcal T)
 \to \CC^{8k-8}, 
$
with
\begin{align*}
  \operatorname{crit}(\mathcal T)&=\bigcup\limits_{i\geq 2}\{ \uptau_{1i\bar 1\bar i}= \uptau_{i1\bar i\bar 1}  \} \cup  \bigcup\limits_{i\geq 3}\{  \varDelta^1_{2i} =0\}, \\
  \varDelta^1_{2i}  &=  (\uptau_{12i}-\uptau_{1i2}) (\uptau_{\bar 1\bar 2\bar i}-\uptau_{\bar 1\bar i\bar 2}) - 
(\uptau_{1\bar 2\bar i}-\uptau_{1\bar i\bar 2}) (\uptau_{\bar 1 2  i}-\uptau_{\bar 1 i 2 }).
\end{align*}

%
%
\end{Proposition}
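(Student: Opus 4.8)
The plan is to apply the inverse function theorem. Since $\dim \R^*(F_k,\SLL_3(\CC))=(k-1)\dim\SLL_3(\CC)=8k-8$ matches the dimension of the target $\CC^{8k-8}$, the map $\mathcal T$ is a local biholomorphism at $[\rho]$ exactly when its Jacobian is nonsingular there, i.e.\ when the differentials of the $8k-8$ trace functions are linearly independent in $T^*_{[\rho]}\R^*(F_k,\SLL_3(\CC))\cong H^1(F_k,\Ad\rho)^*$. I would prove nonvanishing of the Jacobian by induction on $k$, using the retraction $r\colon F_k\to F_{k-1}$ that kills $\gamma_k$. Writing a cocycle as $(X_1,\dots,X_k)\in\sll_3(\CC)^k$ (the group is free, so $Z^1=\sll_3(\CC)^k$), the image $r^*H^1(F_{k-1},\Ad\rho)$ together with the eight classes of $(0,\dots,0,X_k)$ splits $H^1(F_k,\Ad\rho)$; the latter inject because $\rho|_{F_{k-1}}$ is good, so its centralizer is central. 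The functions of $\mathcal T_{k-1}$ do not involve $\gamma_k$ and hence have vanishing differential on the $X_k$-directions, so in this basis the Jacobian is block triangular and
\[
\det J_k=\det J_{k-1}\cdot\det C_k,
\]
where $C_k$ is the $8\times 8$ block of differentials of the eight new functions $\uptau_k,\uptau_{\bar k},\uptau_{1k},\uptau_{\bar 1\bar k},\uptau_{2k},\uptau_{\bar 2\bar k},\uptau_{1\bar k},\uptau_{\bar 1 k}$ restricted to the $X_k$-directions.

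The base case $k=2$ is Proposition~\ref{Prop:volF2SL3}, whose critical locus is $\{\uptau_{12\bar 1\bar 2}=\uptau_{21\bar 2\bar 1}\}$. For the inductive step it therefore suffices to show that $\det C_k$ vanishes exactly on $\{\uptau_{1k\bar 1\bar k}=\uptau_{k1\bar k\bar 1}\}\cup\{\varDelta^1_{2k}=0\}$; then $\operatorname{crit}(\mathcal T_k)=\operatorname{crit}(\mathcal T_{k-1})\cup\{\det C_k=0\}$ reproduces the stated set. To compute $C_k$ I use the Fox-calculus formula $d\uptau_w(z)=\operatorname{tr}(z(w)\rho(w))$ for the differential of a trace function. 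Setting $B=\rho(\gamma_k)$ and $g_i=\rho(\gamma_i)$ and varying only $X_k$, each of the eight differentials becomes a linear functional $X\mapsto\operatorname{tr}(X M_\ell)$ on $\sll_3(\CC)$, where the eight matrices $M_\ell$ are, up to sign, $B,\ B^{-1},\ Bg_1,\ (Bg_1)^{-1},\ Bg_2,\ (Bg_2)^{-1},\ g_1B^{-1},\ Bg_1^{-1}$. Hence $C_k$ is nonsingular iff the traceless parts $(M_\ell)_0$ form a basis of $\sll_3(\CC)$, equivalently iff the $8\times 8$ Gram matrix with entries $\operatorname{tr}(M_\ell M_m)-\tfrac13\operatorname{tr}(M_\ell)\operatorname{tr}(M_m)$ is nonsingular. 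Every entry is a polynomial in the coordinate trace functions, so $\det C_k$ vanishes exactly where this Gram determinant does, and the latter is a polynomial in the $\uptau$'s whose zero locus we must identify.

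The geometric content is that the eight matrices, together with $I$, should span $M_3(\CC)$. Grouping them by Cayley--Hamilton (in $\SLL_3(\CC)$ one has $M^{-1}\equiv M^2$ modulo $\langle I,M\rangle$) into the commutative subalgebras $\CC[B]$, $\CC[Bg_1]$, $\CC[g_1B^{-1}]$ and $\CC[Bg_2]$, each containing $I$, the spanning property decouples into two conditions. First, $\CC[B]+\CC[Bg_1]+\CC[g_1B^{-1}]$ must be $7$-dimensional; this is a condition on the pair $(\gamma_1,\gamma_k)$ alone, which I expect to be $\uptau_{1k\bar 1\bar k}\neq\uptau_{k1\bar k\bar 1}$ (note $\operatorname{tr}\rho(\gamma_k\gamma_1\gamma_k^{-1}\gamma_1^{-1})=\operatorname{tr}\rho(\gamma_1\gamma_k\gamma_1^{-1}\gamma_k^{-1})^{-1}$, so it reads $\operatorname{tr}(g_1Bg_1^{-1}B^{-1})\neq\operatorname{tr}(Bg_1B^{-1}g_1^{-1})$, matching the base case under $\gamma_2\mapsto\gamma_k$). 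Second, the remaining subalgebra $\CC[Bg_2]$ must supply the last two dimensions, a $2\times 2$ nondegeneracy condition which I expect to be exactly $\varDelta^1_{2k}\neq 0$; its shape as a $2\times 2$ determinant of the commutator-trace differences $\uptau_{12i}-\uptau_{1i2}$ etc.\ reflects pairing the two vectors $Bg_2,(Bg_2)^{-1}$ against a complement of the first seven-dimensional space. The main obstacle is precisely this last identification: converting the abstract rank conditions into the explicit polynomials $\uptau_{1k\bar 1\bar k}-\uptau_{k1\bar k\bar 1}$ and $\varDelta^1_{2k}$, and verifying by a degree and leading-term analysis that $\det C_k$ carries no spurious extra factor (which in turn forces the non-regular locus of $\rho(\gamma_k)$ into the critical set). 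This is where the $\SLL_3(\CC)$ trace identities of Lawton must be used in earnest; the $\SL2$ analogue of exactly this step is the content of Theorem~\ref{thm:F3SL2}.
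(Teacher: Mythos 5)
Your overall architecture (induction on $k$ via a block-triangular Jacobian, with an $8\times 8$ block $C_k$ of differentials of the new trace functions evaluated on the $X_k$-directions) is a legitimate alternative to the paper's route, and your Fox-calculus identification of the eight functionals $X\mapsto\operatorname{tr}(XM_\ell)$ is correct. But the proof has a genuine gap exactly where you flag it: you never establish that $\det C_k\neq 0$ off the locus $\{\uptau_{1k\bar 1\bar k}=\uptau_{k1\bar k\bar 1}\}\cup\{\varDelta^1_{2k}=0\}$. Everything after ``I expect'' is conjecture: that the $7$-dimensionality of $\CC[B]+\CC[Bg_1]+\CC[g_1B^{-1}]$ is equivalent to $\uptau_{1k\bar 1\bar k}\neq\uptau_{k1\bar k\bar 1}$, and that the transversality of $\CC[Bg_2]$ to that $7$-dimensional space is cut out precisely by $\varDelta^1_{2k}$, are the entire content of the proposition, and neither is verified. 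Moreover the ``decoupling'' is not purely formal: a direct sum of four $2$-planes in $\sll_3(\CC)$ requires both that the first three are in direct sum \emph{and} that the fourth meets their span trivially, and the second condition involves identifying the trace-form annihilator of a $7$-dimensional subspace of $M_3(\CC)$ — an $8\times 8$ Gram determinant you would have to factor explicitly. As written, the argument proves nothing beyond the (correct) reduction to that unproven factorization.

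For comparison, the paper avoids the $8\times 8$ computation entirely. It realizes $F_3=\pi_1(S_{0,4})$, splits $S_{0,4}$ into two pairs of pants $P'$, $P''$ along the curve $\gamma_1$, and invokes Lawton's parameterization of $\R^*(F_2,\SLL_3(\CC))$ twice, which accounts for $14$ of the $16$ coordinates under the hypotheses $\uptau_{1i\bar 1\bar i}\neq\uptau_{i1\bar i\bar 1}$ (these also force regularity of $\rho(\gamma_j)$ and $\rho(\gamma_1\gamma_j)$ via Lemma~\ref{Lemma:regtr}). The Mayer--Vietoris sequence then reduces the problem to a single $2\times 2$ determinant: evaluating $d\,\uptau_{23}\wedge d\,\uptau_{\bar 2\bar 3}$ on the two bending directions $\beta(x),\beta(y)$ associated with the basis $x=A_1-\tfrac{\operatorname{tr}A_1}{3}\operatorname{Id}$, $y=A_1^{-1}-\tfrac{\operatorname{tr}A_1^{-1}}{3}\operatorname{Id}$ of $\sll_3(\CC)^{\Ad_{A_1}}$ (Lemma~\ref{Lemma:reginv}), which an elementary trace computation shows equals $\pm\varDelta^1_{23}$ (Lemma~\ref{lemma:nuDelta}). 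If you want to salvage your approach, the most economical fix is to replace the abstract spanning analysis of the eight matrices by this bending computation: the annihilator of the $14$ differentials already controlled by the inductive hypothesis is exactly the image of $\beta$, and pairing it with $d\,\uptau_{2k}$, $d\,\uptau_{\bar 2\bar k}$ is a two-line calculation.
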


Next we provide the holomorphic volume form:

  \begin{Theorem}
\label{thm:FkSL3}
The volume form  on 
$\R^*(F_k,\SLL_3(\CC))\setminus   \operatorname{crit}(\mathcal T)$ 
is 
$  \Omega_{F_k}^{\SLL_3(\CC)}   =\pm \mathcal{T}^* \Omega$, for
$$
 \Omega =\omega_{12}\wedge\nu_1\wedge\nu_2\wedge\nu_{12}\bigwedge_{i=3}^k\frac{\omega_{1i}\wedge \nu_i\wedge \nu_{1i}\wedge \nu_{2i} }{3 \varDelta^1_{2i}}
$$
where
\[
\nu_i  = \sqrt{-3}\, d\, \uptau_i\wedge d\, \uptau_{\bar i}, \quad
\nu_{\ell i}  = \sqrt{-3}\, d\, \uptau_{\ell i}\wedge d\, \uptau_{\bar\ell \, \bar i}, \quad
\omega_{1i}  =   \frac1{  {  \uptau_{1i\bar 1\bar i}- \uptau_{i1\bar i\bar 1}  }        } {d\, \uptau_{1\bar i}\wedge d\, \uptau_{\bar 1 i} },
\]
and  $ \varDelta^1_{2i}$ is  as in Proposition~\ref{prop:FkSL3}.
\end{Theorem}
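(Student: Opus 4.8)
The plan is to prove the identity by induction on $k$, using Proposition~\ref{Prop:volF2SL3} (the case $k=2$) as the base and exploiting the fact that the claimed form is manifestly multiplicative,
\[
\Omega=\bigl(\omega_{12}\wedge\nu_1\wedge\nu_2\wedge\nu_{12}\bigr)\bigwedge_{i=3}^{k}\frac{\omega_{1i}\wedge\nu_i\wedge\nu_{1i}\wedge\nu_{2i}}{3\,\varDelta^1_{2i}},
\]
so that the $i$-th factor involves only the generators $\gamma_1,\gamma_2,\gamma_i$. I would realize this multiplicativity geometrically through the forgetful map $\pi\colon\R^*(F_k,\SLL_3(\CC))\to\R^*(F_{k-1},\SLL_3(\CC))$ dropping the last generator. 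Over the good locus $\pi$ is a fibration whose fibre is a single copy of $G=\SLL_3(\CC)$ (the choices of $\rho(\gamma_k)$ modulo the centre), and at the level of $\operatorname{Hom}(F_k,G)=G^k$ the volume form induced by $\B$ is just the product of Haar forms. The multiplicativity of Reidemeister torsion for the inclusion $F_{k-1}\hookrightarrow F_k$ then yields a Fubini-type factorization $\Omega_{F_k}=\pm\,\pi^*\Omega_{F_{k-1}}\wedge\mu$, where $\mu$ is the $\B$-induced holomorphic volume form on the fibre $\cong G$.

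The heart of the argument is therefore a single computation on $\SLL_3(\CC)$: with $A=\rho(\gamma_1)$ and $B=\rho(\gamma_2)$ fixed and generic and $C=\rho(\gamma_k)$ the variable, I would show that $\mu$ equals the $k$-th factor when expressed in the eight trace coordinates $\uptau_k,\uptau_{\bar k},\uptau_{1k},\uptau_{\bar1\bar k},\uptau_{2k},\uptau_{\bar2\bar k},\uptau_{1\bar k},\uptau_{\bar1 k}$, i.e.\ $\operatorname{tr}(C),\operatorname{tr}(C^{-1}),\operatorname{tr}(AC),\dots,\operatorname{tr}(A^{-1}C)$. These are exactly the eight functions by which Proposition~\ref{prop:FkSL3} locally parameterizes the fibre, and the numerator $\omega_{1i}\wedge\nu_i\wedge\nu_{1i}\wedge\nu_{2i}$ records their differentials grouped into the Steinberg peripheral forms $\nu_i,\nu_{1i},\nu_{2i}$ of Proposition~\ref{prop:Steinberg} (each of the shape $\sqrt{-3}\,d\uptau\wedge d\uptau_{\bar{}}$, since for $N=3$ the constant of Proposition~\ref{prop:Steinberg} is $\pm\sqrt{-3}$) together with the symplectic factor $\omega_{1i}$ coming from the Atiyah--Bott--Goldman form computed by Lawton and Porti~\cite{LawtonP}. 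The denominator $3\,\varDelta^1_{2i}$ is then forced as the Jacobian of the map $C\mapsto(\uptau_k,\dots,\uptau_{\bar1 k})$ relative to $\mu$; this is consistent with Proposition~\ref{prop:FkSL3}, whose critical set is precisely $\bigcup_i\{\varDelta^1_{2i}=0\}\cup\{\uptau_{1i\bar1\bar i}=\uptau_{i1\bar i\bar1}\}$, where these coordinates degenerate and the form must blow up.

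Granting this lemma the induction is formal: wedging $\pi^*\Omega_{F_{k-1}}$, which by hypothesis involves only the generators of index $<k$, with $\mu$ reproduces the product, and since the $k$-th factor depends only on $\gamma_1,\gamma_2,\gamma_k$ the step from $k-1$ to $k$ is literally the same computation as the one producing the $F_3$ formula. I would therefore carry out the case $k=3$ in full first, checking it directly against Theorem~\ref{Thm:vol} applied to a four-holed sphere, and then invoke the factorization for the general step. The main obstacle is this explicit $\SLL_3(\CC)$ Jacobian computation identifying $\mu$ with $\frac{\omega_{1k}\wedge\nu_k\wedge\nu_{1k}\wedge\nu_{2k}}{3\,\varDelta^1_{2k}}$ with the correct scalar, namely the constant $3$ and the overall sign. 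This requires, first, reconciling the Lawton--Porti symplectic form with the normalisation $\omega_{1k}=\frac1{\uptau_{1k\bar1\bar k}-\uptau_{k1\bar k\bar1}}\,d\uptau_{1\bar k}\wedge d\uptau_{\bar1 k}$, and second, computing the $8\times8$ trace-coordinate Jacobian on the fibre and recognising its determinant as a multiple of $\varDelta^1_{2k}$; the combinatorics of the mixed traces $\operatorname{tr}(ABC),\operatorname{tr}(ACB),\dots$ entering $\varDelta^1_{2k}$ is where the real work lies.
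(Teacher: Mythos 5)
Your overall architecture is sound and close in spirit to the paper's: the Fubini-type factorization $\Omega_{F_k}=\pm\,\pi^*\Omega_{F_{k-1}}\wedge\mu$ along the forgetful map is essentially Lemma~\ref{lemma:volume_orbits} applied twice (equivalently, the torsion product formula for attaching one more edge to the graph), and on the locus $\uptau_{12\bar1\bar2}\neq\uptau_{21\bar2\bar1}$ the restriction $\rho|_{F_{k-1}}$ is indeed good by Lemma~\ref{Lemma:regtr} and Remark~\ref{Remark:good}, so the fibre really is a copy of $G$ carrying the Haar form $\theta_G$. The reduction of the theorem to the single statement ``$\theta_G$ restricted to the fibre equals $\frac{\omega_{1k}\wedge\nu_k\wedge\nu_{1k}\wedge\nu_{2k}}{3\,\varDelta^1_{2k}}$ in the eight trace coordinates of $C=\rho(\gamma_k)$'' is therefore legitimate.

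The genuine gap is that this statement --- which is the entire analytic content of the theorem --- is never proved; you only argue that the denominator ``is then forced as the Jacobian'' and defer the $8\times8$ determinant to future work. Asserting that the Jacobian \emph{must} come out to $\pm 3\varDelta^1_{2k}\bigl(\uptau_{1k\bar1\bar k}-\uptau_{k1\bar k\bar1}\bigr)(\sqrt{-3})^{-3}$ because the coordinates degenerate on $\operatorname{crit}(\mathcal T)$ only constrains the zero locus of the Jacobian, not its value, so nothing short of the computation closes the argument. Moreover the brute-force route you propose (differentiating the eight functions $\operatorname{tr}(WC^{\pm1})$ in $C$ and evaluating an $8\times8$ determinant) is substantially harder than what the paper actually does: the paper never forms this determinant. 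Instead it cuts $S_{0,4}$ into two pairs of pants along $\gamma_1$ and uses Mayer--Vietoris \eqref{eqn:lesp'p''} together with the torsion product formula (Lemma~\ref{lemma:iOmega}) to split the eight fibre directions into the four-dimensional relative deformation space of $P''$ --- where $\omega_{1k}\wedge\nu_k\wedge\nu_{1k}$ is already known from Theorem~\ref{Thm:vol} for $S_{0,3}$ and Lawton's Poisson bracket \eqref{eqn:symplecticSL3} --- plus two bending directions $\beta(\mathbf{u})$ and two dual directions $\widetilde{\mathbf{v}}$, on which the only computations needed are the $2\times2$ determinants of Lemma~\ref{lemma:nuDelta} (producing $\varDelta^1_{2k}$) and of \eqref{eqn:wedgeuv}--\eqref{eqn:nu1} (producing the cancellation $\langle\wedge\mathbf{u},\wedge\mathbf{v}\rangle/\nu_1(\wedge\mathbf{v})=\pm(\sqrt{-3})^{-1}$, whence the factor $3=-(\sqrt{-3})^2$). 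To complete your proof you would either have to carry out the $8\times8$ Jacobian explicitly, or import this intermediate decomposition of the fibre --- at which point you have reproduced the paper's argument.
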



 The paper is organized as follows. In Section~\ref{Section:VR} we review the results on spaces of representations that we need, in particular we 
 describe the relative variety of representations. In Section~\ref{Section:RT} we recall the tools of Reidemeister torsion, including the duality formula,
on which Theorem~\ref{Thm:vol} is based. In Section~\ref{Section:forms} we describe all forms and we prove Theorem~\ref{Thm:vol}.
Section~\ref{Section:SLn} is devoted to formulas for $\SLn$, the form $\nu$ and as well as the volume form for the free groups of rank $2$
in $\SL2$ and $\SLL_3(\CC)$.
In Section~\ref{Section:symplectic} we use Goldman's  formula for the Poisson bracket 
to give the symplectic form  in terms of trace functions for the 
relative varieties of representations of $S_{0,4}$ and $S_{1,1}$ in $\SL2$. Finally, in Section~\ref{section:higherrank}
we compute volume forms on spaces of representations of free groups of higher rank in $\SL2$ and $\SLL_3(\CC)$.

%

\smallskip
\noindent
\textbf{Acknowledgements.} 
We are indebted to
Simon Riche for helpful discussions and  for pointing out Steinberg results to us.

 \section{Varieties of representations}
\label{Section:VR} 

Throughout this article $G$ denotes a simply-connected  semisimple complex linear Lie group.
We let $\dG$ denote the dimension of $G$, and $\rG$ its rank.
Also recall that along this paper $S=S_{g,\bS}$ denotes a compact, oriented, connected surface with nonempty boundary, of genus $g$
and with 
$\bS\geq 1$ boundary components, $\partial S = \partial_1 \sqcup\cdots\sqcup\partial_b$. 
We assume that $\chi(S)=2-2g-\bS<0$.
The fundamental group  of $S$ is a free group $F_\b1$ 
of rank $\b1=1-\chi(S)\geq 2$.

 \subsection{The variety of good representations}
 
The set of representations of $\pi_1(S)\cong F_\b1$ 
in $G$ is 
$$
R(S,G)=\hom(\pi_1(S),G)\cong G^k\, .
$$
It follows from \cite[Chap.4, \S1.2]{OV} that $G$ is algebraic, and hence $R(S,G)$ is an affine algebraic set  (it has a natural 
algebraic structure independent of the choice of the isomorphism
$\pi_1(S)\cong F_k$).

The group $G$ acts on $R(S,G)$ by conjugation and we are interested in the quotient
$$
\R(S,G)=R(S,G)/G\, .
$$
This is not a Hausdorff space, so we need to restrict to representations with some regularity properties.
Following \cite{JohnsonMillson}, we define:

\begin{Definition} A representation $\rho\in R(S,G)$ is \emph{irreducible} if its image is not contained
		in a proper parabolic subgroup of $G$.
\end{Definition}

For $\rho\in R(S,G)$, its centralizer is
$$
Z(\rho)=\{ g\in G \mid g\rho(\gamma)=\rho(\gamma)g,\ \forall\gamma\in\pi_1(S)\}\, .
$$

\begin{Proposition}[Proposition~1.1 of \cite{JohnsonMillson}]
The representation $\rho\in R(S,G)$ is \emph{irreducible} if and only if the orbit 
$\mathcal{O}(\rho)$ is closed in  $R(S,G)$
           and  $Z(\rho)$ is finite.
\end{Proposition}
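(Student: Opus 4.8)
The plan is to prove the equivalence by geometric invariant theory for the conjugation action of the reductive group $G$ on the affine variety $R(S,G)\cong G^k$, exploiting the dictionary between proper parabolic subgroups and nontrivial one-parameter subgroups (1-PS). For a nontrivial 1-PS $\lambda\colon\CC^*\to G$ I would use the associated parabolic $P(\lambda)=\{g\in G\mid \lim_{t\to0}\lambda(t)g\lambda(t)^{-1}\text{ exists}\}$, with Levi factor $L(\lambda)=Z_G(\lambda)$ and unipotent radical $U(\lambda)$. The elementary facts I need are that $g\in P(\lambda)$ exactly when this limit exists, that the limit then lands in $L(\lambda)$, and --- crucially, since $G$ is semisimple and hence has no nontrivial central 1-PS --- that $P(\lambda)$ is a \emph{proper} parabolic whenever $\lambda$ is nontrivial. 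Moreover every proper parabolic containing the image of $\rho$ arises as such a $P(\lambda)$.

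For the forward implication (irreducible $\Rightarrow$ closed orbit and finite centralizer) I would argue contrapositively in two steps. If $\mathcal{O}(\rho)$ is not closed, Kempf's refinement of the Hilbert--Mumford criterion produces a nontrivial 1-PS $\lambda$ for which $\lim_{t\to0}\lambda(t)\cdot\rho$ exists and lies in the boundary; existence of the limit forces $\rho(\gamma)\in P(\lambda)$ for every generator $\gamma$, so the image lies in the proper parabolic $P(\lambda)$ and $\rho$ is reducible. If instead $Z(\rho)$ is infinite, then $Z(\rho)^0$ is a positive-dimensional algebraic group: when it contains a nontrivial torus, a 1-PS $\lambda\subseteq Z(\rho)$ places the image inside $Z_G(\lambda)=L(\lambda)\subseteq P(\lambda)$, again a proper parabolic; when $Z(\rho)^0$ is unipotent, a nontrivial unipotent element $u$ centralizes the image, and I would invoke the Borel--Tits theorem to place $Z_G(u)$, hence the image, in a proper parabolic. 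Either way $\rho$ is reducible.

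For the converse (closed orbit and finite centralizer $\Rightarrow$ irreducible) I suppose $\rho$ is reducible, with image in $P(\lambda)$ for a nontrivial $\lambda$, and set $\rho'=\lim_{t\to0}\lambda(t)\cdot\rho$, whose image lies in $L(\lambda)$. If $\rho'\notin\mathcal{O}(\rho)$ then $\rho'\in\overline{\mathcal{O}(\rho)}\setminus\mathcal{O}(\rho)$ and the orbit is not closed. If $\rho'=g\cdot\rho\in\mathcal{O}(\rho)$, then the image of $\rho$ lies in $g^{-1}Z_G(\lambda)g=Z_G(g^{-1}\lambda g)$, so the nontrivial torus $g^{-1}\lambda(\CC^*)g$ centralizes $\rho$ and $Z(\rho)$ is infinite. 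In both cases one of the two hypotheses fails, giving the desired contrapositive.

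The main obstacle I expect is the bookkeeping around the unipotent radical: verifying that the limit $\rho'$ exists precisely because the image lies in $P(\lambda)$ and that it projects onto the Levi factor $L(\lambda)$, and --- in the centralizer step --- handling a purely unipotent identity component of $Z(\rho)$, where the clean torus argument is unavailable and one must appeal to Borel--Tits (equivalently, to the fact that the centralizer of a nontrivial unipotent element is contained in a proper parabolic). Once these parabolic/1-PS correspondences are in place, the remainder is a routine packaging of standard reductive GIT.
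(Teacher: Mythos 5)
The paper offers no proof of this statement: it is quoted verbatim from Johnson--Millson (their Proposition~1.1) and used as a black box, so there is nothing internal to compare against. Your GIT argument is correct and is essentially the standard proof one finds in that reference --- the Hilbert--Mumford/Kempf criterion plus the $P(\lambda)$ dictionary for the two "only if" cases, and the limit-into-the-Levi argument for the converse --- and you correctly flag and handle the one genuinely delicate point, namely the purely unipotent centralizer case via Borel--Tits (equivalently, in characteristic zero, via Jacobson--Morozov).
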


\begin{Definition}
\label{Definition:good}
A representation $\rho\in R(S,G)$ is
\emph{good} if it is irreducible and its centralizer  
$Z(\rho)$ is the center of the group $G$,
i.e.~$Z(\rho)=Z(G)$. 
\end{Definition}

The set of good representations is denoted by $R^*(S,G)$, and its orbit space by
$\Rg(S,G) = R^*(S,G)/G$. 


\begin{Proposition}[Proposition~1.2 and 1.3 of \cite{JohnsonMillson}] The set of good representations 
  $R^*(S,G)$ is a Zariski open subset of $R(S,G)$.
  Furthermore the action of $G$ on  $R^*(S,G)$ is proper.
\end{Proposition}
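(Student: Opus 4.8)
The plan is to deduce both statements from geometric invariant theory together with a semicontinuity argument for the size of the centralizer. First I would record that, by the preceding Proposition (Proposition~1.1 of \cite{JohnsonMillson}), a representation $\rho$ is irreducible precisely when its orbit $\mathcal{O}(\rho)$ is closed in $R(S,G)$ and $Z(\rho)$ is finite; these are exactly the (properly) \emph{stable} points for the conjugation action of the reductive group $G$ on the affine variety $R(S,G)\cong G^{\b1}$. Writing $R^{\mathrm{irr}}(S,G)$ for this set, standard geometric invariant theory (Mumford, Newstead) gives that $R^{\mathrm{irr}}(S,G)$ is Zariski open and that $G$ acts properly on it with finite stabilizers. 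Since $Z(G)\subseteq Z(\rho)$ always, good representations are those irreducible $\rho$ whose centralizer is as small as possible, namely $Z(\rho)=Z(G)$; thus $R^*(S,G)\subseteq R^{\mathrm{irr}}(S,G)$, and it remains to see that this inclusion is open and that $R^*(S,G)$ is $G$-invariant.

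For the openness I would study the stabilizer group scheme $f\colon W\to R^{\mathrm{irr}}(S,G)$, where
\[
W=\{(\rho,g)\in R^{\mathrm{irr}}(S,G)\times G \mid g\,\rho(\gamma)\,g^{-1}=\rho(\gamma)\ \text{for all}\ \gamma\in\pi_1(S)\},
\]
whose fibre over $\rho$ is the centralizer $Z(\rho)$. Because the action on $R^{\mathrm{irr}}(S,G)$ is proper, $W$ is the preimage of the diagonal under the (proper) map $(g,\rho)\mapsto(g\cdot\rho,\rho)$, hence proper and quasi-finite, i.e.\ finite over the base. In characteristic $0$ each fibre is a reduced finite group scheme (Cartier's theorem), so the fibre length $\rho\mapsto \dim_{\CC}\bigl((f_*\mathcal{O}_W)\otimes k(\rho)\bigr)$ equals the cardinality of $Z(\rho)$. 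By upper semicontinuity of fibre lengths of the coherent sheaf $f_*\mathcal{O}_W$, the locus $B=\{\rho\mid \#Z(\rho)>\#Z(G)\}$ is Zariski closed in $R^{\mathrm{irr}}(S,G)$. Since $Z(\rho)\supseteq Z(G)$ always, the complement $R^{\mathrm{irr}}(S,G)\setminus B$ is exactly $\{\rho\mid Z(\rho)=Z(G)\}=R^*(S,G)$, which is therefore Zariski open; combined with the openness of $R^{\mathrm{irr}}(S,G)$ this proves the first assertion.

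For the second assertion I would note that $R^*(S,G)$ is $G$-invariant: conjugating $\rho$ by $h\in G$ replaces $Z(\rho)$ by $h\,Z(\rho)\,h^{-1}$, and since $Z(G)$ is central this preserves the condition $Z(\rho)=Z(G)$. The proper action of $G$ on $R^{\mathrm{irr}}(S,G)$ then restricts to a proper action on the invariant open subset $R^*(S,G)$, as the restriction of a proper action to an invariant locally closed subset is again proper.

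The step I expect to be the main obstacle is precisely the openness of $R^*$ inside $R^{\mathrm{irr}}$, that is, distinguishing a \emph{finite} centralizer from a centralizer \emph{equal to the centre}. The naive tool, upper semicontinuity of $\dim H^0(\pi_1(S);\Ad\rho)=\dim Z(\rho)$, only controls the identity component and yields finiteness, not the equality $Z(\rho)=Z(G)$: extra centralizing elements can appear, and one must show they do so only on a Zariski-closed set. The semicontinuity of the fibre length of the finite stabilizer group scheme---crucially using that in characteristic $0$ the fibres are reduced, so that length counts points---is what controls this component group and closes the gap. An equivalent, more analytic route is the slice theorem for the proper $G/Z(G)$-action, which shows that nearby stabilizers are subconjugate and hence trivial, giving analytic openness that upgrades to Zariski openness by constructibility.
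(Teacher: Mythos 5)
The paper offers no proof of this statement: it is imported wholesale from Johnson--Millson \cite{JohnsonMillson} (their Propositions~1.2 and~1.3), so there is no internal argument to compare yours against. On its own merits your proof is correct, and it isolates the genuinely delicate point accurately: granting that the irreducible ($=$ properly stable) locus is Zariski open and that $G$ acts properly on it, the only real work is to upgrade ``finite centralizer'' to ``centralizer equal to $Z(G)$'' on an open set, and your stabilizer-group-scheme argument does this cleanly --- properness makes $W\to R^{\mathrm{irr}}(S,G)$ finite, Cartier's theorem in characteristic $0$ makes the fibres reduced so that length counts points, and upper semicontinuity of fibre length makes the jump locus $\{\#Z(\rho)>\#Z(G)\}$ closed; the Luna-slice alternative you sketch at the end is in fact closer to how Johnson--Millson themselves argue. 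The one caveat is your treatment of the properness of the $G$-action on the properly stable locus as ``standard GIT'': that statement is essentially Proposition~1.3 of \cite{JohnsonMillson} itself (Mumford gives closed orbits and a geometric quotient on the stable locus, but properness of $G\times X^{s}\to X^{s}\times X^{s}$ needs a further argument, via Kempf--Ness or a slice), so for that half of the Proposition you are citing rather than proving --- which puts you on exactly the same footing as the paper. The remaining steps (openness of the stable locus, $G$-invariance of $R^*(S,G)$, and the fact that a proper action restricts to a proper action on an invariant open subset) are routine and correctly handled.
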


The \emph{variety of characters} is the quotient in the algebraic category:
$$
X(S,G)=R(S,G)\sslash G\, .
$$
Namely, it is an algebraic affine set defined by its ring of polynomial functions, as the ring of functions
on $R(S,G)$ invariant by conjugation.

The projection $R(S,G)\to X(S,G)$ factors through 
a surjective map $\R(S,G)\to X(S,G)$. For good representations we have:

\begin{Proposition}
\label{Prop:smoothvariety}
The natural map restricts to an injection 
$$\Rg(S,G)\hookrightarrow X(S,G)$$ 
whose image is a Zariski open subset and a smooth complex manifold.
\end{Proposition}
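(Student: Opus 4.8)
The plan is to realize $\Rg(S,G)$ as the image of the good locus under the GIT quotient $\pi\colon R(S,G)\to X(S,G)$, and to show that on this locus $\pi$ is a geometric quotient onto a smooth open subset. Recall that for the affine GIT quotient by the reductive group $G$, two representations have the same image in $X(S,G)$ if and only if the closures of their $G$-orbits meet, and each fibre of $\pi$ contains a unique closed orbit. By Proposition~1.1 of \cite{JohnsonMillson} a good representation $\rho$ is irreducible, so its orbit $\mathcal{O}(\rho)$ is closed.

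The first key step is a dimension count. Since $\rho$ is good, $Z(\rho)=Z(G)$, which is finite because $G$ is semisimple; hence $\dim\mathcal{O}(\rho)=\dim G-\dim Z(\rho)=\dim G$ is maximal. Consequently a good orbit cannot be properly contained in the closure of any larger orbit. Combined with the fact that $\mathcal{O}(\rho)$ is the unique closed orbit in its fibre, this shows that the fibre $\pi^{-1}(\pi(\rho))$ is exactly the single orbit $\mathcal{O}(\rho)$: any $\rho'$ with $\pi(\rho')=\pi(\rho)$ satisfies $\mathcal{O}(\rho)\subseteq\overline{\mathcal{O}(\rho')}$, forcing $\dim\mathcal{O}(\rho')=\dim G$ and therefore $\mathcal{O}(\rho')=\mathcal{O}(\rho)$. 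In particular $R^*(S,G)$ is a saturated open subset (a union of fibres), and $\pi$ restricts to an injection of orbit spaces $\Rg(S,G)\hookrightarrow X(S,G)$.

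Openness of the image then follows from saturation together with the standard properties of good quotients: the complement $R(S,G)\setminus R^*(S,G)$ is closed, $G$-invariant and again saturated, and a good quotient sends closed invariant sets to Zariski-closed sets, so $\pi(R^*(S,G))$ is Zariski open. On this saturated open set the good quotient is a geometric quotient, which identifies $\Rg(S,G)=R^*(S,G)/G$ with the open subvariety $\pi(R^*(S,G))$ of $X(S,G)$. For smoothness I would invoke the manifold quotient theorem: $R^*(S,G)$ is smooth, being Zariski open in $R(S,G)\cong G^{k}$; the conjugation action factors through the adjoint group $G/Z(G)$ since $Z(G)$ acts trivially; and on $R^*(S,G)$ this induced action is free (precisely because $Z(\rho)=Z(G)$) and proper (as recalled above from \cite{JohnsonMillson}). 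A free and proper holomorphic action of a complex Lie group on a complex manifold has a smooth complex-manifold quotient, so $\Rg(S,G)$ is smooth. Alternatively, one may apply Luna's étale slice theorem at a good $\rho$, noting that $Z(G)$ acts trivially on the smooth slice, so the slice maps étale onto a neighbourhood of $\pi(\rho)$ and exhibits $X(S,G)$ as smooth there.

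I expect the main obstacle to be the bookkeeping that reconciles the algebraic GIT picture with the analytic manifold structure: verifying saturation of $R^*(S,G)$ through the maximal-dimension count, and then upgrading the resulting geometric quotient on the good locus to a genuine smooth complex manifold identified with an open piece of $X(S,G)$. The supporting GIT facts — separation of closed orbits, closedness of images of invariant closed sets, and the existence of geometric quotients on the properly stable locus — are standard and can be cited rather than reproved.
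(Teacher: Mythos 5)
Your proof is correct and takes essentially the same route as the paper, which simply delegates the statement to standard GIT references (Johnson--Millson and Newstead for injectivity via closed orbits of irreducible, i.e.\ stable, representations, and Goldman for smoothness); the ingredients you use --- uniqueness of the closed orbit in each fibre, saturation of the good locus by the maximal-dimension count, properness and freeness of the $G/Z(G)$-action, and the Luna slice alternative --- are exactly the facts those references supply. You have merely written out in full the argument the paper cites.
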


For the proof, see for instance \cite[\S1]{JohnsonMillson}, or \cite[Proposition~3.8]{Newstead} for injectivity,
as irreducibility is equivalent to stability in GIT \cite[\S1]{JohnsonMillson}.
For smoothnesses see \cite{GoldmanAdvances}.

Given a representation $\rho\in R(S,G)$, the Lie algebra $\mathfrak{g}$ turns into an $\pi_1(S)$-module
via $\operatorname{Ad}\circ \rho$.
If there is no ambiguity this module is denoted just by  $\mathfrak g$,
and the coefficients in cohomology are denoted by
$\operatorname{Ad}\rho$.
\begin{Proposition}
\label{Prop:tangentspace} Let $\rho\in R^*(S,G)$ be a good representation.
Then there is a natural isomorphism
$$T_{[\rho]}\Rg(S,G)\cong H^1(S;\operatorname{Ad}\rho) . $$
In particular the dimension of $\Rg(S,G)$ is  $-\chi(S)\, \dG$.
\end{Proposition}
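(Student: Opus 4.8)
The plan is to identify the relevant tangent spaces with spaces of twisted cocycles and coboundaries, following Weil. First I would exploit that $\pi_1(S)\cong F_\b1$ is free, so that $R(S,G)=\hom(F_\b1,G)\cong G^\b1$ is already a smooth manifold, with $T_\rho R(S,G)\cong\mathfrak g^\b1$. Differentiating the homomorphism condition along a smooth path $\rho_t$ with $\rho_0=\rho$, writing $\rho_t(\gamma)=\exp\bigl(t\,u(\gamma)+o(t)\bigr)\rho(\gamma)$, yields the twisted cocycle relation $u(\gamma\delta)=u(\gamma)+\operatorname{Ad}\rho(\gamma)\,u(\delta)$. Since a cocycle on a free group is determined freely by its values on the generators, this sets up a natural isomorphism $T_\rho R(S,G)\cong Z^1(\pi_1(S);\operatorname{Ad}\rho)$.

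Next I would identify the tangent space to the $G$-orbit $\mathcal O(\rho)$. Differentiating the conjugation path $\gamma\mapsto\exp(tX)\,\rho(\gamma)\,\exp(-tX)$ at $t=0$ produces the principal (coboundary) cocycle $u(\gamma)=X-\operatorname{Ad}\rho(\gamma)X$, so $T_\rho\mathcal O(\rho)\cong B^1(\pi_1(S);\operatorname{Ad}\rho)$. Because $\rho$ is good, its stabilizer $Z(\rho)=Z(G)$ is finite and the action is proper; hence the orbit is an embedded smooth submanifold and, combined with the smoothness of $\Rg(S,G)$ from Proposition~\ref{Prop:smoothvariety}, the quotient map is a submersion near $[\rho]$. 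This gives the natural identification
$$T_{[\rho]}\Rg(S,G)\cong Z^1(\pi_1(S);\operatorname{Ad}\rho)/B^1(\pi_1(S);\operatorname{Ad}\rho)=H^1(\pi_1(S);\operatorname{Ad}\rho),$$
which coincides with $H^1(S;\operatorname{Ad}\rho)$ since $S$ is a $K(\pi_1(S),1)$.

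For the dimension I would use that $F_\b1$ has cohomological dimension one, so only $H^0$ and $H^1$ are nonzero, and the Euler characteristic of the cochain complex gives $\dim H^0-\dim H^1=\chi(F_\b1)\,\dim G=\chi(S)\,\dim G$. Now $H^0(\pi_1(S);\operatorname{Ad}\rho)=\mathfrak g^{\operatorname{Ad}\rho}$ is the Lie algebra of the centralizer $Z(\rho)$; goodness forces $Z(\rho)=Z(G)$, which is finite since $G$ is semisimple, so $H^0=0$. Therefore $\dim H^1=-\chi(S)\,\dim G$, as claimed.

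The cocycle/coboundary bookkeeping is routine. The one point deserving care — the main obstacle — is the passage from the tangent spaces of $R^*(S,G)$ and of the orbit to the tangent space of the quotient: one must know that forming the quotient commutes with forming tangent spaces, i.e.\ that $T_{[\rho]}\Rg(S,G)$ is genuinely $Z^1/B^1$ and not merely a space containing it. This is exactly where properness of the action and finiteness of $Z(\rho)$ enter, via a slice theorem ensuring that $R^*(S,G)\to\Rg(S,G)$ is, near $[\rho]$, a locally trivial fibration with fibre the orbit (principal up to the finite center), so that the orbit directions account for precisely $B^1$ and the transverse directions for $H^1$.
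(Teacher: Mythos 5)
Your proposal is correct and follows essentially the same route as the paper: Weil's identification $T_\rho R(S,G)\cong Z^1$, the orbit tangent space as $B^1$, and a slice argument (the paper invokes Luna's \'etale slice or an analytic slice as in Johnson--Millson) to pass to the quotient and obtain $H^1=Z^1/B^1$. The dimension count via $H^0=\mathfrak g^{\operatorname{Ad}\rho}=0$ for good representations matches what the paper leaves implicit.
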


This proposition can be found for instance in 
\cite[Corollary~50]{Sikora12}, but we sketch the proof  as 
it may be useful for the relative case.

\begin{proof}
Let $Z^1=Z^1(S;\Ad\rho)$
denote the space of crossed morphisms from $\pi_1(S)$
to $\mathfrak{g}$, i.e.~maps 
$d\colon \pi_1(S)\to \mathfrak{g}$
satisfying $d(\gamma\mu)=d(\gamma)+
\operatorname{Ad}_{\rho(\gamma)}d(\mu)$, $\forall\gamma,\mu\in\pi_1(S)$.
Let $B^1=B^1(S;\Ad\rho)$ denote
the subspace of inner crossed morphisms: 
for $a\in\mathfrak{g}$ the corresponding inner morphism
maps $\gamma\in\pi_1(S)$ to 
$\operatorname{Ad}_{\rho(\gamma)}(a)-a$.
Weil's construction identifies 
$Z^1$
with $T_{\rho}R(S,G)$ (usually $Z^1$ is the
Zariski tangent space to a scheme,
possibly non-reduced, but as 
$\pi_1(S)$ is free, $R(S,G)$ is a smooth algebraic variety).
The subspace 
$B^1$
corresponds to the tangent space to the orbit
$\operatorname{Ad}_G(\rho)$. Then, in order to identify
the tangent space to $\Rg(S,G)$ with the cohomology group
$
H^1(S;\Ad\rho)=Z^1/B^1
$,
 we use a slice, for instance
an \'etale slice provided by Luna's theorem \cite[Theorem~6.1]{AlgebraicGeometryIV}, 
or an analytic slice (cf.~\cite{JohnsonMillson}). 
In the setting of a good representation $\rho$,
a slice is a subvariety $\mathcal{S}\subset R(S,G)$
containing $\rho$, invariant by $Z(\rho)=Z(G)$,
such that the conjugation map
$$
G/Z(G)\times \mathcal{S}\to R(S,G)
$$
is locally bi-analytic at $(e,\rho)$ and
the projection $\mathcal{S}\to X(S,G)$ is also
bi-analytic at $\rho$.
(If $\rho$ was not good, we should take care of
the action of $Z(\rho)/Z(G)$. In addition, for $\Gamma$ not a free group
the description is more involved). Then the assertion follows easily
from the properties of the slice.
\end{proof}

\subsection{The relative variety of representations}
\label{subssec:relative}
Let 
$$\partial S=\partial_1\sqcup\cdots\sqcup \partial_\bS$$ 
denote the decomposition in connected components. 
By abuse of notation, we also let $\partial_i$ denote  an element of the fundamental group  represented by the
corresponding  oriented  peripheral curve. This is well defined only up to conjugacy in $\pi_1(S)$,
but our constructions do not depend on the representative in the conjugacy class.

\begin{Definition}[\cite{Kapovich} \S 4.3]
For $\rho_0\in R(S,G)$, the 
 \emph{relative variety of representations} is
\[
\R(S,\partial S,G)_{\rho_0}=\{[\rho]\in \R(S,G)\mid \rho(\partial_i)\in\mathcal{O}(\rho_0(\partial_i)),\ i=1,\ldots,b\}\, .
\]
Here $\mathcal{O} (\rho_0(\partial_i))$ denotes the conjugacy class of $\rho_0(\partial_i)$.
We also denote
$$
\Rg(S,\partial S,G)_{\rho_0}=\R(S,\partial S,G)_{\rho_0}\cap \Rg(S,G)\, .
$$
\end{Definition}

Besides considering good representations, we  restrict our attention to representation which map  peripheral elements to regular elements of $G$.

\begin{Definition}[\cite{Steinberg} \S 3.5]
\label{Definition:regular}
An element $g\in G$ is called \emph{regular} if its centralizer $Z(g)$ has minimal dimension among centralizers
of elements of $G$. Equivalently, its conjugacy class $\mathcal{O}(g)$ has maximal dimension. 
\end{Definition}

This minimal dimension is $\rG$ the rank of $G$ \cite[\S3.5, Proposition~1]{Steinberg}. In $\SLn$, a diagonal matrix is
regular if and only if all eigenvalues are different. 
More generally, $g\in\SLn$ is regular if and only if its minimal polynomial is of degree $N$ \cite[\S3.5, Proposition~2]{Steinberg}.
In particular the companion matrix of a monic polynomial is regular.

\begin{Definition} 
\label{Definition:partialregular}
A representation $\rho\in R(S,G)$ is called \emph{$\partial$-regular} if the elements
$\rho(\partial_1),\dotsc,\rho(\partial_\bS)$ are regular.
\end{Definition}


\begin{Proposition}
\label{prop:relsmooth}
 Let $\rho_0\in R^*( S,G)$ be a good, $\partial$-regular representation.
 \begin{enumerate}[(a)]
  \item $\Rg(S,\partial S, G)_{\rho_0}$ is a complex smooth manifold of dimension 
  \[
  d_0=-\dG\,\chi(S)-\bS\, \rG = \dG (2 g(S) -2)+\bS (\dG-\rG) .
  \]

  \item For $[\rho]\in  \Rg(S,\partial S, G)_{\rho_0} $, there is a natural isomorphism:
   $$T_{[\rho]} \Rg(S,\partial S, G)_{\rho_0}\cong\ker \left(H^1(S;\Ad\rho)\to H^1(\partial S;\Ad\rho )\right)
   .$$
 \end{enumerate}
\end{Proposition}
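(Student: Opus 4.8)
The plan is to reduce Proposition~\ref{prop:relsmooth} to the unrestricted case (Propositions~\ref{Prop:smoothvariety} and~\ref{Prop:tangentspace}) by describing $\Rg(S,\partial S,G)_{\rho_0}$ as a fibre of the restriction-to-boundary map. First I would set up the map
\[
\operatorname{res}\colon \Rg(S,G)\to \prod_{i=1}^{\bS}\R^{reg}(\partial_i,G),\qquad [\rho]\mapsto \bigl([\rho|_{\pi_1(\partial_i)}]\bigr)_i,
\]
which is well defined near $[\rho_0]$ because $\partial$-regularity is an open condition and, by Steinberg's theorem quoted above, $\R^{reg}(\partial_i,G)\cong G^{reg}/G\cong\CC^{\rG}$ is a smooth manifold of dimension $\rG$. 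Under the identification $\R^{reg}(S^1,G)\cong G^{reg}/G$, each conjugacy class $\mathcal O(\rho_0(\partial_i))$ of a regular element is a single point, so
\[
\Rg(S,\partial S,G)_{\rho_0}=\operatorname{res}^{-1}\bigl(([\rho_0(\partial_i)])_i\bigr)
\]
is exactly the fibre of $\operatorname{res}$ over the point determined by $\rho_0$.

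The key step is then to show that $\operatorname{res}$ is a submersion at good $\partial$-regular representations, after which both assertions follow from the implicit/submersion theorem. On tangent spaces, the identification $T_{[\rho]}\Rg(S,G)\cong H^1(S;\Ad\rho)$ from Proposition~\ref{Prop:tangentspace} and the corresponding statement $T_{[\rho(\partial_i)]}\R^{reg}(\partial_i,G)\cong H^1(\partial_i;\Ad\rho)$ (Corollary~\ref{coro:Steinberg}) turn the differential of $\operatorname{res}$ into the restriction map on cohomology
\[
H^1(S;\Ad\rho)\to H^1(\partial S;\Ad\rho)=\bigoplus_{i=1}^{\bS} H^1(\partial_i;\Ad\rho)
\]
induced by the inclusion $\partial S\hookrightarrow S$. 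So I would prove surjectivity of this restriction map, and identify its kernel with the tangent space to the fibre, which is precisely statement~(b). For the dimension count in~(a), surjectivity gives
\[
\dim T_{[\rho]}\Rg(S,\partial S,G)_{\rho_0}=\dim H^1(S;\Ad\rho)-\sum_{i=1}^{\bS}\dim H^1(\partial_i;\Ad\rho),
\]
and using $\dim H^1(S;\Ad\rho)=-\chi(S)\,\dG$ together with $\dim H^1(\partial_i;\Ad\rho)=\dim H^0(\partial_i;\Ad\rho)=\rG$ (the dimension of the centralizer of a regular element, by Poincar\'e duality on the circle) yields $d_0=-\dG\,\chi(S)-\bS\,\rG$.

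The main obstacle is the surjectivity of the restriction map $H^1(S;\Ad\rho)\to H^1(\partial S;\Ad\rho)$. The natural tool is the long exact sequence of the pair $(S,\partial S)$ with coefficients in $\Ad\rho$,
\[
\cdots\to H^1(S;\Ad\rho)\to H^1(\partial S;\Ad\rho)\to H^2(S,\partial S;\Ad\rho)\to\cdots,
\]
so surjectivity amounts to the vanishing of the connecting map into $H^2(S,\partial S;\Ad\rho)$. By Poincar\'e--Lefschetz duality $H^2(S,\partial S;\Ad\rho)\cong H^0(S;\Ad\rho)^*$, and for a good (hence irreducible) representation $H^0(S;\Ad\rho)=\mathfrak g^{\pi_1(S)}$ is the Lie algebra of the centralizer, which is trivial since $Z(\rho)=Z(G)$ is finite. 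Thus $H^2(S,\partial S;\Ad\rho)=0$, forcing the restriction map to be surjective and giving the short exact sequence
\[
0\to \ker\to H^1(S;\Ad\rho)\to H^1(\partial S;\Ad\rho)\to 0
\]
quoted before the proposition. I would take care to justify that $\partial$-regularity is open (so the target is a genuine smooth manifold near the image) and that the slice argument of Proposition~\ref{Prop:tangentspace} globalizes the tangent-space identification to an honest manifold structure on the fibre; these are the places where the good and $\partial$-regular hypotheses are genuinely used.
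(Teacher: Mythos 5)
Your proposal is correct, and its cohomological core is identical to the paper's: both proofs hinge on the surjectivity of $H^1(S;\Ad\rho)\to H^1(\partial S;\Ad\rho)$, obtained from the long exact sequence of the pair $(S,\partial S)$ together with Poincar\'e--Lefschetz duality and the vanishing of $H^0(S;\Ad\rho)\cong\mathfrak g^{\Ad\rho(\pi_1(S))}$ for a good representation. Where you differ is in the packaging of the transversality step. The paper stays upstairs: it takes a slice $\mathcal S\subset R(S,G)$ through $\rho_0$, shows that the restriction map $\mathcal S\to G^{\bS}$ is transverse to the product of conjugacy orbits $O=\prod_i\mathcal O(\rho_0(\partial_i))$, and uses the rank theorem to see that each orbit of a regular element is a $(\dG-\rG)$-dimensional analytic submanifold, so the preimage has codimension $\bS\,\rG$. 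You instead pass to the quotient and realize $\Rg(S,\partial S,G)_{\rho_0}$ as the fibre over a point of the map $\Rg(S,G)\to\prod_i\R^{reg}(\partial_i,G)\cong\CC^{\bS\rG}$, which requires Steinberg's theorem (hence simple connectivity of $G$, assumed throughout the paper) to give the target its smooth structure; the paper's version of the argument does not need this input at this stage. Both routes buy the same conclusion, and yours has the advantage of making the exact sequence of Corollary~\ref{coro:Steinberg}~(ii) transparent. Two small points to watch: you should verify explicitly that under the identifications $T_{[\rho]}\Rg(S,G)\cong H^1(S;\Ad\rho)$ and $T_{[\rho(\partial_i)]}\R^{reg}(\partial_i,G)\cong H^1(\partial_i;\Ad\rho)$ the differential of $\operatorname{res}$ really is the restriction map on cocycles (a routine naturality check at the level of $Z^1$), and you should cite only the first assertion of Corollary~\ref{coro:Steinberg}~(ii) (the isomorphism $H^1(S^1;\Ad\rho)\cong T_{[\rho]}\R^{reg}(S^1,G)$), since the exact sequence in that corollary is itself deduced in the paper from the proposition you are proving.
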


\begin{proof}
We first show that the map   $H^1(S;\Ad\rho)\to H^1(\partial S;\Ad\rho )$ is a surjection. 
By Poincar\' e duality 
$H^2( S,\partial S;\Ad\rho ) \cong H^0(S;\Ad\rho )\cong \mathfrak{g}^{\operatorname{Ad}\rho(\pi_1(S))}$, that vanishes because
$Z(\rho)$ is finite. Thus, by the long exact sequence of the pair $( S,\partial S)$, the map $H^1(S;\Ad\rho)\to H^1(\partial S;\Ad\rho )$ is a surjection. 


We use a slice at $\rho_0$, $\mathcal{S}\subset R(M)$ as in the proof of Proposition~\ref{Prop:tangentspace}.
The fact that $H^1(S;\Ad\rho)\to H^1(\partial S;\Ad\rho )$ is a surjection
means that the restriction map 
$$
\operatorname{res}\vert _\mathcal{S}\colon\mathcal{S}\to R(\partial S, G)=
\prod_{i=1}^\bS R(\partial_i, G)= G^\bS
$$
 is transverse to the products of orbits by conjugation
$$
O=\prod_{i=1}^\bS\mathcal{O}(\rho(\partial_i))\, .
$$
Namely, $(\operatorname{res}\vert_\mathcal{S})_* (T_{\rho}\mathcal{S})+ T_{\operatorname{res}(\rho)} O= T _{\operatorname{res}(\rho)} G^\bS$. 
It follows from the rank theorem \cite[C.4.1]{Loj} that 
$\mathcal{O}(\rho(\partial_i))\subset G$ is a complex analytic subvariety of dimension 
$\dG-\rG$ because $\rho$ is  $\partial$-regular.
Thus $(\operatorname{res}\vert_\mathcal{S})^{-1}(O)$
is an analytic $\CC$-submanifold of codimension 
$$\dim G^\bS -\dim O
=  \sum_{i=1}^\bS \big(\dim G -\dim \mathcal{O}(\rho(\partial_i))\big)
=
\bS\, \rG\, .
$$
Now the proposition follows from the properties of the slice.
\end{proof}

\subsection{Steinberg map}
\label{subsection:Steinberg}

In order to understand the space of conjugacy classes of regular representations of $\ZZ$
we identify each representation with the image of its generator, so that 
$$
R^{reg}(\ZZ,G)= G^{reg}
\quad\textrm{ and }\quad
\R^{reg}(\ZZ,G)= G^{reg}/G.
$$
Consider the Steinberg map
\begin{equation}
\label{eqn:Steinberg}
(\sigma_1,\cdots,\sigma_r) \colon G  \to \CC^r
\end{equation}
where $\sigma_1,\cdots,\sigma_r$ denote the characters corresponding to a system of fundamental representations (for $\SLn$
those are the coefficients of the characteristic polynomial).

\begin{Theorem}(Steinberg, \cite{SteinbergIHES})
\label{thm:Steinberg}
If $G$ is simply connected, then
 the map \eqref{eqn:Steinberg}
 is a surjection and has a section $ s\colon \CC^r\to G^ {reg}$ so that 
 $s(\CC^r)$ is a subvariety
that 
 intersects each orbit by conjugation in $G^ {reg}$  precisely once.
 \end{Theorem}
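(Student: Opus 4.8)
The plan is to prove the theorem by exhibiting Steinberg's explicit cross-section and deducing every assertion from its properties, rather than arguing abstractly. Fix a maximal torus $T\subset G$, a Borel subgroup $B=TU$ containing it with unipotent radical $U$, and an ordering $\alpha_1,\ldots,\alpha_r$ of the simple roots. For each $i$ I would choose a one-parameter root subgroup $x_{\alpha_i}\colon\CC\to U$ and a representative $n_i\in N(T)$ of the simple reflection $s_i\in W$, and let $c=s_1\cdots s_r$ be the resulting Coxeter element. I then set
\[
C=\bigl\{\, x_{\alpha_1}(t_1)\,n_1\,x_{\alpha_2}(t_2)\,n_2\cdots x_{\alpha_r}(t_r)\,n_r \ :\ (t_1,\ldots,t_r)\in\CC^r \,\bigr\},
\]
and aim to show that $s:=\bigl((\sigma_1,\ldots,\sigma_r)|_C\bigr)^{-1}$ is the desired section, with $s(\CC^r)=C$.

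Two facts carry the argument. First, the product map $\phi\colon\CC^r\to G$, $(t_1,\ldots,t_r)\mapsto x_{\alpha_1}(t_1)n_1\cdots x_{\alpha_r}(t_r)n_r$, is a closed immersion, so that $C\cong\CC^r$. Indeed, pushing all the root-subgroup factors to the left shows $C=U(c)\,\dot c$, where $\dot c=n_1\cdots n_r$ and $U(c)$ is the product of the root subgroups for the $r$ distinct positive roots $\beta_i=s_1\cdots s_{i-1}\alpha_i$; since these lie in distinct root spaces, the $U$-coordinates pin down $(t_1,\ldots,t_r)$ uniquely, by the uniqueness in the Bruhat decomposition. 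Second, and this is the computational core, the restriction $(\sigma_1,\ldots,\sigma_r)|_C\colon C\to\CC^r$ is an isomorphism of affine varieties: evaluating the fundamental characters $\sigma_j$ on $\phi(t_1,\ldots,t_r)$ produces polynomials that are \emph{triangular}, i.e.\ after a suitable ordering $\sigma_j$ has the shape $u_j\,t_j+(\text{terms in the remaining variables})$ with $u_j\in\CC^*$, so the map is a triangular polynomial automorphism of $\CC^r$. Combined with $C\cong\CC^r$ this already forces $(\sigma_1,\ldots,\sigma_r)\colon G\to\CC^r$ to be surjective and produces the section $s$ with image the closed subvariety $C$. Here simple connectivity is indispensable: by Chevalley's theorem $\CC[T]^W=\CC[\sigma_1,\ldots,\sigma_r]$ is a polynomial ring exactly when $G$ is simply connected, so that $G\sslash G\cong T/W\cong\CC^r$ is the affine space appearing as target. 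Moreover, since $s\colon\CC^r\to C$ is an isomorphism, the differential of $(\sigma_1,\ldots,\sigma_r)$ is surjective at every point of $C$; by Steinberg's regularity criterion (an element is regular if and only if the differential of the quotient map has maximal rank $r$ there) every element of $C$ is regular, so $s(\CC^r)=C\subset G^{reg}$.

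It remains to see that $C$ meets each conjugacy orbit of $G^{reg}$ exactly once, for which I would show that each fibre $\sigma^{-1}(\cc)$, $\cc\in\CC^r$, contains a unique regular orbit. Each such fibre is a closed, conjugation-invariant set of dimension $\dim G-r$, while a regular orbit has exactly this dimension, its codimension being the minimal centralizer dimension $r$. Using that the fibre is irreducible, a regular orbit inside it has full dimension, hence is open and dense; as a fibre cannot contain two disjoint dense open subsets, the regular orbit is unique. Since $\sigma$ is conjugation-invariant, every regular orbit lies in a single fibre $\sigma^{-1}(\cc)$, and $C$ meets that fibre in the single point $s(\cc)$, which is regular and therefore lies in the unique regular orbit of the fibre; consequently $C$ intersects every orbit of $G^{reg}$ in exactly one point.

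The main obstacle is twofold. The genuinely technical step is the triangularity computation showing $(\sigma_1,\ldots,\sigma_r)|_C$ is an isomorphism, which requires careful bookkeeping of how the simple reflections and the root subgroups interact when the fundamental characters are expanded; this is precisely where the choice of a Coxeter element is essential. The other delicate input is the irreducibility of the fibres of the Steinberg map together with the regularity criterion, both of which rely on simple connectivity of $G$ --- exactly the hypothesis that fails in the general case (compare Remark~\ref{rem:Gnonsimplyconnected}), where $T/W$ need no longer be an affine space and the clean cross-section disappears.
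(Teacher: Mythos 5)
The paper offers no proof of this statement: it is Steinberg's theorem, quoted with the reference \cite{SteinbergIHES}, and the only accompanying comment is that for $G=\SLn$ the section may be taken to be the companion matrix (with pointers to \cite{Steinberg} and \cite{Humphreys}). So there is nothing in the paper to compare your argument against; what you have written is a reconstruction of Steinberg's own proof, and as a road map it is faithful. The cross-section $C=x_{\alpha_1}(t_1)n_1\cdots x_{\alpha_r}(t_r)n_r$ attached to a Coxeter element, the identification $C\cong U(c)\,\dot c\cong\CC^r$ via uniqueness in the Bruhat decomposition, the claim that $(\sigma_1,\dots,\sigma_r)|_C$ is an isomorphism onto $\CC^r$, the differential criterion for regularity, and the uniqueness of the regular class in each fibre of the adjoint quotient are exactly the ingredients of Steinberg's argument.

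Two caveats keep this from being a self-contained proof. First, the steps you defer are precisely the substance of Steinberg's paper: the weight-theoretic computation behind the triangularity of $\sigma_j\circ\phi$, the irreducibility and equidimensionality of the fibres (you need $\dim\sigma^{-1}(\cc)=\dim G-r$ for \emph{every} fibre, not just the generic one, or the regular orbit need not be dense), and the rank criterion for regularity are each nontrivial theorems rather than routine verifications, so invoking them amounts to citing the result you are proving from the same source. Second, your gloss of triangularity --- ``$u_jt_j+(\text{terms in the remaining variables})$'' --- is too weak to force $(\sigma_1,\dots,\sigma_r)|_C$ to be an automorphism of $\CC^r$: the map $(t_1+t_2^2,\;t_2+t_1^2)$ has exactly this shape and is not one, since its Jacobian determinant is non-constant. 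What Steinberg actually proves, and what you need, is that for a suitable ordering the extra terms involve only the variables preceding $t_j$. A smaller slip: ``$\CC[T]^W$ is a polynomial ring exactly when $G$ is simply connected'' is an overstatement (it is a polynomial ring for $\operatorname{PGL}_2(\CC)$ as well); only the forward implication is used, and that one is correct.
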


For instance, when $G=\SLn$ the section in Theorem~\ref{thm:Steinberg} 
can be chosen to be the companion matrix (see \cite[p.~120]{Steinberg} and \cite[Sec.~4.15]{Humphreys}).

\begin{Corollary}
\label{coro:Steinberg}
If $G$ is simply connected, then:
\begin{enumerate}[(i)]
 \item  The map \eqref{eqn:Steinberg} induces natural isomorphisms between
 the space of regular orbits by conjugation, the variety of characters, and $\CC^r$:
  $$
 \R^ {reg}(S^ 1, G)\cong X(S^ 1, G)\cong   \CC^ r  .
 $$
 \item \label{ii} The Steinberg map induces  a natural isomorphism
 $$
 H^ 1(S^ 1,\Ad\rho)\cong  T_{[\rho]}\R^ {reg}(S^ 1, G)\cong \CC^ r\,.
 $$ 
 Moreover, for each  good, $\partial$-regular representation $\rho_0\in R^*(S,G)$ and
$ [\rho]\in\R^*(S,\partial S,G)_{\rho_0}$ there is an exact sequence 
 $$
0\to T_{[\rho]}\R^*(S,\partial S,G)_{\rho_0} \to T_{[\rho]}\R^*(S,G) \to \bigoplus_{i=1}^b T_{[\rho(\partial_i)]} \R^ {reg}(\partial_i, G)
\to 0
.
 $$
\end{enumerate}
\end{Corollary}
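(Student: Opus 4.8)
The plan is to deduce part~(i) directly from Steinberg's theorem together with classical invariant theory, to establish the cohomological identification in part~(ii) by a slice argument adapted to the conjugation action on $G^{reg}$, and finally to assemble the exact sequence from the tangent space descriptions already obtained in Propositions~\ref{Prop:tangentspace} and~\ref{prop:relsmooth}.

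For part~(i), the isomorphism $\R^{reg}(S^1,G)\cong\CC^r$ is immediate from the section $s\colon\CC^r\to G^{reg}$ of Theorem~\ref{thm:Steinberg}: since $s(\CC^r)$ meets each conjugacy orbit in $G^{reg}$ exactly once, the composite $\CC^r\xrightarrow{s}G^{reg}\to G^{reg}/G=\R^{reg}(S^1,G)$ is a bijection inverse to the map induced by $(\sigma_1,\dots,\sigma_r)$. For the character variety, Steinberg's theorem also asserts that $\CC[G]^G=\CC[\sigma_1,\dots,\sigma_r]$ is a polynomial algebra, so $X(S^1,G)=G\sslash G\cong\CC^r$ via the same fundamental characters; the natural map $\R^{reg}(S^1,G)\to X(S^1,G)$ then becomes the identity on $\CC^r$, giving the stated isomorphisms.

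For the first isomorphism in part~(ii), I would identify $S^1$ with a $K(\ZZ,1)$ and set $A=\rho(1)\in G^{reg}$. The group cohomology of $\ZZ$ with coefficients in $(\mathfrak g,\Ad_A)$ gives $H^0(S^1;\Ad\rho)=\ker(\Ad_A-\operatorname{Id})=\operatorname{Lie}Z(A)$ and $H^1(S^1;\Ad\rho)=\operatorname{coker}(\Ad_A-\operatorname{Id})$, both of dimension $\rG$ since $A$ is regular. Weil's description (as in Proposition~\ref{Prop:tangentspace}) identifies $T_AR^{reg}(S^1,G)=Z^1(\ZZ;\mathfrak g)$ with $\mathfrak g$, a crossed morphism being determined by its value on the generator, and the orbit direction with $B^1=\operatorname{im}(\Ad_A-\operatorname{Id})=T_A\mathcal O(A)$. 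It then remains to see that the differential of the quotient map $G^{reg}\to\R^{reg}(S^1,G)$ at $A$ is surjective with kernel exactly $T_A\mathcal O(A)$; this descends to the desired natural isomorphism $H^1(S^1;\Ad\rho)=\mathfrak g/B^1\xrightarrow{\sim}T_{[\rho]}\R^{reg}(S^1,G)\cong\CC^r$.

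The exact sequence in part~(ii) is then formal. By Proposition~\ref{Prop:tangentspace} we have $T_{[\rho]}\R^*(S,G)\cong H^1(S;\Ad\rho)$; by Proposition~\ref{prop:relsmooth}(b), $T_{[\rho]}\R^*(S,\partial S,G)_{\rho_0}\cong\ker\!\big(H^1(S;\Ad\rho)\to H^1(\partial S;\Ad\rho)\big)$; and applying the first isomorphism of~(ii) to each boundary circle (each $\rho|_{\pi_1(\partial_i)}$ is regular, $\rho$ being $\partial$-regular) identifies $\bigoplus_{i=1}^b T_{[\rho(\partial_i)]}\R^{reg}(\partial_i,G)$ with $\bigoplus_{i=1}^b H^1(\partial_i;\Ad\rho)=H^1(\partial S;\Ad\rho)$. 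Under these identifications the geometric restriction-to-the-boundary map is the restriction map in cohomology, by naturality of the Weil cocycle description. Since the proof of Proposition~\ref{prop:relsmooth} already shows this restriction is surjective (Poincar\'e duality gives $H^2(S,\partial S;\Ad\rho)\cong H^0(S;\Ad\rho)=0$ for a good $\rho$), we obtain the short exact sequence $0\to\ker\to H^1(S;\Ad\rho)\to H^1(\partial S;\Ad\rho)\to0$, which is exactly the asserted one. The main obstacle is the surjectivity-and-kernel statement for the quotient differential used in part~(ii), equivalently the fact that the Steinberg map is a submersion of maximal rank $\rG$ precisely on the regular locus $G^{reg}$, so that the section $s$ is transverse to the conjugation orbits and serves as a genuine slice. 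This transversality is what upgrades the set-theoretic bijections of Theorem~\ref{thm:Steinberg} to isomorphisms of smooth manifolds and pins down the tangent spaces; everything else is linear algebra over $\ZZ$-cohomology and a diagram chase. I would handle it by invoking the slice $s(\CC^r)$ as in Proposition~\ref{Prop:tangentspace}, noting that for $S^1$ the representations are not good and that the role of the finite centralizer there is played here by the $\rG$-dimensional centralizer $Z(A)$, whose Lie algebra accounts precisely for the drop from $\dim G$ to the dimension $\rG$ of the quotient.
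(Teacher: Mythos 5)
Your proposal is correct and follows essentially the same route as the paper: part~(i) from the Steinberg section, part~(ii) from the fact that the Steinberg map is a submersion on $G^{reg}$ (so its differential kills $B^1$ and induces an isomorphism $H^1(S^1;\Ad\rho)\cong\CC^r$ by dimension count), and the exact sequence from the long exact sequence of the pair $(S,\partial S)$ together with Propositions~\ref{Prop:tangentspace} and~\ref{prop:relsmooth}. The one point you flag as the ``main obstacle''---surjectivity of $d(\sigma_1,\dots,\sigma_r)$ on the regular locus---is exactly what the paper disposes of by citing Humphreys \S4.19, and it does follow from your setup since $(\sigma_1,\dots,\sigma_r)\circ s=\operatorname{id}$ forces the differential to be surjective along $s(\CC^r)$ and conjugation-invariance propagates this to all of $G^{reg}$.
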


\begin{proof}
 For (i), notice that what we aim to prove is the isomorphism $
 G^ {reg}/G\cong G \sslash  G\cong   \CC^ r$; which is straightforward from the existence of the section 
 in  Theorem~\ref{thm:Steinberg}.
 
For (ii), by the existence of the section we also know that the differential of Steinberg's map
$Z^1(\ZZ,\Ad\rho)\cong \mathfrak g\to \CC^r$ is surjective whenever $\rho$ is regular \cite[\S 4.19]{Humphreys}. In addition
it maps $B^1(\ZZ,\Ad\rho)$ to $0$, because Steinberg map is constant on orbits by conjugation. 
Thus we have a well defined surjection $H^1(S^1,\Ad\rho)\to \CC^r$, which is an isomorphism because of the dimension. 
The exact sequence follows from the long exact sequence in cohomology of the pair $(S,\partial S)$ and the identification
of cohomology groups with tangent spaces, cf.~Proposition~\ref{prop:relsmooth}.
\end{proof}

\begin{Remark}
\label{rem:Gnonsimplyconnected}
When $G$ is not simply connected, then the universal covering $\tilde G\to G$ is finite and $\pi_1(G)$ can be identified with a (finite) central subgroup $Z$ of $\tilde G$. The center of $\tilde G$ acts on the quotient $\tilde G \sslash \tilde G$ and we obtain a commutative diagram
\[
\begin{CD}
 \tilde G @>>> \tilde G \sslash \tilde G \\
 @VVV     @VV{\varphi}V\\
 G @>>>  G \sslash  G 
\end{CD}
\]
where $(G\sslash G,\varphi)$ is a quotient for the action of $Z$ on $\tilde G \sslash \tilde G$ (see \cite[Lemma~2.5]{Popov}).
Notice that $\varphi$ is a finite branched covering.

Then part~(\ref{ii}) of Corollary~\ref{coro:Steinberg} can be easily adapted for those $[g]\in G\sslash G$ 
which are outside the branch set of $\varphi$.
\end{Remark}


 \section{Reidemeister torsion}
 \label{Section:RT} 
 
Let $\rho\in R(S,G)$ be a representation; recall that we consider the action of $\pi_1(S)$ on  $\mathfrak{g}$ via the adjoint of $\rho$.
Most of the results in this section apply not only to  $\mathfrak{g}$ but to its real form
$\mathfrak{g}_{\RR}$, provided that the image of the representation is contained in $G_\RR$.
Recall also that we assume that $\B$ restricted to the compact real form
$\mathfrak{g}_{\RR}$ is positive definite.
 
Consider a cell decomposition $K$ of $S$. 
If $C_*(\widetilde K;\mathbb Z)$ denotes the simplicial chain complex
on the universal covering, one defines
 \begin{equation}
  \label{eqn:chaincplx}
   C^*(K;\Ad\rho)=\hom_{\pi_1(S)}( C_*(\widetilde K;\mathbb Z), \mathfrak{g}) .
 \end{equation}

 We consider the so called geometric basis. Start with a $\B$-orthonormal $\CC$-basis $\{m_1,\ldots,m_{\dG}\}$
of $\mathfrak{g}$. 
For each $i$-cell $e^i_j$ of $K$ we choose a lift $\tilde e^i_j$ to the universal covering $\widetilde{K}$, then 
$$
\mathbf c^i=\{ (\tilde e^i_j )^*\otimes m_k \}_{jk} 
$$
is a basis of $ C^i(K;\Ad\rho)$, called the \emph{geometric basis}. Here, 
$(\tilde e^i_j )^*\otimes m_k\colon C_*(\widetilde K;\mathbb Z)\to \mathfrak{g}$ is the unique
$\pi_1(S)$-homomorphism given by $(\tilde e^i_j )^*\otimes m_k (\tilde e^i_l) =\delta_{jl} m_k$.

On the other hand, if 
$B^i=\operatorname{Im}(\delta\colon C^{i-1}(K;\Ad\rho)\to  C^i(K;\Ad\rho)) $ is the  space of coboundaries, chose 
$\mathbf b^i$ a basis for $B^i\subset C^i$ and chose lift $\widetilde{\mathbf b^i}$  to $C^{i-1}$ by the coboundary map.
For a basis $\mathbf h^i$ of  $H^i(K;\Ad\rho)$, consider also representatives   
$\widetilde{\mathbf h^i}\in C^i(K;\Ad\rho)$. Then the disjoint union
$$ 
\widetilde{\mathbf b^{i+1}}\sqcup  \widetilde{\mathbf h^i} \sqcup {\mathbf b^i}
$$ is also a basis for $C^i(K;\Ad\rho)$.
Notice that we are interested in the case where the zero and second cohomology groups vanish, so we assume that $\widetilde{\mathbf h^0}=\widetilde{\mathbf h^2}=\emptyset$.

\smallskip

Reidemeister torsion is defined as
\begin{equation}
\tor(S,\operatorname{Ad}\rho, \mathbf h^1)= 
\frac{[  \widetilde{\mathbf b^2}\sqcup  \widetilde{\mathbf h^1} \sqcup {\mathbf b^1}: \mathbf{c}^1   ]
}{ [  \widetilde{\mathbf b^1}  : \mathbf{c}^0 ] [  {\mathbf b^2}  : \mathbf{c}^2 ]  } 
\in\CC^*/\{\pm 1\}
\end{equation}
Here, for two bases $\mathbf a$ and $\mathbf b$ of a vector space, $[\mathbf a:\mathbf b]$ denotes the determinant the matrix whose colons are the coefficients of the vectors of
$\mathbf a$ as linear combination of $\mathbf b$. 

\begin{Remark}\label{rem:factorTorsion}
The choice of the bilinear form  $\B$ is relevant, as we use a
$\B$-orthonormal basis for  $\mathfrak{g}$ and $\chi(S)\neq 0$. Namely, if we replace $\B$ by $\lambda^2\B$,
then the orthonormal basis will be $\frac1\lambda\{m_1,\ldots,m_{\dG}\}$ and the torsion will be multiplied by
a factor $\lambda^{-\chi(S)\dG}= \lambda^{\dim \R(S,G)} $. 
 \end{Remark}

For an ordered basis $\mathbf a=\{a_1,\ldots, a_m\}$ of a vector space, denote
$$\wedge\mathbf a=a_1\wedge\cdots \wedge a_m\, .$$
Since $\wedge\mathbf a= [\mathbf a:\mathbf b] (\wedge\mathbf b)$, the notation 
$$ 
[\mathbf a:\mathbf b]  = \wedge\mathbf a \,  \slash   {\wedge\mathbf{b}}
$$ is often used in the literature (cf.~\cite{MilnorDuality}).

\subsection{The holomorphic volume form}
 
 The tangent space to $\Rg(S,G) $ at $[\rho]$ is identified to t $H^1(S; \Ad\rho )$, by Proposition~\ref{Prop:tangentspace}.
 There is a natural holomorphic volume form on $H^1(S; \Ad\rho)$:
$$
\Omega_S(\wedge \mathbf h )=\pm \tor(S,\operatorname{Ad}\rho,\mathbf h)
$$
where
$ \mathbf h $ 
is a basis for  $H^1(S;\Ad\rho)$.

The surface $S$ has the simple homotopy type of a graph. Moreover,  graphs that are homotopy equivalenet are also simple-homotopy equivalent, thus this
volume form depends only on the fundamental group
$$
\Omega_{\pi_1(S)}=\Omega_S. 
$$

The bilinear form $\B$ defines 
a bi-invariant volume form $\theta_G$ on the Lie group $G$ in the usual way.
Hence $(\theta_G)^{k}$ is a volume form on $R(\pi_1(S),G)\cong G^k$. 

For a good representation $\rho$ the form $\theta_G$
induces also a form $\theta_{\mathcal{O}(\rho)}$ on the orbit 
$\mathcal{O}(\rho)$ by push-forward:
the orbit map $f_\rho\colon G \to R(\pi_1(S),G)$, $f_\rho(g) = \operatorname{Ad}_g \circ \rho$
factors through the quotient $G/Z(G)$. The quotient map $G\to G/Z(G)$ is a Lie group covering and we get an isomorphism $\bar f_\rho\colon G/Z(G)\to \mathcal{O}(\rho)$, and hence  
\begin{equation}\label{eq:push_forward}
\theta_{\mathcal{O}(\rho)}=
(f_\rho)_* (\theta_{G})\, .
\end{equation}

The next lemma justifies why Reidemeister torsion is the natural choice of volume form
on the variety of representations up to conjugation.

\begin{Lemma}
\label{lemma:volume_orbits}
 Let $\pi\colon R^*( S,G)\to \Rg(S, G)$ denote the projection. 
 Then at $\rho\in R^*( S,G)$ we have:
 $$
 (\theta_G)^{k} = \pm \theta_{\mathcal{O}(\rho)} \wedge \,{\pi^* \Omega_{S}}\, .
 $$
\end{Lemma}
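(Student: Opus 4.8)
The plan is to reduce the statement to a single linear-algebra identity on the tangent space $T_\rho R^*(S,G)$ and to read off each of the three volume forms from the geometric basis of the cochain complex. Since $R(S,G)=\hom(\pi_1(S),G)$ depends only on $\pi_1(S)$ and Reidemeister torsion is a simple-homotopy invariant, I would compute with the graph model $K=$ wedge of $k$ circles, so that $C^0(K;\Ad\rho)=\mathfrak g$, $C^1(K;\Ad\rho)=\mathfrak g^k$, and $C^2=0$. Weil's identification then gives $T_\rho R^*(S,G)=Z^1=C^1$ (every $1$-cochain is a cocycle), the tangent space to the orbit is $B^1=\operatorname{im}(\delta^0)$, and $T_{[\rho]}\Rg(S,G)=H^1=Z^1/B^1$; as $\rho$ is good, $H^0=0$ and $\delta^0$ is injective. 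These spaces fit into the exact sequence
$$
0\to B^1\to Z^1\xrightarrow{d\pi} H^1\to 0,
$$
so it suffices to compare the two sides of the claimed identity on one adapted basis of $Z^1$.

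Next I would evaluate each form against the geometric bases. For $(\theta_G)^k$: under Weil's isomorphism the component $d(\gamma_i)\in\mathfrak g$ of a cocycle is identified with a vector in $T_{\rho(\gamma_i)}G$ by translation, and since $\theta_G$ is bi-invariant it pulls back, at every point, to the unit form of the $\B$-orthonormal basis $\{m_1,\dots,m_{\dG}\}$; hence $(\theta_G)^k$ is precisely the volume form dual to the geometric basis $\mathbf c^1=\{e_j^*\otimes m_l\}$, i.e.\ $(\theta_G)^k(\wedge\mathbf a)=\pm[\mathbf a:\mathbf c^1]$ for every basis $\mathbf a$ of $Z^1$. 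For $\theta_{\mathcal O(\rho)}$: the push-forward relation \eqref{eq:push_forward} together with the factorization $f_\rho=\bar f_\rho\circ q$ through the finite covering $q\colon G\to G/Z(G)$ yields $f_\rho^*\theta_{\mathcal O(\rho)}=\theta_G$, so $\theta_{\mathcal O(\rho)}$ evaluated on the image of $\{m_l\}$ under $df_\rho$ equals $\pm1$; since $df_\rho=\pm\delta^0$ on $\mathfrak g=C^0$, this reads $\theta_{\mathcal O(\rho)}(\wedge\,\delta^0(\mathbf c^0))=\pm1$. Finally $\Omega_S(\wedge\mathbf h^1)=\pm\tor(S,\Ad\rho,\mathbf h^1)$ by definition.

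To conclude I would make the convenient choice $\mathbf b^1:=\delta^0(\mathbf c^0)$ of basis for $B^1$, so that its canonical lift is $\widetilde{\mathbf b^1}=\mathbf c^0$ and $[\widetilde{\mathbf b^1}:\mathbf c^0]=1$; because $C^2=0$ (so $\mathbf b^2=\widetilde{\mathbf b^2}=\emptyset$), the torsion formula collapses to $\tor(S,\Ad\rho,\mathbf h^1)=\pm[\widetilde{\mathbf h^1}\sqcup\mathbf b^1:\mathbf c^1]$. Evaluating the right-hand side on the adapted basis $\mathbf b^1\sqcup\widetilde{\mathbf h^1}$ of $Z^1$ factors as
$$
(\theta_{\mathcal O(\rho)}\wedge\pi^*\Omega_S)(\wedge(\mathbf b^1\sqcup\widetilde{\mathbf h^1}))=\theta_{\mathcal O(\rho)}(\wedge\mathbf b^1)\cdot\Omega_S(\wedge\mathbf h^1)=\pm[\widetilde{\mathbf h^1}\sqcup\mathbf b^1:\mathbf c^1],
$$
while the left-hand side gives $(\theta_G)^k(\wedge(\mathbf b^1\sqcup\widetilde{\mathbf h^1}))=\pm[\mathbf b^1\sqcup\widetilde{\mathbf h^1}:\mathbf c^1]$, the same determinant up to sign. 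Hence the two top forms agree up to sign on $T_\rho R^*(S,G)$, which is the assertion.

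The main obstacle is not any single computation but the consistent bookkeeping of the conventions linking the three descriptions: Weil's translation identification of $T_\rho R(S,G)$ with $Z^1$ (and verifying that bi-invariance makes $(\theta_G)^k$ the \emph{unit} form of $\mathbf c^1$, not merely proportional to it), the normalization of the push-forward $\theta_{\mathcal O(\rho)}=(f_\rho)_*\theta_G$ across the finite covering $G\to G/Z(G)$, and matching the combinatorial torsion denominator $[\widetilde{\mathbf b^1}:\mathbf c^0]$ with the orbit factor. Since every sign is absorbed into the global $\pm$, the care required lies entirely in these identifications rather than in tracking orientations.
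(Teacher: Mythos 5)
Your proposal is correct and follows essentially the same route as the paper: the same graph model with one vertex and $k$ edges, the same choice $\widetilde{\mathbf b^1}=\mathbf c^0$ collapsing the torsion to a single determinant, and the same two normalizations $(\theta_G)^{k}(\wedge\mathbf c^1)=1$ and $\theta_{\mathcal O(\rho)}(\wedge\,\delta\mathbf c^0)=\pm1$ via the identification $df_\rho(e)=-\delta$ and the push-forward through $G\to G/Z(G)$. No gaps.
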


\begin{proof}

We use a graph $\mathcal G$ with one vertex and $k$ edges to compute the torsion of $S$.
 The Reidemeister torsion of this graph is
$
\tor(\mathcal G,\operatorname{Ad}\rho, \mathbf{h})= \pm { [{\mathbf b^1}\sqcup \widetilde{\mathbf{h}}: \mathbf c^1 ]}/{ 
[\widetilde{\mathbf b^1} : \mathbf c^0  ] }.
$
If we make the choice $\widetilde{\mathbf b^1}=\mathbf c^0$, which is a basis for $\mathfrak{g}$, then 
$$
\tor(\mathcal G,\operatorname{Ad}\rho, \mathbf{h})= { [\delta{\mathbf c^0}\sqcup \widetilde{\mathbf{h}}: \mathbf c^1 ]}
= \big(\wedge\delta{\mathbf c^0}\wedge \widetilde{\mathbf{h}}\big) \slash {\wedge\mathbf{c}^1} .
$$
We identify the 1-cells with the generators of $F_k$, so that every element in $\mathbf c^1$ is viewed as a tangent vector to the 
variety of representations, and $\mathbf c^1$ has volume one,
\[
(\theta_G)^{k} ( \wedge \mathbf c^1) = 1
\]
because we started with an $\mathcal{B}$-orthonormal basis for $\mathfrak{g}$. Thus 
\begin{equation}
 \label{eqn:torform}
 (\theta_G)^{k} 
(\wedge\delta{\mathbf c^0}\wedge \widetilde{\mathbf{h}})=\pm 
\tor(\mathcal G,\operatorname{Ad}\rho, \mathbf{h})= \pm  \Omega_S(\wedge \mathbf{h}) .
\end{equation}
As $\delta{\mathbf c^0}$ is a basis of the tangent space to the orbit $\pi_*(\delta{\mathbf c^0})=0$. Moreover, using 
$\pi_*( \widetilde{ \mathbf{h}})= \mathbf{h}$:
\begin{equation}
\label{eqn:ThetaOmega}
(\theta_{\mathcal{O}(\rho)}\wedge \pi^* \Omega_S) (\wedge \delta{\mathbf c^0}\wedge \widetilde{\mathbf{h}})
=
\theta_{\mathcal{O}(\rho)}(\wedge \delta{\mathbf c^0}) \Omega_S(\wedge \mathbf{h}).
\end{equation}
By \eqref{eqn:torform} and \eqref{eqn:ThetaOmega}, to conclude the proof of the lemma we claim that 
$\theta_{\mathcal{O}(\rho)}(\wedge \delta{\mathbf c^0})=1$.
For that purpose,  
we use the canonical identification 
$T_\rho \mathcal{O}(\rho) \cong B^1(\pi_1(S);\Ad\rho)$. 
Using this identification, 
the tangent map of the orbit map $f_\rho\colon G\to\mathcal{O}(\rho)$ at $e\in G$,
$
df_\rho(e)\colon \mathfrak{g} \to T_\rho \mathcal{O}(\rho),
$
corresponds to
$$ 
df_\rho(e)(X) = -\delta(X),
$$ 
where 
$\delta\colon \mathfrak{g}\to B^1(\pi_1(S);\Ad\rho)$ 
denotes the coboundary operator
\[
 \delta(X) (\gamma) = \operatorname{Ad}_{\rho(\gamma)}(X)-X,\qquad\text{ for }
 \gamma\in\pi_1(S).
\]
%
%
%
%
%
%
 Therefore for the basis $ \delta\mathbf{c}^0$ of $B^1(\pi_1(S);\Ad\rho)$
 we obtain by \eqref{eq:push_forward}:
 \begin{equation*}
\theta_{\mathcal{O}(\rho)} (\wedge \delta\mathbf{c}^0) =
\theta_{\mathcal{O}(\rho)} (\wedge df_\rho(e)\mathbf{c}^0) =
\pm \theta_{G}(\wedge \mathbf{c}^0)= 1.
 \end{equation*}
%
%
%
%
%
%
%
%
%
%
%
%
%
%
%
%
%
%
This concludes the proof of  the claim and the lemma.
\end{proof}

 \subsection{The nondegenerate pairing}
 Consider $K'$ the cell decomposition dual to $K$: for each $i$-dimensional cell $e^i_j$ of $K$ there exists a 
dual  $(2-i)$-dimensional cell  $f^{2-i}_j$ of the dual complex $(K',\partial K')$. The complex $C^*(K',\partial K';\mathbb Z)$ yields the relative cohomology of the pair $(S,\partial S)$.
 This can be generalized to cohomology with coefficients.
 If $C_*(\widetilde K;\ZZ)$ denotes the simplicial chain complex on the universal covering, recall from \eqref{eqn:chaincplx} that
 \[
 C^*(K;\Ad\rho)=\hom_{\pi_1(S)}( C_*(\widetilde K;\ZZ), \mathfrak{g}),
\]
and we similarly  define
\[
  C^*(K',\partial K';\Ad\rho)=\hom_{\pi_1(S)}( C_*(\widetilde K',\partial\widetilde K' ;\ZZ), \mathfrak{g})
 \]
where $\pi_1(S)$ acts on $ \mathfrak{g}$ by the adjoint representation.

Following Milnor \cite{MilnorDuality}, there is a paring 
\[
[ .\,,.\,]\colon C_i(\widetilde K;\ZZ)\times C_{2-i}(\widetilde K',\partial\widetilde K' ;\ZZ)\to \ZZ\pi_1(S)
\]
defined by 
\[
[c,c'] := \sum_{\gamma\in\pi_1(S)} (c\cdot \gamma c')\,\gamma .
\]
Here ``$\cdot$'' denotes the intersection number in the universal covering.
The main properties of this paring are that for $\eta\in\ZZ\pi_1(S)$ we have:
\begin{equation}\label{eq:milnor}
[\eta c,c'] = \eta [c,c'], \quad [ c,\eta c'] =  [c,c'] \bar \eta
\quad\text{ and }\quad
[\partial c,c'] = \pm [c,\partial c'] .
\end{equation}
Here  the bar $\overline{ .\,\vphantom{b}}\,\colon\ZZ\pi_1(S)\to \ZZ\pi_1(S) $  denotes the anti-involution  that extends 
$\ZZ$-linearly the anti-morphism of
$\pi_1(S)$ that maps $\gamma\in\pi_1(S)$ to 
$\gamma^{-1}$.
Notice that the sign $\pm$ in equation~\eqref{eq:milnor} depends only on the dimension of the chains.

For each $i$-dimensional cell $e^i_j$ we fix a lift $\tilde e^i_j$ to $\widetilde K$. Also, we chose a
$(2-i)$-dimensional cell $\tilde f^{2-i}_j$ which projects to $f^{2-i}_j$.
By replacing $\tilde f^{2-i}_j$ by a translate, we can assume that
\[
\tilde e^i_j \cdot  \tilde f^{2-i}_k = \delta_{jk} .
\]

We obtain, for each $i$-chain $c\in C_i(\widetilde K ; \ZZ)$ and each
$(2-i)$-chain $c'\in C_{2-i}(\widetilde K',\partial\widetilde K' ; \ZZ)$ that
\[
c = \sum_j [ c, \tilde f^{2-i}_j] \,\tilde e^i_j
\quad\text{ and }\quad
c' = \sum_j \overline{[ \tilde e^i_j, c']}\, \tilde f^{2-i}_j\,.
\]

Given $\alpha\in C^i(K;\Ad\rho) $ and $\alpha'\in C^{2-i-1}(K',\partial K';\Ad\rho)  $ the formula
\[
 (\alpha,\alpha')  \mapsto \sum_{j} 
 \B\big( \alpha(\tilde{e}^i_j ) ,  \alpha'(  \tilde{f}^{2-i}_j)  \big)
\]
defines a  nondegenerate pairing
\begin{equation}
\label{eqn:intersectionchains}
 \langle\cdot,\cdot\rangle\colon C^i(K;\Ad\rho)\times C^{2-i}(K',\partial K';\Ad\rho)  \to  \CC\,  .
\end{equation}

By using equation \eqref{eq:milnor}, it is easy to see that
this pairing satisfies
\begin{equation}
\label{eqn:intersectionboundary}
 \langle \delta \alpha, \alpha'\rangle = \pm \langle  \alpha, \delta \alpha'\rangle\,,
\end{equation}
and therefore it induces a non-singular pairing in cohomology
\begin{equation}\label{eq:pairing-cohom}
\langle\cdot,\cdot\rangle\colon   H^1(S;\Ad\rho)\times H^{1}(S,\partial S;\Ad\rho)  \to  \CC .\\
\end{equation}

Given a basis ${\mathbf{h }}=\{ h_i\}_i $ of $  H^1(S;\Ad\rho) $ and 
${\mathbf{h }}'=\{ h_i'\}_i $ a basis of $  H^1(S,\partial S;\Ad\rho) $, 
 we introduce the notation
\begin{equation}
\langle \wedge{\mathbf{h }}, \wedge{\mathbf{h }} '\rangle := \det\big( \langle h_i, h_j' \rangle_{ij} \big)
\end{equation}
which is the natural extension of the pairing \eqref{eq:pairing-cohom} to 
\[
\bigwedge^d  H^1(S;\Ad\rho) \bigotimes
 \bigwedge^d  H^1(S,\partial S;\Ad\rho) \to\mathbb{C}\,,
 \] 
where $d=-\chi(S)\,\dim G$.

 \subsection{The duality formula}
Let $\rho\in R(\pi_1(S),G)$ be a representation. 
\begin{Proposition}[Duality formula]
\label{Proposition:duality}
Let  ${\mathbf{h }}=\{ h_i\}_i $ be a basis for $  H^1(S;\Ad\rho) $, and let
${\mathbf{h }}'=\{ h_i'\}_i $ be a basis for $  H^1(S,\partial S;\Ad\rho) $. 
Assume that the cohomology groups 
$H^k(S;\Ad\rho)$ and  $H^k(S,\partial S;\Ad\rho)$ 
vanish in dimension $k=0, 2$. Then
\[
 \tor(S,\operatorname{Ad}\rho, {\mathbf{h }}) \, 
 \tor(S,\partial S,\operatorname{Ad}\rho, {\mathbf{h }}') =
 \pm \langle \wedge {\mathbf{h }}, \wedge {\mathbf{h }} ' \rangle
 \]
\end{Proposition}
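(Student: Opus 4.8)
The plan is to reduce the statement to a linear-algebra computation involving the two cochain complexes $C^*(K;\Ad\rho)$ and $C^*(K',\partial K';\Ad\rho)$, exploiting that the pairing \eqref{eqn:intersectionchains} makes them dual to each other. Write $\mathbf d^{2-i}$ for the geometric basis of $C^{2-i}(K',\partial K';\Ad\rho)$. The first input is that, because $\{m_1,\dots,m_{\dG}\}$ is $\B$-orthonormal and the dual cells are normalized by $\tilde e^i_j\cdot\tilde f^{2-i}_k=\delta_{jk}$, the geometric bases are \emph{exactly} dual: $\langle\mathbf c^i,\mathbf d^{2-i}\rangle$ is the identity matrix. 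As an immediate consequence, for any bases $\mathbf g$ of $C^i(K)$ and $\mathbf g'$ of $C^{2-i}(K',\partial K')$ one has the product formula $\det\big(\langle\mathbf g,\mathbf g'\rangle\big)=[\mathbf g:\mathbf c^i]\,[\mathbf g':\mathbf d^{2-i}]$. The second input is the adjunction \eqref{eqn:intersectionboundary}, relating the coboundary operator on one complex to that on the other. These two facts are all that is needed.

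First I would analyze the degree-one pairing $\langle\cdot,\cdot\rangle\colon C^1(K)\times C^1(K',\partial K')\to\CC$. On $C^1(K;\Ad\rho)$ I choose the adapted basis $\mathbf g_C=\mathbf b^1\sqcup\widetilde{\mathbf h^1}\sqcup\widetilde{\mathbf b^2}$ (coboundaries, then representatives in $Z^1$ of the homology basis, then lifts of $\mathbf b^2$), which is precisely the basis appearing in the numerator of $\tor(S,\Ad\rho,\mathbf h)$; on $C^1(K',\partial K';\Ad\rho)$ I choose the analogous basis $\mathbf g_D=\mathbf e^1\sqcup\widetilde{\mathbf h}'\sqcup\widetilde{\mathbf e^2}$, where $\mathbf e^1,\mathbf e^2$ are coboundary bases and $\widetilde{\mathbf h}'$ represents $\mathbf h'$. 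The adjunction \eqref{eqn:intersectionboundary} yields the annihilator relations $(B^1)^\perp=Z^1(K',\partial K')$ and $(Z^1(K))^\perp=\operatorname{im}\delta'$ (the latter by double annihilator), which force the matrix $M=\langle\mathbf g_C,\mathbf g_D\rangle$ to be block anti-triangular. Hence $\det M=\pm\det A\,\det P\,\det B$, where $P=\langle\widetilde{\mathbf h^1},\widetilde{\mathbf h}'\rangle$ is the matrix of the cohomology pairing \eqref{eq:pairing-cohom}, so that $\det P=\langle\wedge\mathbf h,\wedge\mathbf h'\rangle$, while $A$ and $B$ are the two off-diagonal blocks pairing coboundaries against lifts.

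The last step identifies $\det A$ and $\det B$ with the denominators of the two torsions. Using $\mathbf b^1=\delta\widetilde{\mathbf b^1}$ and $\mathbf e^2=\delta'\widetilde{\mathbf e^2}$ together with adjunction, the block $A=\langle\mathbf b^1,\widetilde{\mathbf e^2}\rangle$ equals $\pm\langle\widetilde{\mathbf b^1},\mathbf e^2\rangle$, a pairing in degrees $(0,2)$; the product formula then gives $\det A=\pm[\widetilde{\mathbf b^1}:\mathbf c^0]\,[\mathbf e^2:\mathbf d^2]$. Symmetrically, passing to degrees $(2,0)$, $\det B=\pm[\mathbf b^2:\mathbf c^2]\,[\widetilde{\mathbf e^1}:\mathbf d^0]$. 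Now I multiply the two torsions, substitute $\det M=[\mathbf g_C:\mathbf c^1]\,[\mathbf g_D:\mathbf d^1]$ for the product of the numerators, and observe that $\det A\,\det B$ cancels exactly the four denominator determinants of $\tor(S,\Ad\rho,\mathbf h)\,\tor(S,\partial S,\Ad\rho,\mathbf h')$. What remains is $\pm\det P=\pm\langle\wedge\mathbf h,\wedge\mathbf h'\rangle$, which is the asserted identity.

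I expect the only genuine difficulty to be bookkeeping: verifying the annihilator relations that produce the block anti-triangular shape of $M$, and checking that the lifts have matching cardinalities so that $A$, $B$ and $P$ are all square (the equality of the two homology dimensions being guaranteed by nondegeneracy of \eqref{eq:pairing-cohom}). Since the identity is claimed only up to sign, every permutation sign and every sign from the adjunction \eqref{eqn:intersectionboundary} can be absorbed into the ambient $\pm$, which removes what would otherwise be the most delicate part of the argument.
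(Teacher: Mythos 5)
Your proof is correct and follows essentially the same route as the paper: dual geometric bases make the chain-level pairing the identity, the adjunction \eqref{eqn:intersectionboundary} forces the vanishing blocks, and the resulting block factorization cancels the torsion denominators leaving $\pm\langle\wedge\mathbf h,\wedge\mathbf h'\rangle$. The only (cosmetic) difference is that the paper additionally chooses the lifts $\widetilde{\mathbf b^2}$, $\widetilde{\mathbf h}$ orthogonal to the other complex's cohomology representatives to make the matrix block anti-diagonal, whereas you correctly observe that anti-triangularity, which comes for free from the annihilator relations, already suffices.
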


This is E.~Witten's generalization of the duality formula of W.~Franz and J.~Milnor.
We reproduce the proof for completeness. 
In Witten's article \cite{Witten} the proof of this particular formula is only given 
in the closed case, and Milnor \cite{MilnorDuality} and Franz \cite{Franz37} consider only the acyclic case.

\begin{proof}
 We chose the geometric basis of $C^i(K;\Ad\rho)$ and $C^{2-i}(K',\partial K';\Ad\rho)$
 to be dual to each other, by choosing dual lifts of the cells and 
 a $\B$-orthonormal basis of the Lie algebra $\mathfrak{g}$.
 In this way, the matrix of the intersection form 
 \eqref{eqn:intersectionchains} with respect the geometric basis
 is the identity, in particular its determinant is 1:  $\langle \wedge\mathbf{c}^i , \wedge(\mathbf{c}^{2-i})'\rangle=1$.
 Thus we view the product of torsions in the statement of the proposition as three changes 
 of basis, one for each intersection form:
 \begin{multline}
 \label{eqn:producttorsions}
   \tor(S,\operatorname{Ad}\rho, {\mathbf{h }}) \, 
 \tor(S,\partial S,\operatorname{Ad}\rho, {\mathbf{h }}')\\
= \pm
   \tor(S,\operatorname{Ad}\rho, {\mathbf{h }}) \, 
 \tor(S,\partial S,\operatorname{Ad}\rho, {\mathbf{h }}') \frac{\langle \wedge\mathbf{c}^1 , \wedge(\mathbf{c}^1)'\rangle}{\langle \wedge\mathbf{c}^0 , \wedge(\mathbf{c}^2)'\rangle
 \langle \wedge\mathbf{c}^2 , \wedge(\mathbf{c}^0)'\rangle}
 \\
 = \pm
  \frac{[  \widetilde{\mathbf b^2}\sqcup {\mathbf b^1}\sqcup  \widetilde{\mathbf h} : \mathbf{c}^1   ]\,
  [  \widetilde{(\mathbf b^2)'}\sqcup {(\mathbf b^1)'}\sqcup  \widetilde{\mathbf h'} : (\mathbf{c}^1)'   ]
}{ [  \widetilde{\mathbf b^1}  : \mathbf{c}^0 ] [  {\mathbf b^2}  : \mathbf{c}^2 ] \,
 [  (\widetilde{\mathbf b^1})'  : (\mathbf{c}^0)' ] [  ({\mathbf b^2})'  : (\mathbf{c}^2)' ]}
 \frac{\langle \wedge\mathbf{c}^1 , \wedge(\mathbf{c}^1)'\rangle}{\langle \wedge\mathbf{c}^0 , \wedge(\mathbf{c}^2)'\rangle
 \langle \wedge\mathbf{c}^2 , \wedge(\mathbf{c}^0)'\rangle}
 \\
= \pm \frac{
\langle  \wedge \widetilde{\mathbf b^2} \wedge {\mathbf b^1} \wedge \widetilde{\mathbf h},  
\wedge  \widetilde{(\mathbf b^2)'} \wedge (\mathbf b^1)'\wedge  \widetilde{\mathbf h'}
\rangle
 }{
  \langle \wedge \widetilde{\mathbf b^1} ,\wedge  ({\mathbf b^2})' \rangle 
 \langle \wedge  {\mathbf b^2} , \wedge (\widetilde{\mathbf b^1})' \rangle 
 } .
 \end{multline}
Next, following Witten, we may chose the lift of the coboundaries to be orthogonal 
to the lift of the cohomology of the other complex: 
$$
 \langle   \widetilde{ h_i},   \widetilde{( b^2_j)'} \rangle = 
 \langle  \widetilde{ b^2_i}, \widetilde{ h_j'}\rangle  =0.
$$
%
In addition, by direct application of \eqref{eqn:intersectionboundary}:
$$
\langle   { b^1_i},   ( b^1_j)'\rangle   = 
	   \langle  { b^1_i},  \widetilde{h_j'}\rangle =
	    \langle \widetilde {h_i}, ( b^1_j)' \rangle=0.
$$
Thus the numerator in  \eqref{eqn:producttorsions}  is the determinant of a matrix with some vanishing blocks, 
and \eqref{eqn:producttorsions} becomes:
\begin{equation}
\label{eqn:productsimpler}
\pm \frac{  
  \langle \wedge  \widetilde{\mathbf b^2}, \wedge (\mathbf b^1)'\rangle  
   \langle \wedge  {\mathbf b^1},\wedge  \widetilde{(\mathbf b^2)'}\rangle  
   \langle \wedge {\mathbf h}, \wedge \mathbf h'\rangle 
 }{
  \langle\wedge  \widetilde{\mathbf b^1} ,\wedge  ({\mathbf b^2})' \rangle 
 \langle \wedge  {\mathbf b^2} ,\wedge   (\widetilde{\mathbf b^1})' \rangle 
 }.
\end{equation}
Finally, since $\delta \widetilde{\mathbf b^i}= \mathbf b^i$ and 
 $\delta \widetilde{(\mathbf b^i)'}= (\mathbf b^i)'$, 
 $
  \langle \wedge  \widetilde{\mathbf b^2}, \wedge (\mathbf b^1)'\rangle =\pm \langle \wedge  {\mathbf b^2} ,\wedge  (\widetilde{\mathbf b^1})' \rangle 
 $
and 
$
  \langle \wedge  {\mathbf b^1},\wedge \widetilde{(\mathbf b^2)'} \rangle  =  
  \pm \langle \wedge  \widetilde{\mathbf b^1},\wedge {(\mathbf b^2)'} \rangle 
$, by \eqref{eqn:intersectionboundary}. Hence \eqref{eqn:productsimpler} equals
$\pm \langle  \wedge \mathbf h, \wedge \mathbf h'\rangle $, concluding the proof.
\end{proof}

\begin{Remark}
Notice that the proof generalizes in any dimension, after changing the product by a quotient in the odd dimensional case,
and taking care of the intersection product in all cohomology groups.
\end{Remark}

\section{Symplectic form and volume forms}
\label{Section:forms} 

\subsection{The symplectic form on the relative variety of representations}

For a good and $\partial$-regular representation $\rho_0$, 
the tangent space to $\Rg(S,\partial S,G)_{\rho_0}$ is the kernel of the map $i\colon H^1(S;\Ad\rho) \to H^1(\partial S;\Ad\rho)$
induced by inclusion (Proposition~\ref{prop:relsmooth}). The long exact sequence in cohomology of the pair is:
\begin{equation*}
  0\to   
{H^0(\partial S;\Ad\rho)}\overset\beta\to  
 {H^1(S,\partial S;\Ad\rho)}
 \overset  j\to {H^1(S;\Ad\rho)}\overset i\to  {H^1(\partial S;\Ad\rho)}
  \to 0
\end{equation*}
For $a,b\in \ker(i)$, we define
\begin{equation}
 \label{eqn:omega}
\omega(a,b)= \langle  \widetilde a,b \rangle,
= \langle a, \widetilde b \rangle
 \end{equation}
where $\widetilde a, \widetilde  b\in  H^1(S,\partial S;\Ad\rho) $ satisfy
$j(\widetilde a)=a$, $j(\widetilde  b)=b$.
This form is well defined (independent of the lift), because $i$ and $\beta$ are dual maps with respect to the pairing \eqref{eq:pairing-cohom}, that is 
$\langle \beta(\cdot),\cdot\rangle= \langle \cdot, i(\cdot)\rangle$.
Moreover we have:

\begin{Theorem}[\cite{GoldmanAdvances,GHJW,LawtonP}] 
Assume that $\rho_0$ is a good and $\partial$-regular. Then
the form $\omega$ is symplectic on 
$\Rg(S,\partial S,G)_{\rho_0}$.
 \end{Theorem}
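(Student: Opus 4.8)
The plan is to check, at each $[\rho]\in\Rg(S,\partial S,G)_{\rho_0}$, the three defining properties of a symplectic form for the $2$-form $\omega$ of \eqref{eqn:omega}: skew-symmetry, nondegeneracy and closedness. Throughout I use that, by Proposition~\ref{prop:relsmooth}, the space is a smooth complex manifold with $T_{[\rho]}\Rg(S,\partial S,G)_{\rho_0}=\ker(i)=\operatorname{im}(j)$, and that the long exact sequence of the pair $(S,\partial S)$ displayed before the statement is available because $\rho_0$ is good and $\partial$-regular (so that $H^0(S;\Ad\rho)=0$ and $H^2(S,\partial S;\Ad\rho)=0$). Bilinearity of $\omega$ is immediate, and independence of the lifts $\widetilde a,\widetilde b$ is the adjointness of $i$ and $\beta$ already recorded before the statement.

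For skew-symmetry I would introduce the auxiliary bilinear form $P\colon H^1(S,\partial S;\Ad\rho)\times H^1(S,\partial S;\Ad\rho)\to\CC$, $P(x,y)=\langle j(x),y\rangle$, evaluated through the Poincar\'e--Lefschetz pairing \eqref{eq:pairing-cohom}, so that $\omega(a,b)=P(\widetilde a,\widetilde b)$ for any lifts. The point is then that $P$ is antisymmetric: the pairing \eqref{eq:pairing-cohom} is built from Milnor's intersection pairing on the oriented surface together with the symmetric form $\B$, and the geometric intersection of one-dimensional classes on an oriented surface is antisymmetric, so the symmetry of $\B$ forces $P(x,y)=-P(y,x)$ (at chain level this is the sign in \eqref{eq:milnor}/\eqref{eqn:intersectionboundary}, i.e.\ graded commutativity of the cup product in degree one). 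Hence $\omega(a,b)=-\omega(b,a)$.

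Nondegeneracy is a short diagram chase from the exact sequence. Suppose $a\in\ker(i)$ satisfies $\omega(a,b)=0$ for all $b\in\ker(i)$. Writing $\omega(a,b)=\langle a,\widetilde b\rangle$ and using $\ker(i)=\operatorname{im}(j)$, every $z\in H^1(S,\partial S;\Ad\rho)$ occurs as a lift of $b=j(z)\in\ker(i)$, so the hypothesis gives $\langle a,z\rangle=0$ for all such $z$. Since \eqref{eq:pairing-cohom} is a nonsingular pairing, $a=0$. This is exactly where the good and $\partial$-regular hypotheses are used, through the surjectivity of $i$ and injectivity of $\beta$ supplied by Proposition~\ref{prop:relsmooth}.

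The genuinely hard point is closedness, $d\omega=0$, which is not pointwise linear algebra but controls the variation of $\omega$ over the moduli space; I do not expect to establish it from the definition. Instead I would appeal to the established theory: $\Rg(S,\partial S,G)_{\rho_0}$ is a symplectic leaf of Goldman's Poisson structure on $\Rg(S,G)$ --- the leaves being cut out by fixing the conjugacy classes of the peripheral holonomies, whose invariant functions are the Casimirs --- so closedness follows from the Jacobi identity for that bracket; equivalently one realizes $\omega$ by (quasi-Hamiltonian) reduction of an extended moduli space. I would cite \cite{GoldmanAdvances} and \cite{GHJW} for this and \cite{LawtonP} for the precise holomorphic statement needed here. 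Thus the main obstacle is closedness, handled by reduction to those references, while skew-symmetry and nondegeneracy are proved directly as above.
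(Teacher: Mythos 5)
Your proposal is correct and follows essentially the same route as the paper: the paper likewise notes that bilinearity and antisymmetry are immediate from the construction, that nondegeneracy follows from Poincar\'e duality, and that the only deep point is $d\omega=0$, which it delegates to Goldman for closed surfaces, to Guruprasad--Huebschmann--Jeffrey--Weinstein for surfaces with boundary, and to Lawton--Porti for the holomorphic case. Your more explicit verification of skew-symmetry and nondegeneracy simply fills in details the paper leaves to the reader.
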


 The fact that $\omega$ is bilinear and alternating is clear from construction, non-degeneracy follows from
 Poincar\'e duality, and the deep result is to prove $d\omega=0$.
 When $S$ is closed this was proved by Goldman in ~\cite{GoldmanAdvances}.
  When $\partial S\neq\emptyset$, the result  with real coefficients is due to  Guruprasad,  Huebschmann,  Jeffrey,  and  Weinstein \cite{GHJW}, and in
 \cite{LawtonP} Lawton explains why it works in the complex case.
 

 \subsection{Sign refined Reidemeister torsion for the circle}
 \label{sec:sgn_ref_tor}
 
Let $V$ be a finite dimensional real or complex vector space, and 
$$
\varphi\colon\pi_1(S^1)\to \mathrm{SL}(V)
$$ 
be a representation. 
In what follows we use the refined torsion with sign due to Turaev, that we denote $\TOR(S^1,\varphi ,\mathfrak{o},\mathbf{u},\mathbf{v})$ 
\cite[\S3]{TuraevKnotTheory}. 
This torsion depends on the choice of an orientation $\mathfrak{o}$ in cohomology with constant coefficients of $S^1$
and the choice of respective basis $\mathbf{u}$ for $H^0(S^1;\varphi)$ and $\mathbf{v}$ for $H^1(S^1;\varphi)$. For a circle $S^1$, 
the choice of an orientation determines 
a fundamental class, hence an orientation in homology. 

We start with a cell decomposition $K$ of $S^1$, with $i$-cells $e^i_1,\ldots, e^i_a$,  $i=0,1$, and a 
(real or complex) basis $\{m_1,\ldots,m_k\}$ for the vector space $V$. The geometric basis 
for $C^i(K;\varphi)$
is then 
$\mathbf{c}^i=\{(\tilde e^i_1)^*\otimes m_1,(\tilde e^i_1)^*\otimes m_2,\ldots, (\tilde e^i_a)^*\otimes m_k \}$. 
As before, let $B^1=\operatorname{Im} (\delta\colon C^0(K;\varphi)\to  C^1(K;\varphi))$ denote the coboundary space and chose 
$\mathbf{b}^1$ as basis for $B^1$ and lift it to  $\widetilde{\mathbf{b}^1}$ in $ C^0(K;\varphi) $. Consider also
$\widetilde {\mathbf{v}}\subset C^1(K;\varphi) $ a representative of $\mathbf{v}$ and similarly 
$\widetilde {\mathbf{u}}\subset C^0(K;\varphi) $ for $\mathbf{u}$.
Then we define the torsion: 
$$
\operatorname{tor}(S^1, \varphi, \mathbf{u},\mathbf{v},\mathbf{c}^0,\mathbf{c}^1)=
\frac{[\widetilde {\mathbf{v}}\sqcup\mathbf{b}^1 : \mathbf{c}^1 ]}
{[ \widetilde{\mathbf{b}^1}\sqcup \widetilde {\mathbf{u}} : \mathbf{c}^0 ]}
\in \CC^*.
$$
Notice that there is no sign indeterminacy, because  we include $ \mathbf{c}^i $ in the notation. In fact
sign indeterminacy comes from changing the order  or 
the orientation of the cells of $K$. The sign is not affected by the choice of a basis for $V$, because $\chi(S^1)=0$. 

Following \cite[\S3]{TuraevKnotTheory} we consider $\alpha_i=\sum_{l=0}^i\dim C^l(K;\varphi)$, 
$\beta_i=\sum_{l=0}^i\dim H^l(S^1;\varphi)$ and $N=\sum_{i\geq 0}\alpha_i\beta_i$. We define
$$
\operatorname{Tor}(S^1, \varphi, \mathbf{u},\mathbf{v},\mathbf{c}^0,\mathbf{c}^1)=
(-1)^N\operatorname{tor}(S^1, \varphi, \mathbf{u},\mathbf{v},\mathbf{c}^0,\mathbf{c}^1).
$$
This quantity is invariant under subdivision of the cells of $K$, but it still depends on their
ordering and orientation. To make this quantity invariant, Turaev introduces the notion of 
cohomology orientation, i.e. an orientation of the $\RR$-vector space $H^0(S^1;\RR)\oplus H^1(S^1;\RR)$.
We consider a geometric basis the complex with trivial coefficients $C^i(K; \RR)$, 
${c}^i=\{(e^i_1)^*,\ldots, (e^i_a)^*\}$, with the same ordering and orientation of cells. 
We chose any basis $h^i$ of $H^i(S^1;\RR)$ that yield the orientation $\mathfrak{o}$.

\begin{Definition}
 The \emph{sign determined} torsion is
 $$
\operatorname{TOR}(S^1,\varphi,\mathfrak{o},\mathbf{u},\mathbf{v})=
\operatorname{Tor}(S^1, \varphi, \mathbf{u},\mathbf{v},\mathbf{c}^0,\mathbf{c}^1)\cdot
\operatorname{sgn} \left(\operatorname{Tor}(S^1, 1, h^0,h^1,{c}^0,{c}^1)\right)^{\dim\varphi}
$$
\end{Definition}

Let $-\mathfrak{o}$ denote the homology orientation opposite to $\mathfrak{o}$. It is straightforward
from construction that
\begin{equation}\label{eq:change_or}
\operatorname{TOR}(S^1,\varphi,-\mathfrak{o},\mathbf{u},\mathbf{v})=
(-1)^{\dim\varphi}\operatorname{TOR}(S^1,\varphi,\mathfrak{o},\mathbf{u},\mathbf{v})
\end{equation}
In particular, we do not need the homology orientation when $\dim\varphi$ is even.
For a circle $S^1$, the choice of an orientation determines 
a fundamental class, hence an orientation in cohomology. 

Let $\varphi_i\colon\pi_1(S^1)\to\mathrm{SL}(V_i)$ be representations into finite dimensional vector spaces, for $i=1,2$.
Then $H^*(S^1;\varphi_1\oplus\varphi_2)\cong H^*(S^1;\varphi_1)\oplus H^*(S^1;\varphi_2)$. Let $\mathbf{u}_i$ and 
$\mathbf{v}_i$ denote basis for $H^0(S^1;\varphi_i)$ and $H^1(S^1;\varphi_i)$ respectively. The following lemma reduces to an elementary calculation:

\begin{Lemma}\label{lem:TOR_mult}
 Let $\varphi_i\colon\pi_1(S^1)\to\mathrm{SL}(V_i)$ be representations into finite dimensional vector spaces,
 for $i=1,2$. Then
\begin{multline*}
 \operatorname{TOR}(S^1,\varphi_1\oplus\varphi_2,\mathfrak{o},\mathbf{u}_1\times\{0\}\sqcup\{0\}\times\mathbf{u}_2,\mathbf{v}_1\times\{0\}\sqcup\{0\}\times\mathbf{v}_2 )
 \\ =\operatorname{TOR}(S^1,\varphi_1,\mathfrak{o},\mathbf{u}_1,\mathbf{v}_1) \cdot
\operatorname{TOR}(S^1,\varphi_2,\mathfrak{o},\mathbf{u}_2,\mathbf{v}_2).
\end{multline*}
\end{Lemma}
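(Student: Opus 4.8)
The plan is to unwind the definition of $\operatorname{TOR}$ and reduce the claimed multiplicativity to the corresponding multiplicativity of the unsigned torsion $\operatorname{tor}$, together with a bookkeeping check on the sign factors. First I would fix a single cell decomposition $K$ of $S^1$ (say one $0$-cell and one $1$-cell, with fixed lifts and orientations) to be used for all three torsions simultaneously; since $\operatorname{TOR}$ is subdivision-invariant this costs nothing, and using the same $K$ throughout is what makes the block-decomposition arguments below transparent. With this $K$ fixed, the chain complex splits as $C^*(K;\varphi_1\oplus\varphi_2)\cong C^*(K;\varphi_1)\oplus C^*(K;\varphi_2)$, and the geometric basis on the left is (up to the ordering implicit in the statement) the concatenation of the geometric bases for $\varphi_1$ and $\varphi_2$.

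The core computation is the multiplicativity of the unsigned torsion
\[
\operatorname{tor}(S^1,\varphi,\mathbf u,\mathbf v,\mathbf c^0,\mathbf c^1)=\frac{[\,\widetilde{\mathbf v}\sqcup\mathbf b^1:\mathbf c^1\,]}{[\,\widetilde{\mathbf b^1}\sqcup\widetilde{\mathbf u}:\mathbf c^0\,]}.
\]
Here I would observe that the coboundary space $B^1$, its chosen basis $\mathbf b^1$, and its lift $\widetilde{\mathbf b^1}$ all split as direct sums of the corresponding objects for $\varphi_1$ and $\varphi_2$, as do the representatives $\widetilde{\mathbf u}=\widetilde{\mathbf u_1}\sqcup\widetilde{\mathbf u_2}$ and $\widetilde{\mathbf v}=\widetilde{\mathbf v_1}\sqcup\widetilde{\mathbf v_2}$. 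Consequently each change-of-basis matrix defining a determinant $[\,\cdot:\cdot\,]$ is, after reordering the basis vectors to group the $\varphi_1$-part before the $\varphi_2$-part, block-diagonal. The determinant of a block-diagonal matrix is the product of the block determinants, and the reordering contributes only a sign; in the numerator and denominator these reordering signs are governed by the dimensions $\dim C^i(K;\varphi_j)$, which are equal in degrees $0$ and $1$ (since $\chi(S^1)=0$), so they cancel between numerator and denominator. This yields $\operatorname{tor}(\varphi_1\oplus\varphi_2)=\operatorname{tor}(\varphi_1)\cdot\operatorname{tor}(\varphi_2)$ with no residual sign, which is exactly the promised ``elementary calculation''.

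It remains to match the two sign-correction layers. For the factor $(-1)^N$ defining $\operatorname{Tor}$, I would check that $N$ is additive under the direct sum: writing $\alpha_i,\beta_i$ for $\varphi_1\oplus\varphi_2$ in terms of the analogous quantities for $\varphi_1$ and $\varphi_2$, one has $\alpha_i=\alpha_i^{(1)}+\alpha_i^{(2)}$ and $\beta_i=\beta_i^{(1)}+\beta_i^{(2)}$, so $N=\sum_i\alpha_i\beta_i$ expands into the sum $N_1+N_2$ plus cross terms $\sum_i(\alpha_i^{(1)}\beta_i^{(2)}+\alpha_i^{(2)}\beta_i^{(1)})$; here I expect the main (though still minor) obstacle, namely verifying that these cross terms are even, which again follows from $\dim C^i(K;\varphi_j)$ being independent of $i$ modulo the relevant parities. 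Finally, the trivial-coefficient correction $\operatorname{sgn}(\operatorname{Tor}(S^1,1,h^0,h^1,c^0,c^1))$ is raised to the power $\dim\varphi$, and since $\dim(\varphi_1\oplus\varphi_2)=\dim\varphi_1+\dim\varphi_2$ this factor is manifestly multiplicative. Combining the multiplicativity of $\operatorname{tor}$, the additivity of $N$ modulo $2$, and the additivity of $\dim\varphi$ in the exponent gives the stated identity for $\operatorname{TOR}$.
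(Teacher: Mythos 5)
The paper gives no proof of this lemma (it is merely asserted to ``reduce to an elementary calculation''), so the only issue is whether your calculation is correct. Your framework --- fix a cell decomposition with one cell in each dimension, split every cochain group, coboundary space and cohomology basis into $\varphi_1$- and $\varphi_2$-blocks, and track the three layers ($\operatorname{tor}$, the factor $(-1)^N$, and the trivial-coefficient sign) separately --- is the right one. But two of your intermediate sign claims are false, and the lemma only survives because the two errors cancel against each other.

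Concretely, write $k_j=\dim V_j$ and $h_j=\dim H^0(S^1;\varphi_j)=\dim H^1(S^1;\varphi_j)$. In the numerator $[\widetilde{\mathbf v}\sqcup\mathbf b^1:\mathbf c^1]$, passing from the ordering $\widetilde{\mathbf v_1},\widetilde{\mathbf v_2},\mathbf b^1_{(1)},\mathbf b^1_{(2)}$ to the block-diagonal ordering moves the block $\mathbf b^1_{(1)}$ (of size $k_1-h_1$) past $\widetilde{\mathbf v_2}$ (of size $h_2$), contributing $(-1)^{(k_1-h_1)h_2}$; the analogous reordering in the denominator contributes $(-1)^{(k_2-h_2)h_1}$. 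Their quotient is $(-1)^{k_1h_2+k_2h_1}$, which is \emph{not} $+1$ in general (take $\varphi_1$ the trivial one-dimensional representation and $\varphi_2$ a nontrivial unipotent in $\mathrm{SL}_2$: then $k_1h_2+k_2h_1=3$). These signs are not ``governed by $\dim C^i$'' alone, and they do not cancel between numerator and denominator; so $\operatorname{tor}$ is multiplicative only up to this sign. Likewise, the cross terms in $N$ are $\sum_i(\alpha_i^{(1)}\beta_i^{(2)}+\alpha_i^{(2)}\beta_i^{(1)})=5(k_1h_2+k_2h_1)\equiv k_1h_2+k_2h_1\pmod 2$, which is not even in the same example; the equality $\dim C^0=\dim C^1$ does not control these products, which involve the $h_j$. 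What saves the statement is that the residual sign from the reordering and the residual sign from the cross terms in $N$ are \emph{both} $(-1)^{k_1h_2+k_2h_1}$, so they cancel in $\operatorname{Tor}=(-1)^N\operatorname{tor}$. The last layer, $\operatorname{sgn}(\operatorname{Tor}(S^1,1,h^0,h^1,c^0,c^1))^{\dim\varphi}$, is multiplicative for the reason you give. So the identity is true and your setup proves it, but as written your argument asserts two false cancellations; the proof needs to carry the sign $(-1)^{k_1h_2+k_2h_1}$ explicitly through both computations and observe that the product of the two residual signs is $+1$.
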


\subsection{An holomorphic volume form on $\R^{reg}(S^1,G)$}
\label{sec:peripheral}

As in the introduction we let $G$ denote a simply-connected, semisimple, complex, linear Lie group, $d = \dim G$, and $r =\operatorname{rk} G$.

\begin{Definition}\label{def:regular_rep}
We call a representation $\rho\colon\pi_1(S^1)\to G$ \emph{regular} if the image of the generator 
of $\pi_1(S^1)$ is a regular element ${g}\in G$.
The set of conjugacy classes of regular representations is denoted by $\R^{reg}(S^1,G)$.
\end{Definition}

Let $\rho\colon\pi_1(S^1)\to G$ be regular.  Then $\dim H^0(S^1;\Ad\rho)=r$, because $H^0(S^1;\Ad\rho)\cong\mathfrak{g}^{\Ad \rho}$. 
As the Euler characteristic of $S^1$ vanishes, $\dim H^1(S^1;\Ad\rho)=r $.
 Furthermore, since $G$ is simply connected,  we have  $H^1(S^1;\Ad\rho)\cong T_{[\rho]} \R^{reg}(S^1,G)$ (Corollary~\ref{coro:Steinberg}).

By Poincar\'e duality, the pairing
 $$
\langle \cdot,\cdot \rangle \colon H^0(S^1;\Ad\rho)\times H^1(S^1;\Ad\rho) \to H^1(S^1;\CC )\cong \CC 
 $$
 is non degenerate.

In the next lemma we use the refined torsion with sign due to Turaev (see Section~\ref{sec:sgn_ref_tor}).
By \eqref{eq:change_or}  changing  the orientation of $S^1$ changes the torsion
$\TOR(S^1,\operatorname{Ad}\rho, \mathfrak{o},\mathbf{u},\mathbf{v} )$ by a factor
$(-1)^{\dG}= (-1)^r$, as well as $\langle \wedge\mathbf v,  \wedge\mathbf u \rangle$
by the same factor.

Let $G_\RR$ denote the compact real form of the semisimple complex linear group $G$.
We will assume that the restriction of the nondegenerate symmetric bilinear $G$-invariant  
form $\B$ on the Lie algebra to $\mathfrak{g}_\RR$ is positive definite.
This means that $\B$ is a negative multiple of the Killing form. In what follows we will denote by
$\Ad_\RR\colon G_\RR\to \operatorname{Aut}(\mathfrak{g}_\RR)$ the restriction of $\Ad$ to the real form $G_\RR$.
 \begin{Lemma}
 \label{lemma:independent} 
If $\rho\colon \pi_1(S^1)\to G$ is a regular representation, and if $\mathbf u$ and $\mathbf v$ are bases of 
$H^0(S^1;\Ad\rho)$ and $H^1(S^1;\Ad\rho)$ respectively, then
the product 
\[
\TOR(S^1,\Ad\rho,\mathfrak{o}, \mathbf{u},\mathbf{v} )
 \, \langle \wedge\mathbf v,  \wedge\mathbf u \rangle
 \] 
 is independent of $\mathbf u$.
  \end{Lemma}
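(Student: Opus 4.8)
The plan is to isolate the dependence on $\mathbf u$ of each of the two factors and check that the two scalings cancel exactly. Since $\rho$ is regular, $\dim H^0(S^1;\Ad\rho)=r=\dim H^1(S^1;\Ad\rho)$, so both $\mathbf u$ and $\mathbf v$ are $r$-element bases and the expression makes sense; moreover any other basis $\mathbf u'$ of $H^0(S^1;\Ad\rho)$ is obtained from $\mathbf u$ by a matrix $A\in\operatorname{GL}_r(\CC)$, and it suffices to show that replacing $\mathbf u$ by $\mathbf u'$ leaves the product unchanged. Because $\mathbf v$ and $\mathfrak o$ are kept fixed throughout, I only have to understand how $\TOR(S^1,\Ad\rho,\mathfrak o,\mathbf u,\mathbf v)$ and $\langle\wedge\mathbf v,\wedge\mathbf u\rangle$ each rescale under $\mathbf u\mapsto\mathbf u'=A\mathbf u$.

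First I would track the torsion. In the defining formula $\tor(S^1,\Ad\rho,\mathbf u,\mathbf v,\mathbf c^0,\mathbf c^1)=[\widetilde{\mathbf v}\sqcup\mathbf b^1:\mathbf c^1]\big/[\widetilde{\mathbf b^1}\sqcup\widetilde{\mathbf u}:\mathbf c^0]$ the basis $\mathbf u$ enters only through its lift $\widetilde{\mathbf u}$ in the denominator. For the circle $B^0(S^1;\Ad\rho)=0$, so $H^0=Z^0$ and one may take $\widetilde{\mathbf u}=\mathbf u$ as a genuine cocycle; hence $\widetilde{\mathbf u'}=A\widetilde{\mathbf u}$ \emph{exactly}, with no coboundary correction. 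Replacing $\widetilde{\mathbf u}$ by $A\widetilde{\mathbf u}$ multiplies the block of the matrix expressing $\widetilde{\mathbf b^1}\sqcup\widetilde{\mathbf u}$ in the geometric basis $\mathbf c^0$ by $A$, so $[\widetilde{\mathbf b^1}\sqcup\widetilde{\mathbf u}:\mathbf c^0]$ is multiplied by $\det(A)$ and $\tor$ is divided by $\det(A)$. The sign-refinement factors $(-1)^N$ and $\operatorname{sgn}\bigl(\operatorname{Tor}(S^1,1,h^0,h^1,c^0,c^1)\bigr)^{\dim\varphi}$ are built from the trivial-coefficient complex and from the dimension data $\alpha_i,\beta_i$ only, so they do not involve $\mathbf u$; therefore $\TOR(S^1,\Ad\rho,\mathfrak o,\mathbf u',\mathbf v)=\det(A)^{-1}\,\TOR(S^1,\Ad\rho,\mathfrak o,\mathbf u,\mathbf v)$.

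Next I would track the pairing. Since $\langle\wedge\mathbf v,\wedge\mathbf u\rangle=\det\bigl(\langle v_i,u_j\rangle\bigr)$ is the determinant of the matrix of the Poincar\'e duality pairing $H^1(S^1;\Ad\rho)\times H^0(S^1;\Ad\rho)\to\CC$, it is alternating and multilinear in the entries of $\mathbf u$; replacing $\mathbf u$ by $\mathbf u'=A\mathbf u$ therefore multiplies it by $\det(A)$. Combining the two computations, the product $\TOR(S^1,\Ad\rho,\mathfrak o,\mathbf u,\mathbf v)\,\langle\wedge\mathbf v,\wedge\mathbf u\rangle$ is multiplied by $\det(A)^{-1}\cdot\det(A)=1$, which proves independence of $\mathbf u$. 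The only point requiring genuine care, and the one I would verify most explicitly, is that the lift $\widetilde{\mathbf u}$ rescales by exactly $A$ with no extra coboundary term, which is precisely where $B^0(S^1;\Ad\rho)=0$ is used; everything else is bookkeeping of determinants together with the observation that the Turaev sign is $\mathbf u$-independent.
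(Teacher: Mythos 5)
Your proposal is correct and follows essentially the same route as the paper: both arguments track how each factor rescales under a change of basis $\mathbf u\mapsto A\mathbf u$ (the torsion by $\det(A)^{-1}$ via the denominator $[\widetilde{\mathbf b^1}\sqcup\widetilde{\mathbf u}:\mathbf c^0]$, the pairing by $\det(A)$ via multilinearity of the determinant) and observe the cancellation. The paper simply records the simultaneous change in both $\mathbf u$ and $\mathbf v$, obtaining the residual factor $[\mathbf v':\mathbf v]^2$, but the $\mathbf u$-independence is exactly the computation you give.
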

 
  \begin{Lemma}
 \label{lemma:positive} 
If $\rho\colon \pi_1(S^1)\to G_\RR$ is a regular representation
and if $\mathbf u$ and $\mathbf v$
are bases of $H^0(S^1;\Ad_\RR\rho) $,
 and $H^1(S^1;\Ad_\RR\rho) $ respectively, then 
\[
  \TOR(S^1,\Ad_\RR\rho, \mathfrak{o},\mathbf{u},\mathbf{v} )
 \, \langle \wedge\mathbf v,  \wedge\mathbf u \rangle >0\, .
\]
 \end{Lemma}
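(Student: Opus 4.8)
The plan is to exploit that the compact real form makes $\Ad_\RR\rho$ orthogonal, to reduce to a single convenient choice of data, and then to split off an acyclic summand. Write $g=\rho(1)$ and $A=\Ad_\RR g\in\operatorname{Aut}(\mathfrak{g}_\RR)$; since $\B|_{\mathfrak{g}_\RR}$ is positive definite and $\Ad$-invariant, $A$ is orthogonal. I would first observe that the sign of the product $P=\TOR(S^1,\Ad_\RR\rho,\mathfrak{o},\mathbf u,\mathbf v)\,\langle\wedge\mathbf v,\wedge\mathbf u\rangle$ is insensitive to every choice: independence of $\mathbf u$ is Lemma~\ref{lemma:independent}; replacing $\mathbf v$ by $\mathbf v P$ multiplies $\TOR$ and $\langle\wedge\mathbf v,\wedge\mathbf u\rangle$ each by $\det P$, hence $P$ by $(\det P)^2>0$; and by \eqref{eq:change_or} reversing $\mathfrak{o}$ multiplies both factors by $(-1)^r$, leaving $P$ unchanged. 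Thus it suffices to prove $P>0$ for one well-chosen basis and orientation.

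Next comes the orthogonal decomposition. Because $A$ is orthogonal, $\mathfrak{g}_\RR=\mathfrak{z}\oplus\mathfrak{m}$ with $\mathfrak{z}=\ker(A-I)$, $\mathfrak{m}=\operatorname{Im}(A-I)=\mathfrak{z}^\perp$, and $A=\mathrm{id}_{\mathfrak{z}}\oplus A_{\mathfrak{m}}$ where $A_{\mathfrak{m}}$ has no eigenvalue $1$. Then $H^0(S^1;\Ad_\RR\rho)=\mathfrak{z}$, the natural map $\mathfrak{z}\hookrightarrow\mathfrak{g}_\RR\to H^1(S^1;\Ad_\RR\rho)$ is an isomorphism, and the summand $\mathfrak{m}$ is acyclic. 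I would then check that under these identifications the pairing $H^0\times H^1\to\CC$ is just $\B|_{\mathfrak{z}}$ (using $\B(\mathfrak{z},\mathfrak{m})=0$ and invariance of $\B$), which is positive definite; hence for a $\B$-orthonormal basis $\mathbf e$ of $\mathfrak{z}$, taking $\mathbf u=\mathbf v=\mathbf e$ gives $\langle\wedge\mathbf v,\wedge\mathbf u\rangle=\det(\B(e_i,e_j))=1$.

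For the torsion I would use multiplicativity (Lemma~\ref{lem:TOR_mult}) for $\Ad_\RR\rho=\varphi_{\mathfrak{z}}\oplus\varphi_{\mathfrak{m}}$, both of which land in $\mathrm{SL}$ since $\det A=1$ on the connected semisimple $G_\RR$, so that $\TOR=\TOR(\varphi_{\mathfrak{z}})\cdot\TOR(\varphi_{\mathfrak{m}})$ for the chosen bases. The acyclic factor equals $\det(A_{\mathfrak{m}}-I)=\det(I-A_{\mathfrak{m}})$ (equal since $\dim\mathfrak{m}=d-r$ is even), and this is positive: the eigenvalues of the orthogonal $A_{\mathfrak{m}}$ are unit complex numbers $\neq 1$ occurring in conjugate pairs, with $-1$ of even multiplicity, and each pair contributes $|1-e^{i\theta}|^2>0$. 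The trivial factor $\TOR(\varphi_{\mathfrak{z}})$ is a pure sign that I would evaluate straight from Turaev's definition, tracking $N=\sum_i\alpha_i\beta_i$ and the correction $\operatorname{sgn}(\operatorname{Tor}(S^1,1,h^0,h^1,c^0,c^1))^{\dim\varphi_{\mathfrak{z}}}$, and which works out to $+1$ for the orientation normalizing the refining factor. Combining, $P=\TOR(\varphi_{\mathfrak{z}})\,\det(A_{\mathfrak{m}}-I)\cdot 1>0$.

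The main obstacle is precisely this last bookkeeping: confirming that Turaev's sign refinement (the factors $(-1)^N$ and $\operatorname{sgn}(\cdots)^{\dim\varphi}$) together with the orientation convention implicit in the Poincar\'e-duality identification $H^1(S^1;\CC)\cong\CC$ contribute no stray minus sign. Everything else is automatic from compactness, since positivity of $\det(I-A_{\mathfrak{m}})$ and of $\B|_{\mathfrak{z}}$ requires no effort; so the proof really hinges on pinning down these discrete signs, which is exactly why I would first reduce to a single orientation and a single adapted basis where they are trivial to compute.
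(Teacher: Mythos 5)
Your proposal is correct and follows the same overall strategy as the paper's proof: reduce to a single adapted basis via Lemma~\ref{lemma:independent} and the quadratic dependence on $\mathbf v$ from \eqref{eqn:change}, split $\mathfrak{g}_\RR$ orthogonally into the fixed subspace of $\Ad_\RR g$ (which, by regularity, is a Cartan subalgebra $\mathfrak h$ with $\B|_{\mathfrak h}$ positive definite) and its acyclic complement, and apply multiplicativity of $\TOR$ (Lemma~\ref{lem:TOR_mult}). You differ in two places. For the acyclic factor you prove $\det\bigl(\Ad_\RR(g)-\operatorname{Id}\bigr)|_{\mathfrak h^\perp}>0$ by elementary spectral theory of orthogonal maps (unit eigenvalues in conjugate pairs, $-1$ of even multiplicity since the determinant is $1$); the paper instead invokes the Weyl function identity $\det(\Ad(g)-\operatorname{Id})|_{\mathfrak h^\perp}=\Delta_G(g)\Delta_G(g^{-1})=\vert\Delta_G(g)\vert^2$. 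Your route is more self-contained and equally valid. For the Cartan factor you assert the sign is $+1$ but defer the bookkeeping; the paper carries it out in \eqref{eqn:TORCartan}: with one cell per dimension one gets $N\equiv r\bmod 2$, hence $\operatorname{Tor}=(-1)^r$, and the refining factor $\operatorname{sgn}\bigl(\operatorname{Tor}(S^1,1,h^0,h^1,c^0,c^1)\bigr)^{r}=(-1)^r$ cancels it, giving $\TOR=1$, while $\langle\wedge\mathbf c^1,\wedge\mathbf c^0\rangle=1$ for a $\B$-orthonormal basis of $\mathfrak h$ — confirming the value you claimed. That computation is the one piece you should actually write down, since, as you yourself note, it is exactly where a stray sign could hide.
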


\begin{proof}[Proof of Lemma~\ref{lemma:independent}] Let $\mathbf{u}$ and $\mathbf{u}'$ be bases for $H^0(S^1;\Ad\rho)$, and  $\mathbf{v}$ and $\mathbf{v'}$, for $H^1(S^1;\Ad\rho)$.
We change bases by means of the following formulas:
$$
  \TOR(S^1,\operatorname{Ad}\rho,\mathfrak{o}, \mathbf{u'},\mathbf{v'} )= 
    \TOR(S^1,\operatorname{Ad}\rho, \mathfrak{o}, \mathbf{u},\mathbf{v} )
    \frac{[ \mathbf v': \mathbf  v]}{[  \mathbf u':   \mathbf u]}
$$
and
$$
\langle  \wedge \mathbf v',  \wedge  \mathbf u' \rangle = \langle \wedge \mathbf v,  \wedge \mathbf u \rangle
{[ \mathbf v':  \mathbf v]}{[   \mathbf u':  \mathbf u]}\, .
$$
Hence
\begin{equation}
 \label{eqn:change}
  \TOR(S^1,\operatorname{Ad}\rho, \mathfrak{o}, \mathbf{u'},\mathbf{v'} ) \langle  \wedge \mathbf v',  \wedge  \mathbf u' \rangle
  = 
    \TOR(S^1,\operatorname{Ad}\rho, \mathfrak{o}, \mathbf{u},\mathbf{v} ) \langle \wedge \mathbf v,  \wedge \mathbf u \rangle
[ \mathbf v': \mathbf  v]^2 .
\end{equation}
This proves independence of $\mathbf u$.
\end{proof}

\begin{proof}[Proof of Lemma~\ref{lemma:positive}]
We are assuming that the image of $\rho$ is contained in  
the compact real form, $\rho(\pi_1(S))\subset G_\RR$.
By  \eqref{eqn:change} in  the proof of Lemma~\ref{lemma:independent}, the sign is independent
 of $\mathbf v$. By regularity, $H^0(S^1;\Ad_\RR\rho)\subset \mathfrak{g}_\RR$ is a Cartan subalgebra $\mathfrak h$,
and $\B$ restricted to $\mathfrak h$ is positive definite. Hence we may chose an ${\RR}$-basis of $\mathfrak{g}_{\RR}$ compatible with
the orthogonal decomposition $\mathfrak{g}_{\RR}=\mathfrak{h} \perp \mathfrak{h}^\perp$. 
This is also a decomposition of
$\pi_1(S^1)$-modules, and by Lemma~\ref{lem:TOR_mult} the torsion decomposes accordingly as a product of torsions.

We compute the torsion of  $\mathfrak{h} $ first. Since the adjoint action of $H$ on $\mathfrak{h} $
is trivial, we have natural isomorphisms
\begin{equation}
 \label{eqn:isos}
 H^1(S^1;\mathfrak{h})\cong H^1(S^1,\mathbb{R})\otimes \mathfrak{h}\quad\textrm{ and }\quad
  H^0(S^1;\mathfrak{h})\cong H^0(S^1,\mathbb{R})\otimes \mathfrak{h}.
\end{equation}
We chose a cell decomposition of $S^1$ with a single (positively oriented) cell in each dimension.
In particular, as  the adjoint action of $H$ on $\mathfrak{h} $
is trivial, the boundary operator 
$ \delta\colon C^0(K;\mathfrak{h})\to  C^1(K;\mathfrak{h}) $ vanishes.
Chose a $\B$-orthonormal basis for $\mathfrak{h}$, this provides geometric basis $\mathbf c^1$ and $\mathbf c^0$, and since 
$\delta=0$, those are also representatives of basis in cohomology.
By choosing those bases ($\mathbf u=\mathbf c^0$ and $\mathbf v=\mathbf c^1$), 
$$
\mathrm{tor}(S^1, \Ad\rho\vert_{\mathfrak{h}},  \mathbf c^0,\mathbf c^1,  \mathbf c^0,\mathbf c^1)=1.
$$
Following the construction in Section~\ref{sec:sgn_ref_tor}, we compute $\alpha_0=\beta_0=r$ and 
$\alpha_1=\beta_1=2r\equiv 0 \bmod 2$. Thus $N \equiv r^2\equiv r\bmod 2$ and 
$$
\mathrm{Tor}(S^1, \Ad\rho\vert_{\mathfrak{h}},  \mathbf c^0,\mathbf c^1,  \mathbf c^0,\mathbf c^1)=(-1)^r.
$$
As the torsion for the trivial representation corresponds to the case $r=1$, $\mathrm{Tor}$
for the trivial representation is $-1$ and 
\begin{equation}
\label{eqn:TORCartan}
\TOR(S^1, \Ad\rho\vert_{\mathfrak{h}},\mathfrak{o},  \mathbf c^1,\mathbf c^0)=(-1)^r\cdot\mathrm{sgn}(-1)^r= 1.
\end{equation}
Also, by construction, $\langle \wedge{\mathbf c^1},\wedge{\mathbf c^0}\rangle=1$. 

Next we compute the torsion of  $\mathfrak{h}^\perp $. We have $H^*(S^1;\mathfrak{h}^\perp)=0$ and, since 
$\dim\mathfrak{h}^\perp$ is even,
$$
 \TOR(S^1 ,\operatorname{Ad}\rho\vert_{ \mathfrak{h}^\perp},\mathfrak{o} )
 =\operatorname{tor}(S^1 ,\operatorname{Ad}\rho\vert_{ \mathfrak{h}^\perp},\mathbf{c}^0,\mathbf{c}^1)
 =\det  (\Ad_\RR(g)-\operatorname{Id})\vert_{ \mathfrak{h}^\perp },
$$
where $g\in G$ is the image of a generator of $\pi_1(S^1)$. 
Notice that, as $\dim \mathfrak{h}^\perp$ is even, the sign is independent of the cohomology orientation. 

Let $\Delta_G$ be the Weyl function \cite{GoodWall}. Then
$$
\det  (\operatorname{Ad}(g)-\operatorname{Id})\vert_{ \mathfrak{h}^\perp }= \Delta_G(g) \Delta_G(g^{-1})= \vert \Delta_G(g) \vert ^2>0
$$
(see \cite[(7.47)]{GoodWall} for details). This finishes the proof of the lemma.
\end{proof}

\begin{Definition}
\label{Def:peripheralform}
 Let $\rho\colon\pi_1 (S^1)\to G$ be a regular representation.
 The \emph{form}
  $$
\nu\colon \bigwedge^r H^1(S^1;\Ad\rho)\to \CC 
 $$
is defined by the formula
\begin{equation}
   \label{eqn:peripheral}
\nu (\wedge\mathbf v)=\pm \sqrt{ \TOR(S^1,\operatorname{Ad}\rho, \mathfrak{o} , \mathbf{u},\mathbf{v} )
 \, \langle \wedge\mathbf v,  \wedge\mathbf u \rangle } 
\end{equation}
for any  basis $\mathbf u$ of  $H^1(S^1;\Ad\rho)$. 
(By Lemma~\ref{lemma:independent}, it is independent of $\mathbf u $.)
\end{Definition}

We are interested in understanding $\nu$ as a differential form on $\R^{reg}(S^1, G)$ for $G$ simply connected.
Recall from \S\ref{subsection:Steinberg} that when $G$ is simply connected, the Steinberg map has coordinates the fundamental characters 
$(\sigma_1,\ldots,\sigma_r)\colon G\to\CC^r$.

\begin{Proposition}
 \label{prop:nudsigma}
 For $G$ simply connected, there exists a constant $C\in\CC^*$  and a choice of sign for $\nu$ such that  
$$
\nu=   C\, d\sigma_1\wedge\dots\wedge d\sigma_{r}.
$$ 
\end{Proposition}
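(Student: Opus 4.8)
The plan is to compare $\nu$ with the reference form $\omega_0:=d\sigma_1\wedge\cdots\wedge d\sigma_r$. By Theorem~\ref{thm:Steinberg} and Corollary~\ref{coro:Steinberg} the Steinberg map is an isomorphism $\R^{reg}(S^1,G)\xrightarrow{\cong}\CC^r$, so $\omega_0$ is the pullback of the standard holomorphic volume form and vanishes nowhere. Although $\nu$ carries a sign ambiguity, its square $\nu^2$ does not: by Lemma~\ref{lemma:independent} it is a well-defined holomorphic section of $\big(\bigwedge^r T^*\R^{reg}(S^1,G)\big)^{\otimes 2}$, given on any basis $\mathbf v$ of $H^1(S^1;\Ad\rho)$ by $\TOR(S^1,\Ad\rho,\mathfrak o,\mathbf u,\mathbf v)\,\langle\wedge\mathbf v,\wedge\mathbf u\rangle$. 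Hence $F:=\nu^2/\omega_0^2$ is a single holomorphic function on $\CC^r$, and it suffices to prove that $F$ is a nonzero constant; a choice of branch of $\sqrt F$ then provides the sign of $\nu$ and the constant $C=\sqrt F\in\CC^*$ with $\nu=C\,\omega_0$.

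First I would evaluate $F$ on the dense subset of regular semisimple classes. Fixing a maximal torus $T=\exp(\mathfrak h)$ and a regular $g\in T$, the centralizer is $T$, so $H^0(S^1;\Ad\rho)\cong\mathfrak h\cong H^1(S^1;\Ad\rho)$. Choosing a $\B$-orthonormal basis $\{X_1,\dots,X_r\}$ of $\mathfrak h$ and taking $\mathbf u=\mathbf v=\{X_j\}$, exactly the cell-level computation of Lemma~\ref{lemma:positive}, now read over $\CC$ so that only the algebraic identities survive and positivity is irrelevant, splits $\mathfrak g=\mathfrak h\perp\mathfrak h^\perp$ via Lemma~\ref{lem:TOR_mult}: the $\mathfrak h$-block contributes a fixed sign to $\TOR$ together with $\langle\wedge\mathbf v,\wedge\mathbf u\rangle=\det(\B(X_i,X_j))=1$, while the acyclic $\mathfrak h^\perp$-block contributes $\det(\Ad(g)-\mathrm{Id})|_{\mathfrak h^\perp}=\Delta_G(g)\Delta_G(g^{-1})$. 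Thus $\nu^2=\pm\,\Delta_G(g)\Delta_G(g^{-1})$ on $T^{reg}$. On the other hand, under the identification of $H^1$ with the tangent space to $T/W$ at $[g]$ the basis $\{X_j\}$ records motions of $g$ inside $T$, so $\omega_0(\wedge\mathbf v)=\det\big(d\sigma_i(X_j)\big)$ is the Jacobian of $\sigma|_T\colon T\to\CC^r$.

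The key step, which I expect to be the main obstacle, is the classical identity that this Jacobian is a nonzero constant multiple of the Weyl function, $\det\big(d\sigma_i(X_j)\big)=c\,\Delta_G(g)$ with $c\in\CC^*$ (see \cite{Steinberg,GoodWall}); here simple connectedness is essential, so that $\CC[T]^W$ is the polynomial ring on $\sigma_1,\dots,\sigma_r$. Since $\Delta_G(g^{-1})=(-1)^{|\Phi^+|}\Delta_G(g)$, both $\nu^2$ and $\omega_0^2$ are constant multiples of $\Delta_G(g)^2$, the Weyl functions cancel, and $F=\pm(-1)^{|\Phi^+|}/c^2$ on the regular semisimple locus.

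This locus is dense in $\R^{reg}(S^1,G)\cong\CC^r$ and $F$ is globally holomorphic, so by the identity principle $F$ equals this nonzero constant everywhere. Consequently $\nu^2=F\,\omega_0^2$ with $F\in\CC^*$, and fixing one square root of $F$ as the sign of $\nu$ gives $\nu=C\,d\sigma_1\wedge\cdots\wedge d\sigma_r$ with $C=\sqrt F\in\CC^*$. If one prefers to avoid quoting the explicit Jacobian formula, the same conclusion follows by noting that $\det\big(d\sigma_i(X_j)\big)$ is $W$-anti-invariant and vanishes to first order on each wall $\zeta^\alpha=1$, so that $F$ is a $W$-invariant unit on $T$ and hence constant.
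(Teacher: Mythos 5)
Your argument is correct in outline, but it takes a genuinely different route from the paper's. The paper works along Steinberg's section $s\colon\CC^r\to G^{reg}$: it invokes the Quillen--Suslin theorem to trivialize the algebraic bundle of centralizers $p\mapsto\mathfrak{g}^{\Ad s(p)}$ over $\CC^r$, shows that the two factors $\langle s_*(-),\wedge\mathbf{u}\rangle$ and $\TOR(S^1,\Ad s,\mathfrak{o},\mathbf{u},s_*(-))$ are then \emph{polynomial} $(r,0)$-forms on $\CC^r$, and concludes that each, being nowhere vanishing, is a constant multiple of $dz_1\wedge\cdots\wedge dz_r$ --- no computation on the torus and no Weyl function are needed. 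You instead evaluate $\nu^2/\omega_0^2$ on the regular semisimple locus by splitting $\mathfrak{g}=\mathfrak{h}\perp\mathfrak{h}^\perp$ and quoting the classical Jacobian identity $\det\bigl(d\sigma_i(X_j)\bigr)=c\,\Delta_G$, then extend by the identity principle. Your route buys an explicit handle on the constant: it is essentially how the paper later computes $C$ for $\SLn$ in Proposition~\ref{prop:thetaSL} (note that there the reduction to semisimple $g$ is justified by citing Proposition~\ref{prop:nudsigma}, so one must avoid circularity; your use of the identity principle instead of that citation does avoid it). The price is importing the Jacobian/Weyl-function identity, whose self-contained justification (anti-invariance plus simple vanishing on the walls, using simple connectedness so that $\CC[T]^W$ is polynomial on the $\sigma_i|_T$) you only sketch. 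The one step you should make explicit is that $\nu^2$ is holomorphic on \emph{all} of $\R^{reg}(S^1,G)$, including at non-semisimple regular classes: Lemma~\ref{lemma:independent} gives independence of $\mathbf{u}$ but says nothing about regularity in $[\rho]$. For that you need a local holomorphic (or algebraic) frame $\mathbf{u}(p)$ of the bundle of invariants along Steinberg's section --- the constant rank theorem suffices locally, while the paper extracts a global frame from Quillen--Suslin. With that point supplied, your proof is complete.
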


\begin{proof}
Using Steinberg's section $s\colon \CC^r\to G^{reg}$ (Theorem~\ref{thm:Steinberg}), consider for each $p\in\CC^r$ the subagebra
$\mathfrak{g}^{\Ad  s(p) }$
of elements fixed by $ {\Ad  s(p)} $.
By  the constant rank theorem this defines an algebraic vector bundle
$$
\mathfrak{g}^{\Ad\circ s} \to E(s)\to \CC^r.
$$
Since algebraic vector bundles over $\CC^r$ are trivial \cite{Quillen,Suslin},
there is a trivialization $\mathbf u=(u_1,\ldots,u_r)\colon \CC^r\to  E(s)$, so that
$\{ u_1(p),\ldots,u_r(p)\}$ is a basis for $\mathfrak{g}^{\Ad  s(p)}$, for each $p\in\CC^r$.
By the identifications, 
$ T_{[s( p)]}\R^ {reg}(S^ 1, G)  \cong H^1(S^1,\Ad s(p))$  (Corollary~\ref{coro:Steinberg}), and the identification
$\mathfrak{g}^{\Ad  s(p)} \cong H^0(S^1,\Ad s(p))  $,
we have  two $(r,0)$-forms on $\CC^r$:
\begin{equation}
 \label{eqn:forms}
  \langle s_*( - ), \wedge \mathbf{u} \rangle \quad\textrm{ and } \quad
 \TOR(S^1,\operatorname{Ad}s, \mathfrak{o}  , \mathbf{u},s_*( - ) ) . 
\end{equation}
We claim that these forms are both algebraic. Assuming the claim,
they are a polynomial multiple of $d z_1\wedge\cdots\wedge d z_r$, for $(z_1,\ldots,z_r)$ the standard coordinate system for $\CC^r$.
Since they vanish nowhere in $\CC^r$, 
both forms in  \eqref{eqn:forms} are a constant multiple of
$d z_1\wedge\cdots\wedge d z_r$. Viewed as 
 as forms on 
$\R^ {reg}(S^ 1, G)$, they are both a constant multiple of $d\sigma_1\wedge\cdots \wedge d\sigma_r$ and the proposition follows,
once we have shown the claim.


To prove that the forms in \eqref{eqn:forms} are algebraic, 
use a CW-decomposition $K$ of $S^1$ with a $1$ and a $0$-cell, so that the groups of cochains
$C^i(K,\Ad s(p))$, for $i=0,1$,  are naturally identified with $\mathfrak{g}$. 
We also have a natural isomorphism
$
 \left( R_{ s(p) ^{-1}}\right)_*\colon T_{s(p)} G\to \mathfrak{g}
$,
which is precisely  the tangent map to righ multiplicatiuon by $s(p) ^{-1}  $.
This identification maps $ s_*(\partial_{z_i})$ at $p\in\CC^n$ to
$$
v_i(p) = \left(R_{ s(p) ^{-1}}\right)_* \left(\frac{\partial s_{\phantom{i}}}{\partial {z_i}}(p) \right)   \in\mathfrak{g},  
$$
which is a map algebraic on $p\in\CC^r$. Hence the intersection product is
$$
\langle s_*(\partial_{z_1}\wedge\cdots\wedge \partial_{z_r} ),\wedge \mathbf{u}  \rangle=
\det (\langle s_*(\partial_{z_i}), u_j\rangle_{ij})= \det(\mathcal{B}( v_i, u_j)_{ij}),
$$
which is polynomial on $p\in\CC^r$.

To show that the torsion is algebraic,  using again triviality of algebraic bundles on $\CC^r$,
 complete $\mathbf u$  to a   section 
of the trivial bundle
$(u_1,\dotsc,u_r,\dotsc, u_{d})\colon \CC^r \to\mathfrak{g} $.
Setting  $\tilde {\mathbf b}^1=\{u_{r+1},\dotsc,u_{d}\}$, then $\mathbf u(p)\sqcup \tilde {\mathbf b}^1(p)$
is a basis for $\mathfrak{g}$, for each $p\in \CC$. We view  $\mathbf u(p)\sqcup \tilde {\mathbf b}^1(p)$ as a basis for $ C^0( K,\Ad s(p))$, so  
that $\mathbf u(p)$ projects to a basis for  $H^0(S^1,\Ad s(p))$, for every $p\in\CC^r$.
Fix $\mathbf c^0=\mathbf c^1$ a basis for $\mathfrak{g}$. By construction:
$$
   \TOR(S^1,\operatorname{Ad}s, \mathfrak{o}  , \mathbf{u},s_*(\partial_{z_1}\wedge\cdots\wedge \partial_{z_r} ) )    = 
   \pm \frac {[\mathbf v \sqcup \partial   \tilde {\mathbf b}^1    : \mathbf c^1]}{[ \mathbf u \sqcup \tilde {\mathbf b}^1  : \mathbf c^0]} ,
$$
where the sign depends on the orientation in homology, but it is constant on $p$. 
Thus this  is a quotient of algebraic polynomial functions on $\CC^r$, but since it is defined everywhere, it is polynomial.
\end{proof}

\subsection{Witten's formula}
 
Let $\rho\colon\pi_1(S)\to G$ be a good $\partial$-regular representation.
Let $\nu_i$ denote the peripheral form of the $i$-th component of $\partial S$ (Definition~\ref{Def:peripheralform}),
and let $\omega$ denote the symplectic form of the relative character variety \eqref{eqn:omega}.
We aim to prove Theorem~\ref{Thm:vol}, namely, that
  $
  \Omega_{\pi_1(S)}=\pm \frac{1}{n!}\omega^{n}\wedge \nu_1\wedge\cdots\wedge\nu_\bS
  $.

\begin{proof}[Proof of Theorem~\ref{Thm:vol}]

We apply the duality formula (Proposition~\ref{Proposition:duality}) 
and the formula of the torsion for the long exact sequence  of the pair, Equation~\eqref{eqn:torsLEX}
below. 
For this purpose 
we discuss the bases in cohomology. Start with 
$\mathbf u$ a basis for $H^0(\partial S;\Ad\rho)$. If $\beta$ denotes the connecting map of the long exact sequence, 
then complete $\beta(\mathbf u)$ to a basis
for $H^1(S,\partial S;\Ad\rho)$: $\beta (\mathbf u) \sqcup \widetilde{\mathbf h} $. 
Next  we chose $\mathbf v$ a basis for $H^1(\partial S;\Ad\rho)$ that we lift to
$\widetilde{\mathbf v }$ by $i$, and if we set $ j( \widetilde{\mathbf h} )=\mathbf h $, 
then  $\mathbf h\sqcup \widetilde{\mathbf v }$ is a basis for
 $H^1(S;\Ad\rho)$ (and  $\mathbf h$ is a basis for $\ker(i)=\operatorname{Im}(j)$). The bases are organized as follows:
\begin{equation}
\label{eqn:LEX}
 0\to \underset { {\mathbf u}} {H^0(\partial S;\Ad\rho)}
 \overset\beta\to\underset { {\beta(\mathbf u)}\sqcup \widetilde{\mathbf h} } {H^1(S,\partial S;\Ad\rho)}
 \overset  j\to \underset{ {\mathbf h}\sqcup \widetilde{ \mathbf v } }{H^1(S;\Ad\rho)}\overset i\to \underset{ \mathbf v}{H^1(\partial S;\Ad\rho)}\to 0
\end{equation}
As the bases have been chosen compatible with the maps of the long exact sequence,  the product formula for the torsion \cite{MilnorBullAMS} gives:
 \begin{equation}
 \label{eqn:torsLEX}
     \operatorname{tor}(S,\operatorname{Ad}\rho, \mathbf h \sqcup \widetilde{\mathbf v} )
  =\pm
  \operatorname{tor}(S,\partial S,\operatorname{Ad}\rho, \beta (\mathbf u) \sqcup \widetilde{\mathbf h})
 \operatorname{tor}(\partial S,\operatorname{Ad}\rho, \mathbf u, \mathbf v).
 \end{equation}
We shall combine \eqref{eqn:torsLEX} with the duality formula  (Proposition~\ref{Proposition:duality}):
\begin{equation}\label{eqn:torsDUAL}
   \operatorname{tor}(S,\operatorname{Ad}\rho, \mathbf h \sqcup \widetilde{\mathbf v} )
    \operatorname{tor}(S,\partial S,\operatorname{Ad}\rho, \beta (\mathbf u) \sqcup \widetilde{\mathbf h}) =\\
    \pm \langle
    \wedge( \mathbf h\sqcup \widetilde{\mathbf v}), \wedge (  \beta (\mathbf u) \sqcup \widetilde{\mathbf h} )
    \rangle\, .
\end{equation}
We next decompose the right hand side in \eqref{eqn:torsDUAL}.   By naturality of the intersection form,
$$
  \langle  {{h_i }},  \beta(  u_j ) \rangle = 
  \langle  i( {{h_i }}),  u_j \rangle = 
  \langle  i( j(\widetilde{{{h_i }}})),  u_j \rangle = 0\, .
 $$
Hence the right hand side in \eqref{eqn:torsDUAL} becomes:
$$
\langle
    \wedge( \mathbf h\sqcup \widetilde{\mathbf v}), \wedge (  \beta (\mathbf u) \sqcup \widetilde{\mathbf h} )
    \rangle=
 \langle \wedge{\mathbf{h }}, \wedge\widetilde{{\mathbf{h }}} \rangle \cdot
 \langle \wedge \widetilde{ \mathbf v}, \wedge \beta( \mathbf u ) \rangle .
$$
Again by naturality
 $$\langle \wedge \widetilde{ \mathbf v}, \wedge \beta( \mathbf u ) \rangle 
  = 
  \langle \wedge i(\widetilde{ \mathbf v}), \wedge \mathbf u \rangle =
  \langle \wedge { \mathbf v},\wedge  \mathbf u \rangle .
 $$
 In addition, by definition
 $$
 \langle \wedge{\mathbf{h }}, \wedge\widetilde{{\mathbf{h }}} \rangle = \omega(\wedge {\mathbf{h }}, \wedge{\mathbf{h }}).
$$
Thus
\begin{equation}
\label{eqn:intersectprod}
\langle \wedge( \mathbf h\sqcup \widetilde{\mathbf v}), \wedge (  \beta (\mathbf u) \sqcup \widetilde{\mathbf h} )  \rangle=
\pm \langle\wedge \mathbf u,\wedge \mathbf v\rangle \, \omega(\wedge {\mathbf{h }}, \wedge{\mathbf{h }}).  
\end{equation}
Hence by \eqref{eqn:torsLEX}, \eqref{eqn:torsDUAL},  and \eqref{eqn:intersectprod}:
\begin{equation*}
 \operatorname{tor}(S,\operatorname{Ad}\rho,  \mathbf h\sqcup \widetilde{\mathbf v} )^2=
 \pm
 \omega( \wedge {\mathbf{h }}, \wedge {\mathbf{h }} )  \operatorname{TOR}(\partial S 
 ,\operatorname{Ad}\rho, \mathbf u, \mathbf v) \langle  \mathbf u, \mathbf v \rangle \, .
\end{equation*}
Notice that on the right hand side we use Turaev's sign refined torsion.
Next we claim that the sign of this formula is $+$ and not $-$.
It suffices to determine the sign in the compact case. Then the formula will follow in the complex case by a connectedness argument
(the variety of characters of a free group is connected and irreducible, 
and $\partial$-regularity and being good are 
Zariski open properties, hence they fail in a set of
real codimension $\geq 2$).

We show that the sign is $+$ in the compact case by showing that all terms are positive.
Since $
 \operatorname{TOR}(\partial S 
 ,\operatorname{Ad}\rho, \mathbf u, \mathbf v) \langle  \mathbf u, \mathbf v \rangle
$
is positive by Lemma~\ref{lemma:positive}, the sign will follow from the equality
\begin{equation}
\label{eqn:Pfaffian}
 \omega( \wedge {\mathbf{h }}, \wedge {\mathbf{h }} ) = \left(
 \frac{1}{n!}\omega^n(\wedge \mathbf{h })
 \right)^2,
\end{equation}
that will also complete the proof of the theorem.

We give self-contained proof of \eqref{eqn:Pfaffian} by completeness. 
By Darboux's theorem there are local coordinates so that 
$$
\omega=d x_1\wedge d x_2+\cdots+ d x_{2 n-1}\wedge d x_{2n}.
$$
Let $A$ be a matrix of size $2n\times 2n$ whose colons are the components of the vectors of $\mathbf{h }$ in this coordinate
system. Then, if $J$ denotes the matrix of the standard symplectic form,
$$
 \omega( \wedge {\mathbf{h }}, \wedge {\mathbf{h }} )=\det\left( \omega( h_i,h_j)_{ij} \right)=\det (A^t J A)= (\det A)^2.
$$
On the other hand 
$
\omega^n= n! \, d x_1\wedge d x_2\wedge\cdots \wedge d x_{2n} 
$, hence
$$
\frac{1}{n!}\omega^n(\wedge \mathbf{h })=\det A
$$
and we are done.   
\end{proof}

\section{Formulas for the group $\SLn $}
\label{Section:SLn} 

If $G=\SLn$ we can give explicit formulas for several volume forms. 

\subsection{The form $\nu$ for $\SLn$}

We know that $\nu$ is a constant multiple of $d\,\sigma_1\wedge\cdots \wedge d\,\sigma_r$ and we shall determine
the constant, completing the proof of Proposition~\ref{prop:Steinberg}.
 Recall that we chose the $\CC$-bilinear form on $\sln$ to be 
$$\mathcal B (X,Y)=-\operatorname{tr}(X\, Y) \qquad \forall X,Y\in\sln.
$$
 In $\SLn$ the invariant functions are the symmetric functions on the spectrum: if the 
 eigenvalues of $A\in\SLn$ are $\lambda_1,\ldots,\lambda_N$, then
 \begin{equation*}
  \sigma_1(A)  = \sum_i\lambda_i, \quad
  \sigma_2(A)  = \sum_{i<j}\lambda_i\lambda_j, \ \ldots , \quad
  \sigma_{N-1}(A) =\sum_i \frac{1}{\lambda_i} \, .
 \end{equation*}
Those symmetric functions are characterized by Cayley-Hamilton theorem:
$$
A^N-\sigma_1(A) A^{N-1}+ \sigma_2(A) A^{N-2}-\cdots +(-1)^{N-1}\sigma_{N-1}(A) A+ (-1)^N\operatorname{Id}=0
\, .
$$
 We identify $R(S^1,\SLn)$ with the group $\SLn$ by mapping a representation to the image of a generator of $\pi_1(S)$,
 so that $\sigma_i$ is a function on $R(S^1,\SLn)$ invariant under conjugation. On the other hand, $\sigma_1,\ldots,
 \sigma_{N-1}$ are the coordinates of the isomorphism:
$$\R(S^1, \SLn )\cong \SLn/\! /\SLn \cong\CC^{N-1}.$$

 \begin{Proposition}
 \label{prop:thetaSL} Let
 $\nu\colon \bigwedge^{N-1} H^1(S^1,\Ad\rho )\to\mathbb{C}$ denote the volume form in Definition~\ref{Def:peripheralform}.
 On $\R(S^1, \SLn )\cong\CC^{N-1}$ 
   $$ 
   \nu = \pm \left(\sqrt{-1}\right)^{\epsilon(N)} \sqrt N\,  d\sigma_1\wedge\cdots\wedge d\sigma_{N-1}\,,
   $$
   where $\epsilon(N)=(N-1)(N+2)/2$.
   \end{Proposition}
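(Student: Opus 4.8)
The plan is to invoke Proposition~\ref{prop:nudsigma}, which already gives $\nu = C\,d\sigma_1\wedge\cdots\wedge d\sigma_{N-1}$ for some $C\in\CC^*$, so the whole task reduces to pinning down $C$. As $C$ is constant on $\R^{reg}(S^1,\SLn)\cong\CC^{N-1}$, I would evaluate everything at a single convenient regular representation: a diagonal matrix $A=\operatorname{diag}(\lambda_1,\dots,\lambda_N)$ with pairwise distinct eigenvalues and $\prod_j\lambda_j=1$. At such $A$ the $\Ad$-action preserves the decomposition $\sln=\mathfrak h\oplus\mathfrak h^\perp$ into the diagonal Cartan $\mathfrak h$ and its $\B$-orthogonal complement of off-diagonal matrices, and this is a splitting of $\pi_1(S^1)$-modules. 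Since the eigenvalues are distinct, $\Ad_A-\operatorname{Id}$ is invertible on $\mathfrak h^\perp$, so that summand is acyclic and $H^0\cong H^1\cong\mathfrak h$, all cohomology being carried by the trivial summand $\mathfrak h$.

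Next I would compute the quantity under the square root in \eqref{eqn:peripheral} via Lemma~\ref{lem:TOR_mult}. The contribution of $\mathfrak h$ is exactly the one computed in the proof of Lemma~\ref{lemma:positive}: taking a $\B$-orthonormal $\CC$-basis $\mathbf e$ of $\mathfrak h$ as common basis for $H^0$ and $H^1$ gives $\TOR(S^1,\Ad\rho|_{\mathfrak h},\mathfrak o,\mathbf e,\mathbf e)=1$ and $\langle\wedge\mathbf e,\wedge\mathbf e\rangle=1$. The contribution of $\mathfrak h^\perp$ is the acyclic torsion $\det(\Ad_A-\operatorname{Id})|_{\mathfrak h^\perp}=\prod_{i\ne j}(\lambda_i\lambda_j^{-1}-1)$; using $\prod_j\lambda_j=1$ this equals $(-1)^{\binom N2}V^2$, where $V=\prod_{i<j}(\lambda_i-\lambda_j)$ is the Vandermonde determinant. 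Extracting the square root as in Definition~\ref{Def:peripheralform} then yields $\nu(\wedge\mathbf e)=\pm(\sqrt{-1})^{\binom N2}V$.

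It remains to evaluate $d\sigma_1\wedge\cdots\wedge d\sigma_{N-1}$ on the same orthonormal frame $\mathbf e$; this Gram-determinant computation is the main obstacle and is where $\sqrt N$ appears. Writing $\sigma_i=e_i(\lambda_1,\dots,\lambda_N)$ for the elementary symmetric functions, a vector $X=\operatorname{diag}(x_1,\dots,x_N)\in\mathfrak h\cong H^1$ deforms the eigenvalues by $\delta\lambda_j=\lambda_j x_j$, so $d\sigma_i(X)=\sum_j p_{ij}x_j$ with $p_{ij}=\lambda_j\,\partial e_i/\partial\lambda_j$. Setting $\operatorname{vol}_\B(\wedge\mathbf e)=1$, the ratio $C'=(d\sigma_1\wedge\cdots\wedge d\sigma_{N-1})/\operatorname{vol}_\B$ satisfies $(C')^2=\det\mathcal G$, where $\mathcal G_{ij}=\B^*(d\sigma_i,d\sigma_j)$ is the Gram matrix of the covectors $d\sigma_i$ for the form induced by $\B$ on $\mathfrak h^*$. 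I would compute the $\B$-gradients by realizing $\mathfrak h=\{x\in\CC^N:\sum_jx_j=0\}$ as the orthogonal complement of $\mathbf 1=(1,\dots,1)$ for the extension $x,y\mapsto-\sum_jx_jy_j$ of $\B$ to $\CC^N$ (which has value $-N$ on $\mathbf 1$), projecting the ambient gradients off $\mathbf 1$. This gives $\mathcal G=-P\bigl(I-\tfrac1N\mathbf 1\mathbf 1^t\bigr)P^t$ with $P=(p_{ij})_{1\le i\le N-1}$. The last row of the full matrix $\widetilde P=(p_{ij})_{1\le i\le N}$ is $\mathbf 1^t$ (since $\partial e_N/\partial\lambda_j=e_N/\lambda_j$ and $e_N=1$), and $\det\widetilde P=\pm V$, the Jacobian of the elementary symmetric functions being the Vandermonde determinant up to sign while $\det\operatorname{diag}(\lambda_j)=1$. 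A Schur-complement expansion of $\widetilde P\widetilde P^t=\left(\begin{smallmatrix}PP^t & P\mathbf 1\\ \mathbf 1^tP^t & N\end{smallmatrix}\right)$ then gives $N\det\bigl(P(I-\tfrac1N\mathbf 1\mathbf 1^t)P^t\bigr)=(\det\widetilde P)^2=V^2$, whence $\det\mathcal G=(-1)^{N-1}V^2/N$ and $C'=\pm(\sqrt{-1})^{N-1}V/\sqrt N$.

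Finally I would assemble the pieces. From $\nu(\wedge\mathbf e)=\pm(\sqrt{-1})^{\binom N2}V$ and $d\sigma_1\wedge\cdots\wedge d\sigma_{N-1}=C'\operatorname{vol}_\B$ one gets $C=\pm(\sqrt{-1})^{\binom N2-(N-1)}\sqrt N$; the Vandermonde factors cancel, confirming (as they must) that $C$ does not depend on the chosen point. Since $\binom N2-(N-1)=(N-1)(N-2)/2$ differs from $\epsilon(N)=(N-1)(N+2)/2$ by $2(N-1)$, the two powers of $\sqrt{-1}$ differ only by $(-1)^{N-1}$, which is absorbed into the sign ambiguity of $\nu$. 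This gives $\nu=\pm(\sqrt{-1})^{\epsilon(N)}\sqrt N\,d\sigma_1\wedge\cdots\wedge d\sigma_{N-1}$, in agreement with the value recorded in Proposition~\ref{prop:Steinberg} and with the rank-one check $\nu=\pm\sqrt2\,d\sigma_1$.
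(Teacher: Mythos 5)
Your argument is correct, and its skeleton coincides with the paper's: reduce to a diagonal regular element via Proposition~\ref{prop:nudsigma}, split $\sln=\mathfrak h\perp\mathfrak h^\perp$ as $\pi_1(S^1)$-modules, identify the acyclic contribution of $\mathfrak h^\perp$ with the Weyl denominator $\det(\Ad_A-\operatorname{Id})|_{\mathfrak h^\perp}=(-1)^{\binom N2}V^2$, and watch the Vandermonde factor cancel against the Jacobian of the symmetric functions. Where you genuinely diverge is in the Cartan bookkeeping. The paper picks the explicit non-orthonormal basis $e_i=E_{ii}-E_{NN}$, computes its Gram determinant $\det(\B(e_i,e_j))=(-1)^{N-1}N$ directly (this is where $\sqrt N$ enters there), expresses $\theta_H$ in the logarithmic coordinates $u_i$, and then converts $d\sigma_1\wedge\cdots\wedge d\sigma_{N-1}$ to $du_1\wedge\cdots\wedge du_N$ by Newton's identities plus the Vandermonde determinant, wedging with $du_1+\cdots+du_N$ to handle the trace-zero constraint. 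You instead take a $\B$-orthonormal $\CC$-basis of $\mathfrak h$, which trivializes the torsion-and-pairing contribution of the Cartan to $\nu$, and push all the work into the Gram determinant $\det\B^*(d\sigma_i,d\sigma_j)$ on $\mathfrak h^*$, which you evaluate by projecting off $\mathbf 1$ and a Schur-complement identity for $\widetilde P\widetilde P^t$; here $\sqrt N$ arises from $\B(\mathbf 1,\mathbf 1)=-N$. Each step of your computation checks out: the deformation formula $\delta\lambda_j=\lambda_jx_j$ matches the Weil identification used elsewhere in the paper, $\det\widetilde P=\pm V$ because $e_N=1$, and your exponent $(N-1)(N-2)/2$ of $\sqrt{-1}$ differs from $\epsilon(N)$ by $2(N-1)$, i.e.\ by a real sign absorbed in $\pm$, consistent with the constant $(-1)^{(N-1)(N-2)/4}\sqrt N$ recorded in Proposition~\ref{prop:Steinberg}. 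Your route avoids Newton's identities at the price of the projection/Schur-complement computation; the paper's route avoids abstract dual-metric considerations at the price of an explicit change of basis on $\mathfrak h$. Both are complete proofs.
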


By direct application of the proposition,   we get:

   \begin{Corollary}
   \label{cor:dtrace}
On $\R^{reg}(S^1,\SL2)$ 
    $$
    \nu =\pm \sqrt{2}\, d \operatorname{tr}_\gamma 
    $$
    where $\gamma$ is a generator of $\pi_1(S^1)$.
   \end{Corollary}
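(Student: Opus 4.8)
The plan is to prove Proposition~\ref{prop:thetaSL} and then read off Corollary~\ref{cor:dtrace} as the specialization $N=2$ (there $\sigma_1=\operatorname{tr}_\gamma$ and $\epsilon(2)=2$, so $(\sqrt{-1})^{2}=-1$ is absorbed into the overall sign, giving $\nu=\pm\sqrt2\,d\operatorname{tr}_\gamma$). By Proposition~\ref{prop:nudsigma} we already know $\nu=C\,d\sigma_1\wedge\cdots\wedge d\sigma_{N-1}$, so the whole content is to pin down the constant $C$; since $\nu$ and the $d\sigma_k$ are alternating top-forms it suffices to evaluate both on one frame at one regular representation. I would take $\rho(\gamma)=D=\operatorname{diag}(\lambda_1,\dots,\lambda_N)$ with distinct eigenvalues and $\prod_i\lambda_i=1$, so that $\mathfrak g^{\Ad D}=\mathfrak h$ is the Cartan of traceless diagonal matrices and both $H^0(S^1;\Ad\rho)$ and $H^1(S^1;\Ad\rho)$ are canonically identified with $\mathfrak h$ (Corollary~\ref{coro:Steinberg}).

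First I would compute $\nu(\wedge\mathbf v)^2=\TOR(S^1,\Ad\rho,\mathfrak o,\mathbf u,\mathbf v)\,\langle\wedge\mathbf v,\wedge\mathbf u\rangle$ for a $\mathcal B$-orthonormal frame $\mathbf v$ of $\mathfrak h$, taking $\mathbf u=\mathbf v$ under the two identifications. Splitting $\mathfrak g=\mathfrak h\perp\mathfrak h^\perp$ as $\pi_1(S^1)$-modules ($\mathfrak h^\perp$ the sum of root spaces, on which $\Ad D-\mathrm{Id}$ is invertible) and using the multiplicativity of Lemma~\ref{lem:TOR_mult}, the cohomology lives entirely in the $\mathfrak h$-factor. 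Exactly as in the proof of Lemma~\ref{lemma:positive}, the $\mathfrak h$-factor gives $\TOR_{\mathfrak h}=1$ and $\langle\wedge\mathbf v,\wedge\mathbf u\rangle=1$, while the acyclic $\mathfrak h^\perp$-factor gives $\det\big((\Ad D-\mathrm{Id})|_{\mathfrak h^\perp}\big)=\prod_{i\ne j}(\lambda_i\lambda_j^{-1}-1)=(-1)^{\binom N2}V^2$, where $V=\prod_{i<j}(\lambda_i-\lambda_j)$ and I used $\prod_i\lambda_i=1$. Hence $\nu(\wedge\mathbf v)^2=(-1)^{\binom N2}V^2$.

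The second half is the Jacobian of the Steinberg map on the same frame. A class $v\in\mathfrak h\cong H^1$ deforms $D$ through the diagonal matrices $\exp(tv)D$, so the eigenvalue velocities are $\lambda_iv_i$ and $d\sigma_k(v)=\sum_i\lambda_i\,\partial_{\lambda_i}e_k\,v_i$, with $e_k$ the elementary symmetric functions. I would assemble the full $N\times N$ matrix $\hat J=(\lambda_i\partial_{\lambda_i}e_k)_{k,i=1}^N$: its bottom row $k=N$ is $(1,\dots,1)$ since $e_N=1$, and $\det\hat J=\pm V$ is the classical Vandermonde Jacobian of the elementary symmetric functions. Appending the column $\mathbf 1=(1,\dots,1)^t$ to the frame matrix $P$ (whose columns span $\mathfrak h=\mathbf 1^\perp$) gives $\hat P$, and $\hat J\hat P$ becomes block lower-triangular with diagonal blocks $[d\sigma_k(v^{(j)})]_{(N-1)\times(N-1)}$ and $N$ (the off-diagonal entries $k\sigma_k$ come from Euler's identity $\sum_i\lambda_i\partial_{\lambda_i}e_k=k\,e_k$, the lower-left block vanishing because $\mathbf 1^tP=0$). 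Thus $\det[d\sigma_k(v^{(j)})]=\tfrac1N\det\hat J\det\hat P$, and $(\det\hat P)^2=N\det(P^tP)$; for the orthonormal frame $\mathbf v=\sqrt{-1}\,\mathbf w$ (with $\mathbf w$ a real orthonormal basis of $\mathbf 1^\perp$) this gives $\det(P^tP)=(-1)^{N-1}$, hence $\det[d\sigma_k(v^{(j)})]^2=\tfrac1N V^2(-1)^{N-1}$. Dividing, $C^2=\nu(\wedge\mathbf v)^2/\det[d\sigma]^2=(-1)^{\binom N2+N-1}N=(-1)^{\epsilon(N)}N$ since $\binom N2+N-1=(N-1)(N+2)/2=\epsilon(N)$, giving $C=\pm(\sqrt{-1})^{\epsilon(N)}\sqrt N$.

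The routine determinant algebra (Euler's identity, block-triangularization, and the Vandermonde Jacobian) is the clean part. The delicate part, and where I would be most careful, is the bookkeeping of signs and powers of $\sqrt{-1}$: the ambient $\pm$ accounts only for $\pm\sqrt N$ and not for the exponent of $\sqrt{-1}$, so obtaining $(-1)^{\epsilon(N)}$ rather than $(-1)^{\epsilon(N)\pm1}$ hinges on fixing three signs to be $+$, namely $\TOR_{\mathfrak h}=+1$ and $\TOR_{\mathfrak h^\perp}=+\det((\Ad D-\mathrm{Id})|_{\mathfrak h^\perp})$ (exact equalities of Turaev's refined torsion, valid verbatim in the complex case from the computation in Lemma~\ref{lemma:positive}), together with the orientation sign of the Poincar\'e pairing $\langle\wedge\mathbf v,\wedge\mathbf u\rangle=+1$, which I would anchor to the compact case where Lemma~\ref{lemma:positive} forces $\TOR\cdot\langle\,\cdot\,,\cdot\,\rangle>0$. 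The interplay of the factor $(-1)^{\binom N2}$ from $\TOR_{\mathfrak h^\perp}$ with the factor $(-1)^{N-1}$ produced by the imaginary orthonormal frame is precisely what conspires to the stated exponent $\epsilon(N)$.
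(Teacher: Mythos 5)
Your proposal is correct and follows essentially the same route as the paper: the corollary is read off from Proposition~\ref{prop:thetaSL} at $N=2$, and that proposition is proved by reducing to a regular semisimple element, splitting $\mathfrak{g}=\mathfrak{h}\perp\mathfrak{h}^\perp$ and using multiplicativity of the torsion (Lemma~\ref{lem:TOR_mult}), so that the $\mathfrak{h}^\perp$-factor contributes the Weyl denominator $\det(\Ad_g-\operatorname{Id})|_{\mathfrak h^\perp}=(-1)^{N(N-1)/2}V^2$ and the Cartan factor contributes the $\sqrt N$, exactly as in the paper's Lemmas~\ref{lemma:positive} and \ref{lemmma:wedgewithdui}. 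The only divergence is in evaluating the Steinberg Jacobian: the paper passes through Newton's identities and power sums before invoking the Vandermonde determinant in \eqref{eqn:Vandermonde}, whereas you use the elementary-symmetric-function Jacobian directly with a bordered matrix and Euler's identity --- an equivalent Vandermonde computation yielding the same $C^2=(-1)^{\epsilon(N)}N$.
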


   \begin{proof}[Proof of Proposition~\ref{prop:thetaSL}]
We identify the variety of representations of the cyclic group $ \pi_1(S^1)$  with
$\SLn$ by considering the image of a generator, that we call $g$.
To simplify, we may assume that $g$ is semisimple, 
by Proposition~\ref{prop:nudsigma}.
After diagonalizing:
    $$
    g=\begin{pmatrix}
       e^{u_1} & 0 & & 0\\
       0 & e^{u_2} & & 0 \\
        & & \ddots & \\
        0&0 & & e^{u_N}
      \end{pmatrix}
    $$
with $u_1+\cdots + u_N=0$ and all $u_i$ are pairwise different mod $2\pi \sqrt{-1}\ZZ$. 
The Cartan algebra $\mathfrak{h}$ is the subalgebra of diagonal matrices.
Since the decomposition 
$\sln=\mathfrak{h}\perp \mathfrak{h}^\perp$ is preserved by the adjoint
action of $g$, the torsion is the corresponding product of torsions, by Lemma~\ref{lem:TOR_mult}.
By looking at the action on non-diagonal entries of $\sln$, the  torsion of the adjoint representation on $\mathfrak{h}^\perp$ is:
$$
\prod_{i\neq j} (e^{u_i-u_j}-1)=\prod_{i\neq j} (e^{u_i}-e^{u_j})=(-1)^{N(N-1)/2} \prod_{i> j}(e^{u_i}-e^{u_j})^2,
$$
which is the product $\Delta_G(g)\Delta_G(g^{-1})$ of Weyl functions \cite[\S7]{GoodWall}. Thus
\begin{equation}
 \label{eqn:thetaOmega}
\nu=\pm \big(\sqrt{-1}\big)^{N(N-1)/2} \prod_{i> j}(e^{u_i}-e^{u_j} ) \, \theta_H\, ,
\end{equation}
where
\begin{equation}
 \label{eqn:Omega}
 \theta_H(\wedge\mathbf v)= \sqrt{\operatorname{TOR}(S^1,\mathfrak{h},\mathfrak{o} ,\wedge\mathbf v,\wedge\mathbf u)
 \langle \wedge\mathbf v,\wedge\mathbf u\rangle } .
\end{equation}


We use coordinates for the Cartan algebra via the entries of the diagonal matrices: 
$\mathfrak{h}\cong \{(u_1,\ldots,u_N)\in \CC^N\mid \sum u_i=0 \}  $,

\begin{Lemma}
 \label{lemmma:wedgewithdui} The form $\theta_H$ is the restriction to 
 $\{(u_1,\ldots,u_N)\in \CC^N\mid \sum u_i=0 \} $ of the form
 $$
\pm \left( \sqrt{-1} \right)^{(N-1)} \frac{1}{\sqrt N} \, \, \sum_{i=1}^N
  (-1)^{N-i}d\, u_1\wedge\cdots\wedge \widehat{d\, u_ i}\wedge \cdots \wedge d\, u_N, $$
or, equivalently, of
 $$
 \pm \left( \sqrt{-1} \right)^{(N-1)} \sqrt N \, d\, u_1\wedge\cdots\wedge d\, u_{N-1}.
 $$
\end{Lemma}

\begin{proof}
In order to compute $\operatorname{TOR}(S^1,\mathfrak{h},\mathfrak{o} ,\wedge\mathbf v,\wedge\mathbf u)$ we proceed as in 
the proof of Lemma~\ref{lemma:positive}. In particular we chose a cell decomposition of $S^1$ with a single 
(positively oriented) cell in each dimension,
and bases in homology represented by the geometric bases. With this choice of $\mathbf{u}$ and $\mathbf{v}$, by
\eqref{eqn:TORCartan}, 
$$
\operatorname{TOR}(S^1,\mathfrak{h}, \mathfrak{o}, \mathbf v,\mathbf u)=1 . 
$$
Next we compute $ \langle \wedge\mathbf v,\wedge\mathbf u\rangle $. The basis $\mathbf{u}$ and $\mathbf{v}$ are constructed from 
dual basis in $H^*(S^1;\ZZ)$ tensorized by a basis of $\mathfrak{h}$.
We choose a basis for the  Cartan subalgebra, $\mathbf e=\{e_1,\ldots, e_{N-1}\}$:
$$
e_1=\left(\begin{smallmatrix}
       1 &  & & \\
        & 0 & &  \\
        & & \ddots & \\
        & & & - 1
      \end{smallmatrix}\right),\quad
e_2=\left(\begin{smallmatrix}
       0 &  & & \\
        & 1 & &  \\
        & & \ddots & \\
        & & & - 1
      \end{smallmatrix}\right),\quad\ldots,\quad 
e_{N-1}=\left(\begin{smallmatrix}
       0 &  & & \\
        & \ddots & &  \\
        & & 1 & \\
        & & & - 1
      \end{smallmatrix}\right) \, .
$$
%
%
Since the cells of $S^1$ are positively oriented,
$$
\langle \wedge \mathbf v, \wedge\mathbf u\rangle= \det( \mathcal B(e_i,e_j))_{i,j}) .
$$
In addition, as  $\mathcal B(e_i,e_i)=-2$ and  $\mathcal B(e_i,e_j)= -1$ for  $i\neq j$,
$\det( \mathcal B(e_i,e_j))_{i,j}) = (-1)^{N-1} N
$.
Thus
\begin{equation}\label{eqn:omegahv}
\theta_H(\wedge \mathbf v)= \pm  \sqrt{ (-1)^{N-1} N  }
 \end{equation}
On the other hand,  direct computation yields: 
$$
\sum_{i=1}^N (-1)^{N-i}d\, u_1\wedge\cdots\wedge \widehat{d\, u_ i}\wedge \cdots\wedge d\, u_N (e_1\wedge\cdots\wedge e_{N-1})=N\, .
$$
By the natural identification of $H^1(S^1;\mathfrak{h})$ with the Cartan algebra $\mathfrak{h}$
we get the lemma.
\end{proof}

We conclude the proof of Proposition~\ref{prop:thetaSL}.
By \eqref{eqn:thetaOmega} and Lemma~\ref{lemmma:wedgewithdui}, 
\begin{equation}
\label{eqn:thetadsum}
\nu\wedge (d\,u_1+\cdots + d\,u_N )=  \big(\sqrt{-1}\big)^{\epsilon(N)}\sqrt{N} \prod_{i> j}(e^{u_i}-e^{u_j} )\,  d\,u_1\wedge \cdots \wedge d\,u_N.
\end{equation}
Next we use Newton's identities:
\begin{eqnarray*}
 \sigma_1 & = & e^{u_1}+ \cdots + e^{u_N} \\
 \sigma_2 & = &-\tfrac{1}{2} (e^{2 u_1}+ \cdots + e^{2 u_N} -\operatorname{Pol}(\sigma_1) ) \\
  &\vdots&  \\
 \sigma_j & = &  (-1)^{j+1}\tfrac{1}{j} (e^{j u_1}+ \cdots + e^{j u_N} -\operatorname{Pol}(\sigma_1,\ldots,\sigma_{j-1}) ) 
\end{eqnarray*}
were $ \operatorname{Pol}(\sigma_1,\ldots,\sigma_{j-1})$ denotes a polynomial expression on $\sigma_1,\ldots,\sigma_{j-1}$,
whose precise value is not relevant here.
From them we deduce
$$
 d\sigma_1\wedge\cdots\wedge d\sigma_{N-1}= \pm \frac{1}{(N-1)!}d (\sum e^{u_j}) \wedge d (\sum e^{2 u_j})\wedge \cdots 
 \wedge d (\sum e^{(N-1)u_j})
$$
Since 
$$
d (\sum e^{i u_j}) = i  \sum e^{i u_j}d\, u_ j, 
$$
for $i=1,\ldots , m-1$, Vandermonde determinant yields
\begin{equation}
 \label{eqn:Vandermonde}
(d\,u_1+\cdots + d\,u_N ) \wedge d\sigma_1\wedge\cdots\wedge d\sigma_{N-1} =
\pm \prod_{i>j} (e^{u_i}-e^{u_j}) d\,u_1\wedge \cdots \wedge d\,u_N.
 \end{equation}
Then combine \eqref{eqn:thetadsum} and \eqref{eqn:Vandermonde} to prove the theorem, knowing that our tangent space is 
the kernel of $ d\,u_1+\cdots + d\,u_N $.
\end{proof}


\subsection{The form $\nu$ for $\SUn$}

An element in  $\SUn$ is conjugate to a diagonal element
$$
\begin{pmatrix}
 e^{i\theta_1} & & \\
 & \ddots &  \\
 & & e^{i\theta_N}
\end{pmatrix}
$$
with $\sum\theta_i\in 2\pi\ZZ$.
A matrix is regular if and only if $e^{i\theta_j}\neq e^{i\theta_k}$
for $j\neq k$.

By identifying $\R^{reg}(S^1, \SUn)$ with the image of the generator (or its conjugacy class),
functions on 
$\theta_1,\ldots,\theta_N$ invariant under permutations are well defined on $\R^{reg}(S^1, \SUn)$.
Also the form $d\,\theta_1\wedge\cdots\wedge d\, \theta_{N-1}$ is well defined up to sign  by the relation  
$\sum\theta_i\in 2\pi\ZZ$.
 
\begin{Proposition}
On $\R^{reg}(S^1, \SUn)$  (for $\mathcal B(X,Y)=-\operatorname{tr}(X\, Y)$),
$$
\nu= \pm 2^{N(N-1)/2} \sqrt{N} \prod_{i<j} \sin\left(\frac{\theta_i-\theta_j}{2}\right)  d\,\theta_1\wedge\cdots\wedge d\, \theta_{N-1} .
$$
\end{Proposition}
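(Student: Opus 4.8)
The plan is to run the same argument as in the proof of Proposition~\ref{prop:thetaSL}, now for the compact real form $\SUn$ and its real adjoint representation $\Ad_\RR$, specializing the eigenvalue parameters to $u_j=\sqrt{-1}\,\theta_j$. First I would observe that a regular element of $\SUn$ is conjugate to $g=\operatorname{diag}(e^{\sqrt{-1}\theta_1},\dots,e^{\sqrt{-1}\theta_N})$ with the $e^{\sqrt{-1}\theta_j}$ pairwise distinct, so that the $\B$-orthogonal splitting $\sun=\mathfrak h\perp\mathfrak h^\perp$ into the real Cartan subalgebra of diagonal matrices and its complement is preserved by $\Ad_\RR(g)$, exactly as in the proof of Lemma~\ref{lemma:positive}. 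This being a splitting of $\pi_1(S^1)$-modules, Lemma~\ref{lem:TOR_mult} makes the torsion factor; since $H^*(S^1;\mathfrak h^\perp)=0$ and the pairing \eqref{eqn:Omega} only involves $\mathfrak h$, the defining formula \eqref{eqn:peripheral}, after this factorization, reads $\nu=\pm\theta_H\,\sqrt{\TOR(S^1,\Ad_\RR\rho|_{\mathfrak h^\perp})}$, where $\theta_H$ is built from the Cartan part as in \eqref{eqn:Omega}.

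I would then evaluate the complementary factor as in Lemma~\ref{lemma:positive}: the torsion of the acyclic complex on $\mathfrak h^\perp$ equals $\det(\Ad_\RR(g)-\operatorname{Id})|_{\mathfrak h^\perp}=\Delta_G(g)\Delta_G(g^{-1})=|\Delta_G(g)|^2$. After complexifying, the eigenvalues of $\Ad(g)$ on $\mathfrak h^\perp$ are $e^{\sqrt{-1}(\theta_i-\theta_j)}$ for $i\neq j$, so
\[
\sqrt{\TOR(S^1,\Ad_\RR\rho|_{\mathfrak h^\perp})}=|\Delta_G(g)|=\prod_{i<j}\bigl|e^{\sqrt{-1}\theta_i}-e^{\sqrt{-1}\theta_j}\bigr|=2^{N(N-1)/2}\prod_{i<j}\Bigl|\sin\tfrac{\theta_i-\theta_j}{2}\Bigr|,
\]
using $|e^{\sqrt{-1}\theta_i}-e^{\sqrt{-1}\theta_j}|=2|\sin((\theta_i-\theta_j)/2)|$. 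Taking the modulus here is what lets me avoid tracking the phases $e^{\sqrt{-1}(\theta_i+\theta_j)/2}$ that appear in the $\SLn$ computation.

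For the Cartan factor I would repeat Lemma~\ref{lemmma:wedgewithdui}, now over $\RR$, with the real basis $\{\sqrt{-1}\,e_1,\dots,\sqrt{-1}\,e_{N-1}\}$ of $\mathfrak h$ (the $e_k$ of that lemma). Choosing a cell decomposition of $S^1$ with a single cell in each dimension gives $\TOR(S^1,\mathfrak h,\mathfrak o)=1$ by \eqref{eqn:TORCartan}, while $\langle\wedge\mathbf v,\wedge\mathbf u\rangle=\det\bigl(\B(\sqrt{-1}\,e_k,\sqrt{-1}\,e_l)\bigr)=\det\bigl(-\B(e_k,e_l)\bigr)=N>0$; hence $\theta_H(\wedge\mathbf v)=\pm\sqrt N$. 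Identifying, as in Proposition~\ref{prop:nudsigma}, the coordinate field $\partial_{\theta_k}$ with $(\partial_{\theta_k}g)\,g^{-1}=\sqrt{-1}\,e_k$, one finds that $d\theta_1\wedge\cdots\wedge d\theta_{N-1}$ takes the value $1$ on the same basis, so $\theta_H=\pm\sqrt N\,d\theta_1\wedge\cdots\wedge d\theta_{N-1}$.

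Multiplying the two factors then gives
\[
\nu=\pm|\Delta_G(g)|\,\theta_H=\pm\,2^{N(N-1)/2}\sqrt N\prod_{i<j}\sin\Bigl(\tfrac{\theta_i-\theta_j}{2}\Bigr)\,d\theta_1\wedge\cdots\wedge d\theta_{N-1},
\]
the absolute values on the sines being absorbed into the overall sign, which is legitimate since $\nu$ is defined only up to sign. The one point to watch is that the square roots be combined consistently; this is guaranteed by the reality and positivity of $\TOR\cdot\langle\cdot,\cdot\rangle$ established in Lemma~\ref{lemma:positive}, so no genuine difficulty arises—the computation is essentially the $\SLn$ one of Proposition~\ref{prop:thetaSL} specialized under $u_j=\sqrt{-1}\,\theta_j$.
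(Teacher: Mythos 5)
Your proposal is correct and follows essentially the same route as the paper: the paper's proof also reduces to $\nu=\vert\Delta_G(g)\vert\,\theta_H$ via the $\mathfrak h\perp\mathfrak h^\perp$ decomposition already set up in the proofs of Lemma~\ref{lemma:positive} and Proposition~\ref{prop:thetaSL}, then quotes $\vert\Delta_G(g)\vert=2^{N(N-1)/2}\prod_{i<j}\vert\sin((\theta_i-\theta_j)/2)\vert$ from Goodman--Wallach and $\theta_H=\pm\sqrt N\,d\theta_1\wedge\cdots\wedge d\theta_{N-1}$ from Lemma~\ref{lemmma:wedgewithdui}. You merely re-derive these two ingredients explicitly (the elementary identity $\vert e^{\sqrt{-1}\theta_i}-e^{\sqrt{-1}\theta_j}\vert=2\vert\sin((\theta_i-\theta_j)/2)\vert$ in place of the citation, and the substitution $u_j=\sqrt{-1}\,\theta_j$ with basis $\{\sqrt{-1}\,e_k\}$ for the Cartan factor), which checks out.
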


\begin{proof}
 From the proof of  Proposition~\ref{prop:thetaSL}, if $g\in \SUn $ is the image of the generator of $\pi_1(S^1)$,
 $\nu= \vert \Delta_{G}({g})\vert \,\theta_{ H}
 $.
 By \cite[Exercise 7.3.5]{GoodWall}, 
 $$
\vert \Delta_{G}({g}) \vert = 2^{N(N-1)/2} \prod_{i<j} \left\vert \sin\left(\frac{\theta_i-\theta_j}{2}\right) \right\vert .
 $$
 On the other hand, by Lemma~\ref{lemmma:wedgewithdui},
 $$
 \theta_{ H}= \pm\sqrt{N} \,   d\,\theta_1\wedge\cdots\wedge d\, \theta_{N-1},
 $$
 which proves the formula.
\end{proof}

\begin{Remark}
We may consider also the restriction to $\operatorname{SL}_N(\RR)$. Then the expression of the volume form is just the restriction of Proposition~\ref{prop:thetaSL}.
It may be either real valued or  $\sqrt{-1}$ times real,  
because $\mathcal{B}$ is not positive definite on $\mathfrak{sl}_N(\RR)$. The restriction of $\mathcal{B}$ to $\mathfrak{so}_N$ is positive definite, but its 
restriction to its orthogonal 
$\mathfrak{so}_N^\perp\subset \mathfrak{sl}_N(\RR)$
is 
negative definite. Notice that $\dim \mathfrak{so}_N^\perp =(N-1)(N+2)/2\equiv \epsilon (N) \mod 2$,  that determines whether it is real or $\sqrt{-1}$ times real. 
\end{Remark}

\subsection{Volume form for representation spaces of  $F_2$}
 
In this subsection we compute the volume  form on  the space of 
representations of a free group of rank 2, $F_2=\langle \gamma_1,\gamma_2\rangle$,  in $\SL2$ and $\SLL_3(\CC)$.
We use the notation $t_{i_1\cdots i_k}$
for the trace functions of $\gamma_{i_1}\cdots \gamma_{i_k}$ in $\SL2$, with the convention 
$\gamma_{\bar i}=\gamma_i ^{-1}$.
For instance, the trace function of $\gamma_1\gamma_2^{-1}$
will be denoted by $t_{1\bar 2}$.
 
We start with $\R(F_2,\SL2)$. 
By Fricke-Klein theorem, see \cite{Goldman09}, the
respective trace functions
of $\gamma_1$, $\gamma_2$ and $\gamma_{1}\gamma_2$ define an isomorphism
\begin{equation}
 \label{eqn:Fricke}
 (t_1,t_2,t_{12})\colon X(F_2,\SL2)\to \CC^3\, .
\end{equation}
Since $F_2$ is the fundamental group of a pair of pants $S_{0,3} $, and  $\gamma_1$, $\gamma_2$ and $\gamma_{1}\gamma_2$
correspond to the peripheral elements, by Theorem~\ref{Thm:vol} and Corollary~\ref{cor:dtrace}:

\begin{Corollary}
 \label{cor:C3}
 The volume form on $\R^*(F_2,\SL2)$
  is
 $$
\Omega_{ F_2}= \pm 2\sqrt{2}\,  d\, t_1 \wedge d\, t_2 \wedge d\, t_{12}\, . 
$$
\end{Corollary}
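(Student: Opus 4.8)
The plan is to apply Theorem~\ref{Thm:vol} to the pair of pants $S_{0,3}$, whose fundamental group is $F_2=\langle\gamma_1,\gamma_2\rangle$ and whose three boundary curves are represented, up to conjugacy and orientation, by $\gamma_1$, $\gamma_2$, and $\gamma_1\gamma_2$. Before invoking the theorem I would first check that it applies at \emph{every} point of $\Rg(F_2,\SL2)$, i.e.\ that every good representation is automatically $\partial$-regular. Indeed, if any of $\rho(\gamma_1)$, $\rho(\gamma_2)$, $\rho(\gamma_1\gamma_2)$ were equal to $\pm\operatorname{Id}$, then the image of $\rho$ would be abelian and $\rho$ reducible; since a non-central element of $\SL2$ has minimal polynomial of degree $2$ and is therefore regular, irreducibility forces all three peripheral images to be regular.

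Next I would feed the numerical data into the dimension formula of Theorem~\ref{Thm:vol}. With $\chi(S_{0,3})=-1$, $\dG=3$, $\rG=1$, and $\bS=3$, we get
$$n=\tfrac12\big(-\chi(S)\,\dG-\bS\,\rG\big)=\tfrac12(3-3)=0.$$
Thus the relative character variety is a point, the symplectic factor $\omega^n/n!$ degenerates to the constant $1$, and Theorem~\ref{Thm:vol} collapses to $\Omega_{\pi_1(S_{0,3})}=\pm\,\nu_1\wedge\nu_2\wedge\nu_3$, where $\nu_i$ is the peripheral form of the $i$-th boundary circle.

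Finally I would evaluate each $\nu_i$ by Corollary~\ref{cor:dtrace}, which gives $\nu=\pm\sqrt2\,d\operatorname{tr}_\gamma$ on $\R^{reg}(S^1,\SL2)$. Applied to the three peripheral elements---and using that $\operatorname{tr}$ is inversion invariant in $\SL2$, so the boundary $(\gamma_1\gamma_2)^{-1}$ contributes $d\,t_{12}$---this yields $\nu_1=\pm\sqrt2\,d\,t_1$, $\nu_2=\pm\sqrt2\,d\,t_2$, and $\nu_3=\pm\sqrt2\,d\,t_{12}$. Wedging and using $(\sqrt2)^3=2\sqrt2$ produces the stated formula, and the Fricke--Klein isomorphism \eqref{eqn:Fricke} certifies that $(t_1,t_2,t_{12})$ are global coordinates, so the result is a genuine nonvanishing top form. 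The only point requiring care---the main obstacle, such as it is---is the bookkeeping that identifies the three peripheral factors with the trace coordinates via the tangent-space splitting of Corollary~\ref{coro:Steinberg}, making sure the $\sqrt2$ from each boundary enters the wedge product exactly once; everything else is direct substitution.
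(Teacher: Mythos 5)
Your proposal is correct and follows exactly the paper's route: apply Theorem~\ref{Thm:vol} to the pair of pants $S_{0,3}$, observe that $n=0$ so the symplectic factor is trivial, and evaluate the three peripheral forms via Corollary~\ref{cor:dtrace}. The extra checks you include (good $\Rightarrow$ $\partial$-regular for $\SL2$, and the Fricke--Klein identification of $(t_1,t_2,t_{12})$ as global coordinates) are details the paper leaves implicit, and they are argued correctly.
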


\medskip

We next discuss the space of representations of $F_2=\langle \gamma_1,\gamma_2\rangle$ in  $\SLL_3(\CC)$. 


The symmetric invariant functions $\sigma_1$ and $\sigma_2$ of a matrix in $\SLL_3(\CC)$  are, respectively, its trace and the trace of its inverse. 
Recall that the trace functions in $\SLL_3(\CC)$ are denoted by $\uptau_{i_1\cdots i_k}$ instead of $t_{i_1\cdots i_k}$.
According to \cite{LawtonJA}, 
$X(F_3,\SLL_3(\CC))$ is a branched covering of $\CC^8$ with coordinates
$$
\mathcal{T}=(\uptau_1,\uptau_{\bar 1},\uptau_2,\uptau_{\bar 2},\uptau_{12},\uptau_{\bar 1\bar 2},\uptau_{1\bar 2}, \uptau_{\bar 12})
\colon X(F_2, \SLL_3(\CC))\to \CC^8.$$
The branching is given by the trace of the commutators $\uptau_{12\bar1\bar2}$ and $\uptau_{21\bar 2\bar 1}$
that are solutions of a quadratic equation
$$
z^2 - P z + Q = 0
$$
for some polynomials $P$ and $Q$ on the variables $\uptau_1,\uptau_{\bar 1},\uptau_2,\uptau_{\bar 2},\uptau_{12},\uptau_{\bar 1\bar 2},\uptau_{1\bar 2}, \uptau_{\bar 12}$,
the expression of $P$ and $Q$ can be found in \cite{LawtonJA,LawtonP}.
Notice that $P= \uptau_ {12\bar1\bar2} + \uptau_{21\bar 2\bar 1}$ and
$Q = \uptau_ {12\bar1\bar2} \, \uptau_{21\bar 2\bar 1}$.

Thus,  as   $\gamma_1$, $\gamma_2$ and $\gamma_{1}\gamma_2$ represent
the peripheral elements  of a pair of pants $S_{0,3} $,
a generic subset of the relative variety of representations is locally parameterized by $(\uptau_{1\bar 2}, \uptau_{\bar 1 2})$; in the
subset of points where there is no branching, i.e.~$\uptau_ {12\bar1\bar2} \neq \uptau_{21\bar 2\bar 1}$.
Lawton has computed in \cite[Thm.~25]{LawtonP} the Poisson bracket:
$$
\{\uptau_{1\bar 2}, \uptau_{\bar 1 2}  \}= \uptau_{21\bar 2\bar 1}-\uptau_{12\bar1\bar2}.
$$
As $(\uptau_{1\bar 2}, \uptau_{\bar 1 2})$ are local coordinates, an elementary computation   yields
\begin{equation}     
 \label{eqn:poisson}
 \omega=-\frac{1}{ \{\uptau_{1\bar 2}, \uptau_{\bar 1 2}  \} } d\, \uptau_{1\bar 2}\wedge d\, \uptau_{\bar 1 2}.
\end{equation}
Therefore
\begin{equation}
\label{eqn:symplecticSL3}
\omega= \frac{d\, \uptau_{1\bar 2}\wedge d\, \uptau_{\bar 1 2}  }{  \uptau_{21\bar2\bar1}-\uptau_{12\bar1\bar2}}.
\end{equation}
On the other hand, by  Proposition~\ref{prop:thetaSL}, the form $\nu_1$ corresponding to $\gamma_1$ is
$$
\nu_1=\pm\sqrt{-3} \, d\, \uptau_1\wedge d\, \uptau_{\bar 1},
$$
and similarly for $\gamma_2$ and $\gamma_{12}$. 
Using Theorem~\ref{Thm:vol} and these computations we get:

\begin{Proposition}
\label{Prop:volF2SL3}
For 
\[
\mathcal{T}=(\uptau_1,\uptau_{\bar 1},\uptau_2,\uptau_{\bar 2},\uptau_{12},\uptau_{\bar 1\bar 2},\uptau_{1\bar 2}, \uptau_{\bar 12})
\colon \R^*(F_2,\SLL_3(\CC) )\setminus 
\{   \uptau_{21\bar2\bar1} =  \uptau_{12\bar1\bar2}\}\to \CC^8 
\]
the restriction of the holomorphic volume form
on
$\R^*(F_2,\SLL_3(\CC) )\setminus 
\{   \uptau_{21\bar2\bar1} =  \uptau_{12\bar1\bar2}\}$ is $ \Omega_{F_2}^{\SLL_3(\CC)  }= \pm \mathcal{T}^*\Omega$ where
 $$
\Omega=\pm \frac{3\sqrt{-3}}{  \uptau_{21\bar2\bar1}-\uptau_{12\bar1\bar2}}
d\, \uptau_1\wedge d\, \uptau_{\bar 1} \wedge d\, \uptau_2\wedge d\, \uptau_{\bar 2}
\wedge d\, \uptau_{12}\wedge d\, \uptau_{\bar 1\bar 2}\wedge d\, \uptau_{1\bar 2}\wedge d\, \uptau_{\bar 1 2}.
$$ 
\end{Proposition}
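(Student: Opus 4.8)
The plan is to read the statement off as a direct instance of Witten's formula (Theorem~\ref{Thm:vol}) for the pair of pants $S=S_{0,3}$, whose fundamental group is $F_2=\langle\gamma_1,\gamma_2\rangle$ and whose three boundary curves are carried by $\gamma_1$, $\gamma_2$ and $\gamma_{12}=\gamma_1\gamma_2$. For $G=\SLL_3(\CC)$ one has $\dG=8$, $\rG=2$ and $\chi(S)=-1$, so the half-dimension appearing in Theorem~\ref{Thm:vol} is
$$
n=\frac12\bigl(-\chi(S)\,\dG-b\,\rG\bigr)=\frac12(8-6)=1 .
$$
Hence $\omega^n/n!=\omega$, and Theorem~\ref{Thm:vol} specializes to
$$
\Omega_{F_2}=\pm\,\omega\wedge\nu_1\wedge\nu_2\wedge\nu_{12},
$$
where $\nu_1,\nu_2,\nu_{12}$ are the peripheral forms attached to $\gamma_1,\gamma_2,\gamma_{12}$ and $\omega$ is the symplectic form on the relative character variety.

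Next I would substitute the three ingredients that have already been computed upstream. For the symplectic factor I would use \eqref{eqn:symplecticSL3}, which expresses $\omega$ in the local coordinates $(\uptau_{1\bar 2},\uptau_{\bar 1 2})$ on the relative variety and is valid precisely on $\{\uptau_{21\bar 2\bar 1}\neq\uptau_{12\bar 1\bar 2}\}$. For the peripheral factors I would invoke Proposition~\ref{prop:thetaSL} with $N=3$; since $\epsilon(3)=5$ one gets $(\sqrt{-1})^{5}\sqrt 3=\sqrt{-3}$, and the two fundamental characters of a matrix in $\SLL_3(\CC)$ are its trace and its inverse trace, so
$$
\nu_1=\pm\sqrt{-3}\,d\,\uptau_1\wedge d\,\uptau_{\bar 1},\quad
\nu_2=\pm\sqrt{-3}\,d\,\uptau_2\wedge d\,\uptau_{\bar 2},\quad
\nu_{12}=\pm\sqrt{-3}\,d\,\uptau_{12}\wedge d\,\uptau_{\bar 1\bar 2},
$$
where I use that the inverse of $\gamma_1\gamma_2$ has trace $\uptau_{\bar 2\bar 1}=\uptau_{\bar 1\bar 2}$ by cyclic invariance.

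To finish, I would wedge the four forms together. Collecting the scalar $(\sqrt{-3})^3=-3\sqrt{-3}$ and absorbing its sign into the overall $\pm$ leaves the eight differentials $d\,\uptau_{1\bar 2}\wedge d\,\uptau_{\bar 1 2}\wedge d\,\uptau_1\wedge d\,\uptau_{\bar 1}\wedge\cdots\wedge d\,\uptau_{12}\wedge d\,\uptau_{\bar 1\bar 2}$ divided by $\uptau_{21\bar 2\bar 1}-\uptau_{12\bar 1\bar 2}$. Since $d\,\uptau_{1\bar 2}\wedge d\,\uptau_{\bar 1 2}$ has even degree it is central in the exterior algebra, so moving it past the remaining degree-six form into the final two slots costs the sign $(-1)^{2\cdot 6}=1$; the result is exactly $\mathcal T^*\Omega$ for the form $\Omega$ in the statement, because these differentials are the pullbacks under $\mathcal T$ of the coordinate differentials on $\CC^8$.

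The substance of the argument lies not in this assembly but in the inputs, which are already in place: that $\mathcal T$ is a local parameterization off the branch locus (from Lawton's description of $X(F_2,\SLL_3(\CC))$ in \cite{LawtonJA}), that $\omega$ has the form \eqref{eqn:symplecticSL3} (from Lawton's Poisson bracket in \cite{LawtonP}), and that the splitting into the two symplectic directions $(\uptau_{1\bar 2},\uptau_{\bar 1 2})$ and the three peripheral pairs is the one furnished by the exact sequence of Corollary~\ref{coro:Steinberg}, so that the wedge in Theorem~\ref{Thm:vol} is genuinely computed in the coordinates $\mathcal T$. Given these, the only thing to watch is the bookkeeping of the powers of $\sqrt{-3}$ and the verification that every reordering sign is even and hence harmless; this is the sole, and minor, obstacle.
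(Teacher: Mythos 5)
Your proposal is correct and follows exactly the paper's own route: specialize Theorem~\ref{Thm:vol} to the pair of pants $S_{0,3}$ (so $n=1$), substitute the symplectic form \eqref{eqn:symplecticSL3} coming from Lawton's Poisson bracket, and insert the three peripheral forms from Proposition~\ref{prop:thetaSL} with $N=3$, collecting $(\sqrt{-3})^3=-3\sqrt{-3}$. The sign and reordering bookkeeping you carry out is consistent with the paper's (implicit) computation.
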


\section{Symplectic forms }
\label{Section:symplectic}

Let $\rho_0 \in R^*(S, \SL2)$ be a good, $\partial$-regular representation.
In this section we discuss the symplectic from on the relative character variety 
$\R^*(S,\partial S, \SL2)_{\rho_0}$
for the two surfaces $S_{1,1}$ and $S_{0,4}$, which are the surfaces with 2-dimensional
relative character variety $\R^*(S,\partial S, \SL2)_{\rho_0}$.
We use Goldman's \emph{product formula} for the Poisson bracket 
for surfaces \cite{GoldmanHamiltonian}, as well as  Lawton's generalization \cite[Sec.~4]{LawtonP} to the relative character variety.

For this purpose, let $f\colon G\to\CC$ be an \emph{invariant function} (i.e. a function on $G$ invariant under conjugation).
Following Goldman~\cite{GoldmanComplex}, its \emph{variation function} (relative to $\mathcal{B}$) is defined as the unique map 
$F\colon G\to\mathfrak{g}$ such that for all $X\in\mathfrak{g}$, $A\in G$,
\begin{equation}\label{eqn:variation}
\frac{d}{dt} f\big(A \, \exp(tX)\big) \big|_{t=0} =\mathcal{B}\big(F(A),X\big)\,.
\end{equation}
When $G=\SL2$ and $f=\operatorname{tr}$, the corresponding variation formula $\operatorname{\mathbf T}\colon\SL2\to\sl2$  must satisfy,
by \eqref{eqn:variation},
$\operatorname{tr}(A \, X) = - \operatorname{tr}\big(\operatorname{\mathbf T}(A)\,X\big)$, $\forall X\in\sl2$ and $ \forall A\in\SL2$.
Thus
$$
\operatorname{\mathbf T}(A)=\tfrac{\operatorname{tr} A }{2} \operatorname {Id} -A
=-\frac{  1}{2}(A-A^{-1}) \quad \text{ for $A \in \SL2$.}
$$
Notice that $\operatorname{\mathbf T} (A)\in  \sl2$ is invariant by the adjoint action of $A$, and 
$\operatorname{\mathbf T}(A)\neq 0$ for $A\neq\pm\operatorname{Id}$.

\begin{Proposition}[\cite{GoldmanComplex,LawtonP}]
\label{Prop:GoldmanPoisson}
Let $\alpha,\beta$  be oriented, simple closed curves meeting transversally in double points $p_1,\ldots,p_k\in S$. For $[\rho]\in\R^*(S,\partial S, \SL2)_{\rho_0}$ and each $p_i$, chose representatives
$$
\rho_i\colon\pi_1(S,p_i)\to  \SL2 
$$
of $[\rho]$.
Let $\alpha_i,\beta_i$ be elements in $\pi_1(S,p_i)$ representing  $\alpha,\beta$ respectively.
For the bilinear form 
$\mathcal B(X,Y)= -\operatorname{tr} (X\, Y)$,
the Poisson bracket of the trace functions $t_\alpha$ and $t_\beta$ is
\begin{align*}
\{t_\alpha, t_\beta \}([\rho])
&=
\sum_{i=1}^k \epsilon(p_i,\alpha,\beta)\,
\mathcal{B}\big(\operatorname{\mathbf T}(\rho_i(\alpha_i))  , \operatorname{\mathbf T}(\rho_i(\beta_i))\big) \\
&=-
\sum_{i=1}^k \epsilon(p_i,\alpha,\beta)\,
\operatorname{tr}
\big(\operatorname{\mathbf T}(\rho_i(\alpha_i))  \, \operatorname{\mathbf T}(\rho_i(\beta_i))\big)
\end{align*}
where  $\epsilon(p_i,\alpha,\beta)$ denotes the oriented intersection number of $\alpha$ and $\beta$ at $p_i$. 
\end{Proposition}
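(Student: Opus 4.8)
The plan is to identify the Poisson bracket $\{t_\alpha,t_\beta\}$ with a twisted intersection pairing of the two curves, localized at their crossing points, in the spirit of Goldman \cite{GoldmanComplex,GoldmanHamiltonian} and its extension to the relative setting by Lawton \cite{LawtonP}. The second equality in the statement is immediate from $\mathcal B(X,Y)=-\operatorname{tr}(XY)$, so it suffices to prove the first. First I would compute the differential of a trace function as a cohomology class. Representing a tangent vector to $\Rg(S,\partial S,\SL2)_{\rho_0}$ by a cocycle $u\in Z^1(\pi_1(S);\operatorname{Ad}\rho)$ via Proposition~\ref{Prop:tangentspace} (so that $\rho_s(\gamma)=\exp(s\,u(\gamma))\rho(\gamma)$ to first order), differentiating $t_\alpha=\operatorname{tr}\circ\rho(\alpha)$ and using the defining property \eqref{eqn:variation} of the variation function gives
$$
dt_\alpha(u)=\operatorname{tr}\big(\rho(\alpha)\,u(\alpha)\big)=\mathcal B\big(\operatorname{\mathbf T}(\rho(\alpha)),u(\alpha)\big).
$$
The same formula holds for $\beta$.

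The geometric content of this formula is the key to the whole argument: because $\operatorname{\mathbf T}(\rho(\alpha))$ is fixed by $\operatorname{Ad}\rho(\alpha)$, it determines a parallel section of the local system $\operatorname{Ad}\rho$ along the loop $\alpha$. Under the non-singular Poincar\'e pairing \eqref{eq:pairing-cohom} between $H^1(S;\operatorname{Ad}\rho)$ and $H^1(S,\partial S;\operatorname{Ad}\rho)$, the class $dt_\alpha$ is therefore represented by a relative cocycle supported in a tubular neighbourhood of $\alpha$, with twisted coefficient this parallel section; likewise for $\beta$. I would then pass from differentials to the bracket: by the definition of the symplectic form \eqref{eqn:omega} and the associated Poisson structure, $\{t_\alpha,t_\beta\}=\omega(X_{t_\alpha},X_{t_\beta})=dt_\beta(X_{t_\alpha})$, where the Hamiltonian vector field $X_{t_\alpha}\in\ker(i)$ is the image of $dt_\alpha$ under the isomorphism induced by \eqref{eq:pairing-cohom}. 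Combining the two steps reduces $\{t_\alpha,t_\beta\}$ to the cup-product pairing of the two Poincar\'e-dual classes, i.e.\ to a twisted intersection number of $\alpha$ and $\beta$.

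Since $\alpha$ and $\beta$ meet transversally in the double points $p_1,\dots,p_k$, this cup product localizes to a finite sum over the $p_i$. At each $p_i$ the topological factor is the oriented intersection sign $\epsilon(p_i,\alpha,\beta)$, and the coefficient factor is the value of $\mathcal B$ on the two parallel sections read off at $p_i$; choosing a based representative $\rho_i\colon\pi_1(S,p_i)\to\SL2$ and based loops $\alpha_i,\beta_i$ identifies these sections with $\operatorname{\mathbf T}(\rho_i(\alpha_i))$ and $\operatorname{\mathbf T}(\rho_i(\beta_i))$, yielding the asserted formula.

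The hard part will be the localization step: making rigorous that the cup product of the two Poincar\'e-dual classes equals the sum of the local twisted contributions, with correct signs and base-point bookkeeping. The delicate point is that each coefficient section is parallel only along its own curve, so at each crossing $p_i$ one must trivialize the local system locally and express both sections in a common frame based at $p_i$; this is exactly where the choice of $\rho_i$ and of $\alpha_i,\beta_i$ enters, and where one checks independence of these choices, the $\operatorname{Ad}$-invariance of $\operatorname{\mathbf T}$ ensuring that conjugating $\rho_i$ leaves $\mathcal B(\operatorname{\mathbf T}(\rho_i(\alpha_i)),\operatorname{\mathbf T}(\rho_i(\beta_i)))$ unchanged. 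In the relative setting I would additionally verify that pairing absolute against relative cohomology produces no boundary term, which holds because $\alpha,\beta$ are interior curves and $\omega$ in \eqref{eqn:omega} is precisely the restriction of \eqref{eq:pairing-cohom} to the tangent directions $\ker(i)$. Alternatively, the statement may be imported directly from \cite{GoldmanComplex} for the closed case and transported to $\Rg(S,\partial S,\SL2)_{\rho_0}$ via \cite[Sec.~4]{LawtonP}.
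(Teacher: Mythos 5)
The paper does not prove Proposition~\ref{Prop:GoldmanPoisson} at all: it is stated with the citation \cite{GoldmanComplex,LawtonP} and used as a black box, so there is no internal proof to compare against. Your outline is a faithful reconstruction of Goldman's original argument: the computation $dt_\alpha(u)=\mathcal B(\operatorname{\mathbf T}(\rho(\alpha)),u(\alpha))$ is correct and is exactly the role the variation function plays in \eqref{eqn:variation}; the observation that $\operatorname{Ad}$-invariance of $\operatorname{\mathbf T}(\rho(\alpha))$ makes it a parallel section along $\alpha$, so that $dt_\alpha$ is Poincar\'e dual to $\alpha$ with that twisted coefficient, is Goldman's key idea; and the reduction of $\{t_\alpha,t_\beta\}$ to a localized cup product is the right mechanism. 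That said, what you have is a plan rather than a proof: the localization of the twisted cup product to the crossings, with the correct signs and the base-point bookkeeping for $\rho_i$, $\alpha_i$, $\beta_i$, is the entire technical content of Goldman's theorem, and you explicitly defer it. In the relative setting one must also justify that the Hamiltonian vector field of $t_\alpha$ (a priori only defined on the symplectic leaf $\Rg(S,\partial S,\SL2)_{\rho_0}$) pairs with $dt_\beta$ as claimed; this is the content of Lawton's extension in \cite[Sec.~4]{LawtonP}. Your closing sentence --- import the closed case from \cite{GoldmanComplex} and the relative case from \cite{LawtonP} --- is precisely what the paper does, and is the cleanest way to discharge the statement.
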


 For later computations, it is useful to recall (cf.~\cite{AcunaMontesinos}) that for all $A,B\in \SL2$
\begin{equation} \label{eqn:trace}
\operatorname{tr}(A) \operatorname{tr}(B)=\operatorname{tr}(A B) +\operatorname{tr}(A B^{-1})\,,
\end{equation}
and a direct calculation gives
\begin{equation} \label{eqn:traceInvariant}
 \operatorname{tr}(\operatorname{\mathbf T}(A)\, \operatorname{\mathbf T}(B))=
 \frac{1}{2} \operatorname{tr}(AB-AB^{-1} )\,.
\end{equation}

\subsection{A torus minus a disc}
Let $S_{1,1}$ denote   a surface of genus 1 with a boundary component. Its fundamental group is  freely generated by two elements
$\gamma_1$ and $\gamma_2$ that are represented by curves that intersect at one point. The peripheral element is the commutator 
$[\gamma_1,\gamma_2]=\gamma_1\gamma_2\gamma_1^{-1}\gamma_2^{-1}$.
The variety of characters  $X(S_{1,1},\SL2)$ is the variety of characters of the free group on two generators, and it is isomorphic
to $\CC^3$ with coordinates $t_1,t_2,t_{12}$, by Fricke-Klein  \eqref{eqn:Fricke}.
Equality~\eqref{eqn:trace}
implies that $t_1t_2 = t_{12}+t_{1\bar2}$.

Generically, the relative character variety is the hypersurface of $\CC^3$ that is a level set of the trace of the commutator,
$t_{12\bar 1\bar2}= c$ for some $c\in\CC$, where 
\begin{equation}
 \label{eqn:commutator}
t_{12\bar 1\bar2} =t_1^2+t_2^2+t_{12}^2 - t_1t_2t_{12}-2 \, .
 \end{equation}
Therefore, given a good representation $\rho_0$ the variables $(t_1,t_2)$ define local coordinates
of  $\R^*(S_{1,1},\partial S_{1,1}, \SL2)_{\rho_0}$
precisely when $\frac{\partial \phantom {t_{12}}}{\partial {t_{12}} } t_{12\bar 1\bar2} \neq 0$, ie.\
when
\begin{equation}
 \label{eqn:t1t2coordinates}
 2 t_{12}-t_1 t_2=t_{12}-t_{1\bar 2}\neq 0,
\end{equation}
where $t_{1\bar 2}=t_{\bar 1 2}$ is the trace function of $\gamma_1\gamma_2^{-1}$. Hence we obtain a local parametrization
 \[
 T=(t_1,t_2)\colon \R^*(S_{1,1},\partial S_{1,1}, \SL2)_{\rho_0}\setminus\{t_{12}= t_{1\bar 2}\}\to\CC^2 .
 \] 
We compute next the symplectic form. 

\begin{Proposition}
\label{Prop:symplectic11}
Let $\rho_0 \in R^*(S, \SL2)$ be a good, $\partial$-regular representation
such that $t_{12}(\rho_0)\neq t_{1\bar 2}(\rho_0)$. 
Then
the symplectic form  on
$\R^*(S_{1,1},\partial S_{1,1}, \SL2)_{\rho_0}\setminus\{t_{12}= t_{1\bar 2}\}$ is the pull-back  $T^*\omega$, where  
$$
\omega=\pm 2 \frac{d\, t_1\wedge d\, t_2}{t_{12}- t_{1\bar 2}}.
$$
\end{Proposition}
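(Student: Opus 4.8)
The plan is to reduce the computation of $\omega$ to a single Poisson bracket via Goldman's product formula. Since $\R^*(S_{1,1},\partial S_{1,1}, \SL2)_{\rho_0}$ is two--dimensional and $(t_1,t_2)$ are local coordinates away from $\{t_{12}=t_{1\bar 2}\}$, as established just above the statement, the symplectic form $\omega$ is necessarily a nonvanishing multiple of $d\,t_1\wedge d\,t_2$. On a symplectic surface the coefficient is determined by the Poisson bracket of the coordinate functions exactly as in~\eqref{eqn:poisson}, namely
$$
\omega=-\frac{1}{\{t_1,t_2\}}\, d\,t_1\wedge d\,t_2 .
$$
So the entire problem is to evaluate $\{t_1,t_2\}$ as a function on the relative character variety.

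To do this I would invoke Proposition~\ref{Prop:GoldmanPoisson} with $\alpha=\gamma_1$ and $\beta=\gamma_2$. On $S_{1,1}$ the standard generators are represented by simple closed curves meeting transversally in a single point $p$, so the sum in Goldman's formula has exactly one term and $\epsilon(p,\gamma_1,\gamma_2)=\pm 1$. Writing $A=\rho(\gamma_1)$ and $B=\rho(\gamma_2)$, Proposition~\ref{Prop:GoldmanPoisson} gives
$$
\{t_1,t_2\}=\pm\,\mathcal{B}\big(\operatorname{\mathbf T}(A),\operatorname{\mathbf T}(B)\big)=\mp\operatorname{tr}\big(\operatorname{\mathbf T}(A)\,\operatorname{\mathbf T}(B)\big),
$$
where the second equality uses $\mathcal{B}(X,Y)=-\operatorname{tr}(XY)$.

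The remaining step is the trace identity already recorded in~\eqref{eqn:traceInvariant}, namely $\operatorname{tr}(\operatorname{\mathbf T}(A)\operatorname{\mathbf T}(B))=\tfrac12\operatorname{tr}(AB-AB^{-1})=\tfrac12(t_{12}-t_{1\bar 2})$. Substituting this into the bracket yields $\{t_1,t_2\}=\mp\tfrac12(t_{12}-t_{1\bar 2})$, and plugging into the displayed relation for $\omega$ produces
$$
\omega=\pm 2\,\frac{d\,t_1\wedge d\,t_2}{t_{12}-t_{1\bar 2}},
$$
which is the claim; all the intermediate signs are harmless because the statement is only asserted up to $\pm$.

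The step I expect to require the most care is the geometric input feeding Goldman's formula: one must check that $\gamma_1,\gamma_2$ are realized as simple closed curves meeting in a single transverse double point, which is exactly the standard picture of $S_{1,1}$ as a one--holed torus, and that the bracket computed on the full character variety restricts correctly to the symplectic leaf $\R^*(S_{1,1},\partial S_{1,1}, \SL2)_{\rho_0}$, so that $\{t_1,t_2\}$ may legitimately be read as the Poisson bracket of the two coordinate functions on that leaf. Everything after that is a direct substitution of the already--established trace identities.
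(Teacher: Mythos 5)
Your proposal is correct and follows exactly the paper's own argument: compute $\{t_1,t_2\}$ via Goldman's product formula at the single intersection point of $\gamma_1$ and $\gamma_2$, evaluate it as $\pm\tfrac12(t_{12}-t_{1\bar 2})$ using the trace identity \eqref{eqn:traceInvariant}, and convert to the symplectic form via \eqref{eqn:poisson}. Nothing is missing.
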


\begin{proof}
For $[\rho]\in\R^*(S_{1,1},\partial S_{1,1}, \SL2)_{\rho_0}\setminus\{t_{12}= t_{1\bar 2}\}$ we put
$A=\rho(\gamma_1)$ and $B=\rho(\gamma_2)$. As $\gamma_1$ and $\gamma_2$ intersect in a single point,
by Proposition~\ref{Prop:GoldmanPoisson} and \eqref{eqn:traceInvariant}
 the Poisson bracket, for $\mathcal B(X,Y)= -\operatorname{tr} (X\, Y)$,
 between trace functions is 
$$
\{t_1,t_2\}([\rho])=\pm \operatorname{tr}(\operatorname{\mathbf T}(A)\, \operatorname{\mathbf T}(B))
=\pm\frac12(t_{12}-t_{1\bar 2}).
$$
The proposition follows from Equation~\eqref{eqn:poisson}.
\end{proof}

\begin{Remark}
  From Proposition~\ref{Prop:symplectic11} we can compute again the volume form on $\R^*(F_2,\SL2)$,
already found in Corollary~\ref{cor:C3}.
 Namely, by   Theorem~\ref{Thm:vol}, Proposition~\ref{Prop:symplectic11}, and Corollary~\ref{cor:dtrace},
 since the commutator $\gamma_1\gamma_2\gamma_1^{-1}\gamma_2^{-1}$ is the peripheral element,
\begin{equation}
 \label{eqn:OmegaS11}
\Omega_{ F_2 } =\Omega_{ S_{1,1} } =\pm 2\sqrt{2}\,\frac{d\, t_1\wedge d\, t_2\wedge d\, t_{12\bar 1\bar 2}}{   t_{12}-t_{1 \bar 2}  }  . 
\end{equation}
  Differentiating \eqref{eqn:commutator}, we get 
\begin{equation}
 \label{eqn:dcommutator}
d\, t_{12\bar 1\bar2} = ( 2 t_1-t_2t_{12}) d\, t_1 + ( 2 t_2-t_1t_{12}) d\, t_2 + ( 2 t_{12}-t_1t_{2}) d\, t_{12}\, ,   
\end{equation}
thus, as $t_1t_2=t_{12}+t_{1\bar 2}$, by replacing \eqref{eqn:dcommutator} in \eqref{eqn:OmegaS11}:
 $$
\Omega_{ F_2 } =\pm 2\sqrt{2}\, d\, t_1\wedge d\, t_2\wedge d\, t_{12}  .
$$
\end{Remark}

\subsection{A planar surface with four boundary components}
\label{sec:S04}

Let  $S_{0,4}$ denote the planar surface with four boundary components and let  $\lambda$ and $\mu$ be two simple 
closed curves so that each one divides $S_{0,4}$ in two pairs of pants and they intersect  in precisely two points.
Chose also one of the intersections points as
a base point for the fundamental group.

Orient the curves $\lambda$ and $\mu$ and obtain two new oriented curves $\alpha$ and $\beta$, by changing both
crossings in a way compatible with the orientation, according to Figures~\ref{Fig:ab} and \ref{Fig:ba}.

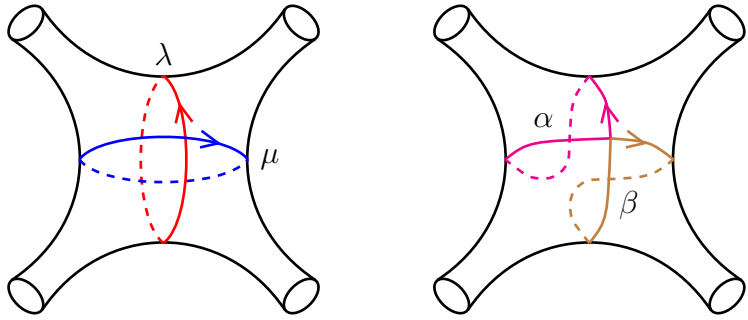
\begin{figure}[h]
\begin{center}
\begin{tikzpicture}[scale=.4]

\begin{scope}[shift={(-14,0)}]
\draw[line width=1pt] (-4, 5) .. controls (-2,2) and (2,2) ..  (4, 5);
\draw[line width=1pt] (5, -4) .. controls (2,-2) and (2,2) ..  (5,4);
\draw[line width=1pt] (-4, -5) .. controls (-2,-2) and (2,-2) ..  (4, -5);
\draw[line width=1pt] (-5, -4) .. controls (-2,-2) and (-2,2) ..  (-5,4);
\draw[line width=1pt,rotate=45]  (6,0) arc (-180:180:.4 and .7);
\draw[line width=1pt,rotate=135]  (6,0) arc (-180:180:.4 and .7);
\draw[line width=1pt,rotate=225]  (6,0) arc (-180:180:.4 and .7);
\draw[line width=1pt,rotate=-45]  (6,0) arc (-180:180:.4 and .7);
   \draw[line width=1pt, red, dashed] (0, -2.77) .. controls (-1,-2) and  (-1,2) ..  (0, 2.77);
  \draw[line width=1pt, blue, dashed] ( -2.77, 0) .. controls (-2,-1) and (2,-1) ..  (2.77,0);
 \draw[line width=1pt, red] (0, -2.77) .. controls (1,-2) and  (1,2) ..  (0, 2.77);
  \draw[line width=1pt, blue] ( -2.77, 0) .. controls (-2,1) and (2,1) ..  (2.77,0);
 \draw[line width=1pt, red] (0.4,1.2)--(0.54,1.8)--(0.95,1.25);
  \draw[line width=1pt, blue] (1.2,0.4)--(1.8,0.54)--(1.25,0.95);
\node at (3.5,0) {$\mu$};
\node at (0,3.5) {$\lambda$};
\end{scope}
\draw[line width=1pt] (-4, 5) .. controls (-2,2) and (2,2) ..  (4, 5);
\draw[line width=1pt] (5, -4) .. controls (2,-2) and (2,2) ..  (5,4);
\draw[line width=1pt] (-4, -5) .. controls (-2,-2) and (2,-2) ..  (4, -5);
\draw[line width=1pt] (-5, -4) .. controls (-2,-2) and (-2,2) ..  (-5,4);
\draw[line width=1pt,rotate=45]  (6,0) arc (-180:180:.4 and .7);
\draw[line width=1pt,rotate=135]  (6,0) arc (-180:180:.4 and .7);
\draw[line width=1pt,rotate=225]  (6,0) arc (-180:180:.4 and .7);
\draw[line width=1pt,rotate=-45]  (6,0) arc (-180:180:.4 and .7);
 \draw[line width=1pt, magenta] (0.7,0.7) .. controls  (0.6 , 2)  ..  (0, 2.77);
 \draw[line width=1pt, magenta] (0.7,0.7) .. controls   (-2, 0.6)  .. ( -2.77, 0);
 \draw[line width=1pt, magenta, rounded corners=10pt, dashed]    (0, 2.77) .. controls (-0.6 , 2) .. (-0.7,-0.7) .. controls   (-2, -0.6)  .. ( -2.77, 0)  ;
 \draw[line width=1pt, magenta] (0.4,1.2)--(0.54,1.8)--(0.95,1.25);
\draw[line width=1pt, brown] (0.7,0.7) .. controls   (2,.6) ..  (2.77,0);
\draw[line width=1pt, brown, rounded corners=10pt, dashed] (0, -2.77) ..  controls (-.6,-2)  ..   (-0.7,-0.7) .. controls   (2,-.6) ..  (2.77,0);
\draw[line width=1pt, brown] (0.7,0.7) .. controls   (.6,-2) ..  (0,-2.77);
  \draw[line width=1pt, brown] (1.2,0.4)--(1.8,0.54)--(1.25,0.95);
\node at (-1.5,1.3) {$\alpha$} ;
\node at (1.3,-1.5) {$\beta$} ;
 \end{tikzpicture}
\end{center}
 \caption{Construction of $\alpha$ and $\beta$ from an orientation of $\lambda$ and $\mu$}
\label{Fig:ab}
\end{figure}

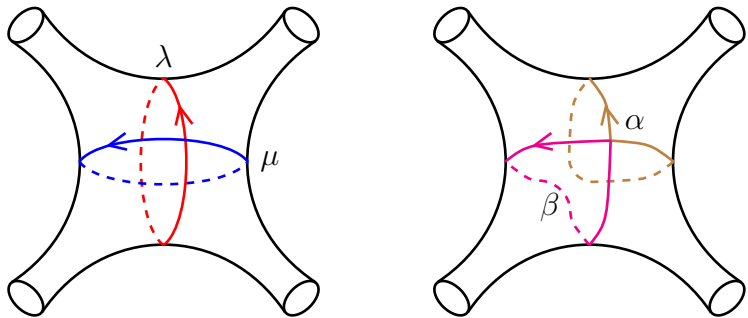
\begin{figure}[h]
\begin{center}
\begin{tikzpicture}[scale=.4]

\begin{scope}[shift={(-14,0)}]
\draw[line width=1pt] (-4, 5) .. controls (-2,2) and (2,2) ..  (4, 5);
\draw[line width=1pt] (5, -4) .. controls (2,-2) and (2,2) ..  (5,4);
\draw[line width=1pt] (-4, -5) .. controls (-2,-2) and (2,-2) ..  (4, -5);
\draw[line width=1pt] (-5, -4) .. controls (-2,-2) and (-2,2) ..  (-5,4);
\draw[line width=1pt,rotate=45]  (6,0) arc (-180:180:.4 and .7);
\draw[line width=1pt,rotate=135]  (6,0) arc (-180:180:.4 and .7);
\draw[line width=1pt,rotate=225]  (6,0) arc (-180:180:.4 and .7);
\draw[line width=1pt,rotate=-45]  (6,0) arc (-180:180:.4 and .7);
   \draw[line width=1pt, red, dashed] (0, -2.77) .. controls (-1,-2) and  (-1,2) ..  (0, 2.77);
  \draw[line width=1pt, blue, dashed] ( -2.77, 0) .. controls (-2,-1) and (2,-1) ..  (2.77,0);
 \draw[line width=1pt, red] (0, -2.77) .. controls (1,-2) and  (1,2) ..  (0, 2.77);
  \draw[line width=1pt, blue] ( -2.77, 0) .. controls (-2,1) and (2,1) ..  (2.77,0);
 \draw[line width=1pt, red] (0.4,1.2)--(0.54,1.8)--(0.95,1.25);
  \draw[line width=1pt, blue] (-1.2,0.4)--(-1.8,0.54)--(-1.25,0.95);
\node at (3.5,0) {$\mu$};
\node at (0,3.5) {$\lambda$};
\end{scope}
\draw[line width=1pt] (-4, 5) .. controls (-2,2) and (2,2) ..  (4, 5);
\draw[line width=1pt] (5, -4) .. controls (2,-2) and (2,2) ..  (5,4);
\draw[line width=1pt] (-4, -5) .. controls (-2,-2) and (2,-2) ..  (4, -5);
\draw[line width=1pt] (-5, -4) .. controls (-2,-2) and (-2,2) ..  (-5,4);
\draw[line width=1pt,rotate=45]  (6,0) arc (-180:180:.4 and .7);
\draw[line width=1pt,rotate=135]  (6,0) arc (-180:180:.4 and .7);
\draw[line width=1pt,rotate=225]  (6,0) arc (-180:180:.4 and .7);
\draw[line width=1pt,rotate=-45]  (6,0) arc (-180:180:.4 and .7);
\draw[line width=1pt, brown] (0.7,0.7) .. controls  (0.6 , 2)  ..  (0, 2.77);
 \draw[line width=1pt, brown, rounded corners=10pt, dashed]   (2.77,0)  .. controls   (2,-.6) ..   (-0.7,-0.7) .. controls (-0.6 , 2)  ..  (0, 2.77);
  \draw[line width=1pt, brown] (0.4,1.2)--(0.54,1.8)--(0.95,1.25);
  \draw[line width=1pt, brown] (0.7,0.7) .. controls   (2,.6) ..  (2.77,0);
 \draw[line width=1pt, magenta] (0.7,0.7) .. controls   (-2, 0.6)  .. ( -2.77, 0);
 \draw[line width=1pt, magenta, rounded corners=10pt, dashed]    (0, -2.77) .. controls (-.6,-2) ..   (-0.7,-0.7) .. controls  (-2, -0.6)  .. ( -2.77, 0);
\draw[line width=1pt, magenta] (0.7,0.7) .. controls   (.6,-2) ..  (0,-2.77);
  \draw[line width=1pt, magenta] (-1.2,0.4)--(-1.8,0.54)--(-1.25,0.95);
\node at (1.5,1.3) {$\alpha$} ;
\node at (-1.3,-1.5) {$\beta$} ;
 \end{tikzpicture}
\end{center}
\caption{Construction of $\alpha$ and $\beta$ from another orientation of $\lambda$ and $\mu$}
\label{Fig:ba}
\end{figure}

Since the curves are oriented, we may talk about the elements they represent in $\pi_1(S_{0,4})$,
in particular the products $\lambda\mu$ and $\alpha\beta$ and their trace functions,
$t_{\lambda\mu}$ and $t_{\alpha\beta}$,
that depend on the orientations. 

\begin{Lemma}
\label{lemma:difindep}
Up to sign, the difference  $t_{\lambda\mu}-t_{\alpha\beta}$
 is independent of the choice of orientations of $\lambda$ and $\mu$. The sign depends on
whether we change one (-) or both (+) orientations.
\end{Lemma}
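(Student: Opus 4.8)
The plan is to reduce everything to the $\SL2$ trace identity \eqref{eqn:trace} after encoding $\lambda,\mu,\alpha,\beta$ as words in the holonomies of arcs. First I would cut $\lambda$ and $\mu$ at the two intersection points $p_1,p_2$: write $\lambda=a_1a_2$ and $\mu=b_1b_2$, where $a_1,b_1$ run from $p_1$ to $p_2$ and $a_2,b_2$ from $p_2$ to $p_1$, all directed by the chosen orientations. Fixing frames at $p_1,p_2$, let $P,Q,R,S\in\SL2$ denote the holonomies of $a_1,a_2,b_1,b_2$. Tracing through the oriented (Seifert) smoothings at $p_1$ and $p_2$ in Figure~\ref{Fig:ab} shows that the two resolved curves are $\alpha=a_1b_2$ and $\beta=b_1a_2$; hence $\lambda\mu\mapsto PQRS$ and $\alpha\beta\mapsto PSRQ$, so that $t_{\lambda\mu}-t_{\alpha\beta}=\operatorname{tr}(PQRS)-\operatorname{tr}(PSRQ)$.

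For the case of reversing both orientations I would argue directly from invariance of traces under inversion together with cyclicity. Reversing both orientations replaces $\lambda,\mu$ by $\lambda^{-1},\mu^{-1}$ and, since it also reverses the smoothed curves, $\alpha,\beta$ by $\alpha^{-1},\beta^{-1}$. Then the new product $\lambda^{-1}\mu^{-1}=(\mu\lambda)^{-1}$ has trace $\operatorname{tr}(\mu\lambda)=\operatorname{tr}(\lambda\mu)=t_{\lambda\mu}$, and likewise $\operatorname{tr}(\alpha^{-1}\beta^{-1})=\operatorname{tr}(\alpha\beta)=t_{\alpha\beta}$; thus the difference is unchanged, which is the $+$ (even) case.

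For the case of reversing a single orientation, say that of $\mu$ (passing from Figure~\ref{Fig:ab} to Figure~\ref{Fig:ba}), I would recompute the smoothing: with $b_1,b_2$ now directed oppositely the resolved curves become $\alpha'=a_1b_1^{-1}$ and $\beta'=b_2^{-1}a_2$, while $\lambda\mu^{-1}\mapsto PQS^{-1}R^{-1}$, so the new difference is $\operatorname{tr}(PQS^{-1}R^{-1})-\operatorname{tr}(PR^{-1}S^{-1}Q)$. I would then show this equals $-\bigl(\operatorname{tr}(PQRS)-\operatorname{tr}(PSRQ)\bigr)$ by applying \eqref{eqn:trace} twice. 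Grouping the first terms, $\operatorname{tr}(PQRS)+\operatorname{tr}(PQS^{-1}R^{-1})=\operatorname{tr}\bigl((PQ)(RS)\bigr)+\operatorname{tr}\bigl((PQ)(RS)^{-1}\bigr)=\operatorname{tr}(PQ)\operatorname{tr}(RS)=t_\lambda t_\mu$; and, after the cyclic rearrangement $\operatorname{tr}(PSRQ)=\operatorname{tr}\bigl((SR)(QP)\bigr)$ and $\operatorname{tr}(PR^{-1}S^{-1}Q)=\operatorname{tr}\bigl((SR)^{-1}(QP)\bigr)$, grouping gives $\operatorname{tr}(PSRQ)+\operatorname{tr}(PR^{-1}S^{-1}Q)=\operatorname{tr}(QP)\operatorname{tr}(SR)=t_\lambda t_\mu$ as well. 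Substituting $\operatorname{tr}(PQS^{-1}R^{-1})=t_\lambda t_\mu-\operatorname{tr}(PQRS)$ and $\operatorname{tr}(PR^{-1}S^{-1}Q)=t_\lambda t_\mu-\operatorname{tr}(PSRQ)$ into the new difference makes the $t_\lambda t_\mu$ terms cancel and yields exactly the sign change. The behavior under reversing only $\lambda$ is identical by symmetry, so the sign is governed by the parity of the number of reversed orientations, i.e.\ $-$ for one and $+$ for both.

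The main obstacle I expect is bookkeeping rather than algebra: correctly reading the words $\alpha=a_1b_2$, $\beta=b_1a_2$ (and their analogues $\alpha'=a_1b_1^{-1}$, $\beta'=b_2^{-1}a_2$ after reversal) off the oriented smoothings in Figures~\ref{Fig:ab} and~\ref{Fig:ba}, and keeping the frame conventions at $p_1,p_2$ consistent so that the passage to the holonomies $P,Q,R,S$ is unambiguous. Once the four words are correctly identified, the two applications of the trace identity \eqref{eqn:trace} are routine and make both the invariance and the sign change transparent.
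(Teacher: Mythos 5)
Your proof is correct and rests on exactly the same mechanism as the paper's: two applications of the identity $\operatorname{tr}(A)\operatorname{tr}(B)=\operatorname{tr}(AB)+\operatorname{tr}(AB^{-1})$ produce $t_\lambda t_\mu$ on both sides, and subtracting cancels it to give the sign flip. The only cosmetic difference is that you phrase it invariantly via the arc holonomies $P,Q,R,S$ (and spell out the trivial "both reversed" case), whereas the paper computes with the explicit words $\lambda=\gamma_1\gamma_2$, $\mu=\gamma_2\gamma_3$; the content is the same.
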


\begin{Proposition}
\label{Prop:symplectic40} 
Let $\rho_0 \in R^*(S, \SL2)$ be a good, $\partial$-regular representation
such that $t_{\lambda\mu}(\rho_0)\neq t_{\alpha\beta}(\rho_0)$. Then:
\begin{enumerate}[(1)]
 \item the map $T=(t_\lambda,t_\mu)\colon  \R^*(S_{0,4},\partial S_{0,4}, \SL2)_{\rho_0}\setminus\{t_{\lambda\mu}=t_{\alpha\beta}\} \to \CC^2 $ is  a local parameterization.
 \item 
 the symplectic form on $\R^*(S_{0,4},\partial S_{0,4}, \SL2)_{\rho_0}\setminus\{t_{\lambda\mu}=t_{\alpha\beta}\}$ is the pullback $T^*\omega$ where
\[
  \omega = \pm   \frac{d\, t_\lambda\wedge d\, t_\mu}{  t_{\lambda\mu}-t_{\alpha\beta} } .
 \]
\end{enumerate} 
\end{Proposition}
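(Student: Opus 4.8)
The plan is to follow closely the template of Proposition~\ref{Prop:symplectic11}, the only genuinely new feature being that $\lambda$ and $\mu$ now meet in \emph{two} points rather than one, so that Goldman's formula produces two terms which must be combined. First I would record, via Proposition~\ref{prop:relsmooth}, that $\R^*(S_{0,4},\partial S_{0,4},\SL2)_{\rho_0}$ is two-dimensional: here $\chi(S_{0,4})=-2$, $\dim\SL2=3$, $b=4$, $r=1$, giving $6-4=2$. Thus both $\{t_\lambda,t_\mu\}$ and $\omega$ live on a symplectic surface and Equation~\eqref{eqn:poisson} applies. Both assertions (1) and (2) will then follow at once from a single computation of the bracket $\{t_\lambda,t_\mu\}$: on a two-dimensional symplectic manifold $\{t_\lambda,t_\mu\}$ is nonzero exactly where $d\,t_\lambda\wedge d\,t_\mu\neq 0$, so establishing $\{t_\lambda,t_\mu\}=\mp(t_{\lambda\mu}-t_{\alpha\beta})$ simultaneously shows that $T$ is a local parameterization away from $\{t_{\lambda\mu}=t_{\alpha\beta}\}$ (proving (1)) and, through \eqref{eqn:poisson}, yields $\omega=\pm(t_{\lambda\mu}-t_{\alpha\beta})^{-1}\,d\,t_\lambda\wedge d\,t_\mu$ (proving (2)).

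The heart of the argument is therefore the bracket computation. I would apply Proposition~\ref{Prop:GoldmanPoisson} at the two intersection points $p_1,p_2$ and use \eqref{eqn:traceInvariant} to rewrite each summand $\operatorname{tr}(\operatorname{\mathbf T}(\cdot)\,\operatorname{\mathbf T}(\cdot))$ as $\tfrac12$ times the difference of the traces of an oriented and a disoriented resolution of the crossing, so that $\{t_\lambda,t_\mu\}$ becomes a signed sum of four trace functions. At this stage I would invoke that $\lambda$ and $\mu$ are separating, hence null-homologous, curves: their algebraic intersection number vanishes, forcing the two local signs to be opposite, $\epsilon(p_1)=-\epsilon(p_2)$. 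The decisive and most delicate step is then to identify the four resolution curves. The claim to verify is that the oriented smoothing at $p_1$ and the disoriented smoothing at $p_2$ both represent the free homotopy class of $\lambda\mu$, while the disoriented smoothing at $p_1$ and the oriented smoothing at $p_2$ both represent $\alpha\beta$; this is precisely what Figures~\ref{Fig:ab} and \ref{Fig:ba} encode, where $\alpha$ and $\beta$ are produced by resolving \emph{both} crossings compatibly with orientation. Granting this matching, the opposite signs make the two contributions reinforce, and the factors $\tfrac12$ double rather than cancel, collapsing the four terms to $\{t_\lambda,t_\mu\}=\mp(t_{\lambda\mu}-t_{\alpha\beta})$, whence \eqref{eqn:poisson} gives the stated form.

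The step I expect to be the main obstacle is exactly this last geometric identification of the smoothings with the conjugacy classes of $\lambda\mu$ and $\alpha\beta$: matching each oriented and disoriented resolution at $p_1$ and $p_2$ to the correct free homotopy class requires tracking the based loops $\lambda_i,\mu_i\in\pi_1(S_{0,4},p_i)$ through both base points and bookkeeping the decomposition of $\lambda$ and $\mu$ into the four arcs joining $p_1$ and $p_2$. This is where Lemma~\ref{lemma:difindep} is indispensable: it guarantees that, although the individual products depend on the choices of orientation and of base point, the combination $t_{\lambda\mu}-t_{\alpha\beta}$ is intrinsic up to the sign already absorbed into the $\pm$, so the final answer does not depend on these bookkeeping choices. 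Once the identification is in place, the trace relation \eqref{eqn:trace} confirms consistency, and the proof concludes as indicated above.
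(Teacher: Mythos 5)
Your overall strategy is sound, and your route to assertion~(1) is a legitimate shortcut that differs from the paper: instead of the Mayer--Vietoris/bending computation used there, you observe that on the two-dimensional symplectic leaf the nonvanishing of $\{t_\lambda,t_\mu\}$ (computed intrinsically by Proposition~\ref{Prop:GoldmanPoisson}) is equivalent to $d\,t_\lambda\wedge d\,t_\mu\neq 0$, so (1) and (2) follow from the single bracket computation. That part is fine. The sign observation $\epsilon(p_1)=-\epsilon(p_2)$ (because $\lambda$ is separating) is also correct and matches the paper.

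The genuine gap is in the step you yourself flag as decisive: the claimed matching of the four Goldman resolution terms by free homotopy class is false. With the paper's conventions ($\lambda=\gamma_1\gamma_2$, $\mu=\gamma_2\gamma_3$ based at $p_1$, and $\mu_2=\gamma_1\gamma_2\gamma_3\gamma_1^{-1}$ after transporting along half of $\lambda$), the four classes are: oriented at $p_1$ is $\lambda_1\mu_1\sim\gamma_1\gamma_2\gamma_2\gamma_3$ (trace $t_{1223}=t_{\lambda\mu}$), oriented at $p_2$ is $\lambda_2\mu_2\sim\gamma_2\gamma_1\gamma_2\gamma_3$ (trace $t_{2123}=t_{\alpha\beta}$), disoriented at $p_1$ is $\lambda_1\mu_1^{-1}\sim\gamma_1\gamma_2\gamma_3^{-1}\gamma_2^{-1}$ (trace $t_{12\bar3\bar2}$), and disoriented at $p_2$ is $\lambda_2\mu_2^{-1}\sim\gamma_1\gamma_3^{-1}$ (trace $t_{1\bar3}$). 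Comparing abelianizations, $\gamma_1\gamma_3^{-1}$ is not conjugate to $\gamma_1\gamma_2^2\gamma_3$, and $\gamma_1\gamma_2\gamma_3^{-1}\gamma_2^{-1}$ is not conjugate to $\alpha\beta$; so the disoriented smoothings do \emph{not} represent $\lambda\mu$ and $\alpha\beta$, and $t_{1\bar3}\neq t_{\lambda\mu}$, $t_{12\bar3\bar2}\neq t_{\alpha\beta}$ as functions. What actually collapses the four terms is not a coincidence of conjugacy classes but the $\SL2$ trace identity \eqref{eqn:trace}: applying $\operatorname{tr}(A)\operatorname{tr}(B)=\operatorname{tr}(AB)+\operatorname{tr}(AB^{-1})$ at each base point gives $t_\lambda t_\mu=t_{1223}+t_{12\bar3\bar2}=t_{2123}+t_{1\bar3}$, whence $t_{1223}-t_{2123}=t_{1\bar3}-t_{12\bar3\bar2}=t_{\lambda\mu}-t_{\alpha\beta}$; this is exactly the content of Lemma~\ref{lemma:difindep} (equation~\eqref{eqn:t1223-t1232}), and it is what turns the signed four-term sum into $\mp(t_{\lambda\mu}-t_{\alpha\beta})$. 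Your final formula is therefore correct, but the justification of the crucial identity must be replaced by this trace-relation argument (the relation you relegate to a ``consistency check'' at the end is in fact the whole mechanism).
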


We fix the notation for both proofs. 
The fundamental group of $S_{0,4}$ is freely generated by three elements $\gamma_1$, $\gamma_2$, and $\gamma_3$,
and the peripheral curves are represented by  $\gamma_1$, $\gamma_2$, $\gamma_3$, and  $\gamma_1\gamma_2\gamma_3$,
see Figure~\ref{Fig:loops}.
We shall assume that the orientations are so that $\lambda=\gamma_1\gamma_2$ and $\mu=\gamma_2\gamma_3$. 
With this choice of orientation, $\alpha= \gamma_1\gamma_2\gamma_3$ and $\beta=\gamma_2$, so
$$
t_{\lambda\mu}-t_{\alpha\beta}=  t_{1223}-t_{1232}.
$$

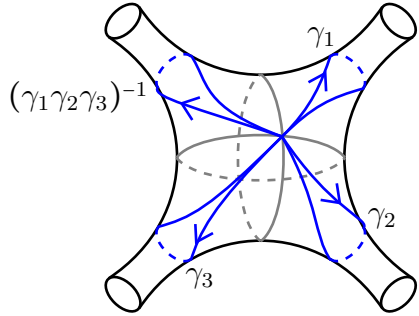
\begin{figure}[h]
\begin{center}
\begin{tikzpicture}[scale=.4]


\draw[line width=1pt] (-4, 5) .. controls (-2,2) and (2,2) ..  (4, 5);
\draw[line width=1pt] (5, -4) .. controls (2,-2) and (2,2) ..  (5,4);
\draw[line width=1pt] (-4, -5) .. controls (-2,-2) and (2,-2) ..  (4, -5);
\draw[line width=1pt] (-5, -4) .. controls (-2,-2) and (-2,2) ..  (-5,4);
\draw[line width=1pt,rotate=45]  (6,0) arc (-180:180:.4 and .7);
\draw[line width=1pt,rotate=135]  (6,0) arc (-180:180:.4 and .7);
\draw[line width=1pt,rotate=225]  (6,0) arc (-180:180:.4 and .7);
\draw[line width=1pt,rotate=-45]  (6,0) arc (-180:180:.4 and .7);
   \draw[line width=1pt, gray, dashed] (0, -2.77) .. controls (-1,-2) and  (-1,2) ..  (0, 2.77);
  \draw[line width=1pt, gray, dashed] ( -2.77, 0) .. controls (-2,-1) and (2,-1) ..  (2.77,0);
 \draw[line width=1pt, gray] (0, -2.77) .. controls (1,-2) and  (1,2) ..  (0, 2.77);
  \draw[line width=1pt, gray] ( -2.77, 0) .. controls (-2,1) and (2,1) ..  (2.77,0);
\draw[line width=1pt, blue] (0.7,0.7) .. controls (1.7,1.7) and (2,2.5) .. (2.3,3.4);
\draw[line width=1pt, blue] (0.7,0.7) .. controls (1.7,1.7) and (2.3,2) .. (3.4,2.3);
\draw[line width=1pt,blue, dashed, rotate=45]  (4,-.75) arc (-90:90:.4 and .75);
\draw[line width=1pt, blue] (1.57,2.5)--(2.12,2.8)--(2.15,2.2);
\node at (2,4) {$\gamma_1$};
\draw[line width=1pt, blue] (0.7,0.7) .. controls (2,-1.7) and (1.7,-2.5) .. (2.3,-3.4);
\draw[line width=1pt, blue] (0.7,0.7) .. controls (2.3,-1.7) and (3.5,-2) .. (3.4,-2.3);
\draw[line width=1pt,blue, dashed, rotate=-45]  (4,-.75) arc (-90:90:.4 and .75);
\draw[line width=1pt, blue] (1.97,-1.4)--(2.60,-1.5)--(2.50,-.85);
\node at (4,-2) {$\gamma_2$};
\draw[line width=1pt, blue] (0.7,0.7) .. controls (-1.7,-1.7) and (-2,-2.5) .. (-2.3,-3.4);
\draw[line width=1pt, blue] (0.7,0.7) .. controls (-1.7,-1.7) and (-2.3,-2) .. (-3.4,-2.3);
\draw[line width=1pt,blue, dashed, rotate=-135]  (4,-.75) arc (-90:90:.4 and .75);
\draw[line width=1pt, blue] (-1.57,-2.5)--(-2.12,-2.8)--(-2.15,-2.2);     
\node at (-2,-4) {$\gamma_3$};
\draw[line width=1pt, blue] (0.7,0.7) .. controls (-2,1.7) and (-1.7,2.5) .. (-2.3,3.4);
\draw[line width=1pt, blue] (0.7,0.7) .. controls (-2.3,1.7) and (-3.5,2) .. (-3.4,2.3);
\draw[line width=1pt,blue, dashed, rotate=135]  (4,-.75) arc (-90:90:.4 and .75);
\draw[line width=1pt, blue] (-2,2.0)--(-2.60,1.8)--(-2.25,1.25);
\node at (-6,2) {$(\gamma_1\gamma_2\gamma_3)^{-1}$};
%
%
%
%
%
%
\end{tikzpicture}
\end{center}
\caption{The loops $\gamma_1$, $\gamma_2$, and $\gamma_3$ that represent the generators of  $\pi_1(S_{0,4})$.}
\label{Fig:loops}
\end{figure}

\begin{proof}[Proof of Lemma~\ref{lemma:difindep}]
It suffices to change the orientation of a single curve, so we follow the examples of Figures~\ref{Fig:ab}
and \ref{Fig:ba}.
 If we change the orientation of $\mu$ then  $\mu=\gamma_3^{-1}\gamma_2^{-1}$,  $\alpha= \gamma_1$,
 and $\beta= \gamma_3^{-1}$.
 We aim to prove
 \begin{equation}
 \label{eqn:t1223-t1232}
 t_{1223}-t_{1232}= - (t_{12\bar3\bar2}-t_{1\bar 3})
\end{equation}
(with negative sign, because we change the orientation of a single curve). 
From Equality \eqref{eqn:trace}  
we have:
\begin{equation}
 \label{eqn:t12t23}
\begin{array}{rcl}
 t_{12}t_{23} & = & t_{1223}+t_{12\bar3\bar2} \\
 t_{12}t_{32} & = & t_{1232}+ t_{1\bar 3},
\end{array} 
\end{equation}
Then equality \eqref{eqn:t1223-t1232} follows by subtracting in \eqref{eqn:t12t23} and using $t_{32}=t_{23}$.
 \end{proof}

 \begin{proof}[Proof of Proposition~\ref{Prop:symplectic40}]

(1) We shall use a computation in cohomology, first by cutting the surface $S_{0,4}$ along $\lambda=\gamma_1\gamma_2$ into two pairs of pants
$P_1$ and $P_2$, with $\pi_1(P_1)=\langle \gamma_1,\gamma_2 \rangle $ and 
$\pi_1(P_2)=\langle \gamma_3,\gamma_1\gamma_2 \rangle $. 
Notice that for $[\rho]\in \R^*(S_{0,4},\partial S_{0,4}, \SL2)_{\rho_0}\setminus\{t_{\lambda\mu}=t_{\alpha\beta}\}$ we have that $\rho\vert_{ \pi_1(P_i)}$ is nonabelian.
Suppose that, contrary to our claim,  $\rho\vert_{ \pi_1(P_1)}$ is abelian that is
 $\rho(\gamma_1)$ and $\rho(\gamma_2)$ commute. Then 
\[
t_{\lambda\mu}([\rho]) = t_{1223}([\rho]) = t_{2123}([\rho])=t_{1232}([\rho]) = t_{\alpha\beta}([\rho])\,,
\]
contradicting
the hypothesis. This argument also shows that $\rho\vert_{ \pi_1(P_2)}$ is nonabelian.

As $\rho\vert_{ \pi_1(P_i)}$ is nonabelian, $H^0(\pi_1(P_i);\operatorname{Ad} {\rho})=0$. Hence, we obtain the following Mayer-Vietoris exact sequence:
$$
0\to
H^0(\lambda;\operatorname{Ad} {\rho})\overset \beta \to H^1(S;\operatorname{Ad} {\rho})\to 
H^1(P_1 ;\operatorname{Ad} {\rho})\oplus
H^1(P_2; \operatorname{Ad} {\rho})\to H^1(\lambda ;\operatorname{Ad} {\rho})\to 0\,.
$$
Using the local parameterization of a pair of paints, this sequence yields that the tangent space to $\R^*(S,\SL2)$ at $\rho$
is generated by the infinitesimal deformations $\partial_{t_1}$, $\partial_{t_2}$, $\partial_{t_3}$, $\partial_{t_{123}}$,
$\partial_{t_{12}}$ and $\beta(a)$, 
where $0\neq a\in H^0(\lambda;\operatorname{Ad} {\rho})\cong \sl2^{\operatorname{Ad}\rho(\gamma) }$.
Hence, the tangent space to  $\R^*(S,\partial S,\SL2)_{\rho_0}$ at $\rho$ is generated by $\partial_{t_{12}}=\partial_{t_\lambda}$
and $\beta(a)$. Notice that $d\, t_{\lambda} ( \beta(a) )=0$ since
$\beta(a)$ is an infinitesimal bending along $\lambda$. In order to prove that $(t_\mu,t_\lambda)$ are local parameters at $\rho$
we must show that
$d\, t_{\mu} ( \beta(a) )\neq 0 $.

Next we compute $d\, t_{\mu} ( \beta(a) )$. By setting $A_i=\rho(\gamma_i)$, we obtain
$\rho(\lambda)=A_1A_2$ and
we can chose $a=\frac{1}{2}(A_1A_2-A_2^{-1}A_1^{-1} )$.
As $\lambda$ is a separating curve,  the infinitesimal bending is the derivative  respect to $\varepsilon$ of the path of representations:
$$
\begin{array}{rcl}
 \gamma_1& \mapsto &  A_1 \\
 \gamma_2& \mapsto & A_2 \\
 \gamma_3& \mapsto & \big(1+\varepsilon a+ o(\varepsilon)\big)  A_3 \big(1-\varepsilon a+o(\varepsilon)\big)  , 
\end{array}
$$
see \cite[Lemma~5.1]{JohnsonMillson} for details. 
Since $\mu=\gamma_2\gamma_3$ is mapped to $A_2A_3+\varepsilon (A_2aA_3 -A_2A_3 a)+
o(\varepsilon)$, 
we have
$$
d\operatorname{t}_\mu(\beta(a)) =\operatorname{tr}( A_2aA_3-A_2A_3a)=
\frac{1}{ 2}  ( t_{2123}-t_{\bar 13}-t_{2312}+t_{23\bar 2\bar 1}  ) .
$$
By Lemma~\ref{lemma:difindep} and its proof, and using  that the trace is invariant by cyclic permutations and by taking the inverse:
\begin{align*}
t_{\alpha\beta}-t_{\lambda\mu}&=t_{1232}-t_{1223}=t_{2123}-t_{2312}  \\ &=t_{12\bar3\bar2}-t_{1\bar 3}=t_{23\bar2\bar1}  -t_{\bar 1 3} . 
\end{align*}
Thus
$ d\operatorname{t}_\mu(\beta(a)) =t_{\alpha\beta}-t_{\lambda\mu}\neq 0 $. This proves Assertion~(1) of the proposition.
\smallskip

\begin{figure}[h]
\begin{center}
\begin{tikzpicture}[scale=.4]

\begin{scope}[shift={(-14,0)}]
\draw[line width=1pt] (-4, 5) .. controls (-2,2) and (2,2) ..  (4, 5);
\draw[line width=1pt] (5, -4) .. controls (2,-2) and (2,2) ..  (5,4);
\draw[line width=1pt] (-4, -5) .. controls (-2,-2) and (2,-2) ..  (4, -5);
\draw[line width=1pt] (-5, -4) .. controls (-2,-2) and (-2,2) ..  (-5,4);
\draw[line width=1pt,rotate=45]  (6,0) arc (-180:180:.4 and .7);
\draw[line width=1pt,rotate=135]  (6,0) arc (-180:180:.4 and .7);
\draw[line width=1pt,rotate=225]  (6,0) arc (-180:180:.4 and .7);
\draw[line width=1pt,rotate=-45]  (6,0) arc (-180:180:.4 and .7);

   \draw[line width=1pt, red, dashed] (0, -2.77) .. controls (-1,-2) and  (-1,2) ..  (0, 2.77);
  \draw[line width=1pt, blue, dashed] ( -2.77, 0) .. controls (-2,-1) and (2,-1) ..  (2.77,0);
 \draw[line width=1pt, red] (0, -2.77) .. controls (1,-2) and  (1,2) ..  (0, 2.77);
  \draw[line width=1pt, blue] ( -2.77, 0) .. controls (-2,1) and (2,1) ..  (2.77,0);
 \draw[line width=1pt, red] (0.4,1.2)--(0.54,1.8)--(0.95,1.25);
  \draw[line width=1pt, blue] (1.2,0.4)--(1.8,0.54)--(1.25,0.95);
\node at (3.5,0) {$\mu$};
\node at (0,3.5) {$\lambda$};
\end{scope}

\draw[line width=1pt] (-4, 5) .. controls (-2,2) and (2,2) ..  (4, 5);
\draw[line width=1pt] (5, -4) .. controls (2,-2) and (2,2) ..  (5,4);
\draw[line width=1pt] (-4, -5) .. controls (-2,-2) and (2,-2) ..  (4, -5);
\draw[line width=1pt] (-5, -4) .. controls (-2,-2) and (-2,2) ..  (-5,4);
\draw[line width=1pt,rotate=45]  (6,0) arc (-180:180:.4 and .7);
\draw[line width=1pt,rotate=135]  (6,0) arc (-180:180:.4 and .7);
\draw[line width=1pt,rotate=225]  (6,0) arc (-180:180:.4 and .7);
\draw[line width=1pt,rotate=-45]  (6,0) arc (-180:180:.4 and .7);


   \draw[line width=1pt, red, dashed] (-.7, -.7) .. controls   (-.6,2) ..  (0, 2.77);
 \draw[line width=1pt, red] (.7, .7) .. controls   (.5,2) and (.3,2.5) ..  (0, 2.77);
  \filldraw (.7,.7) circle (3pt);
    \filldraw (-.7,-.7) circle (3pt);
   \node at (1.4,.7) {$p_1$}; 
      \node at (-1.5,-.7) {$p_2$}; 
\end{tikzpicture}
\end{center}
\caption{The intersection points $p_1,p_2$ and the arc between them.}
\label{Fig:intersection}
\end{figure}
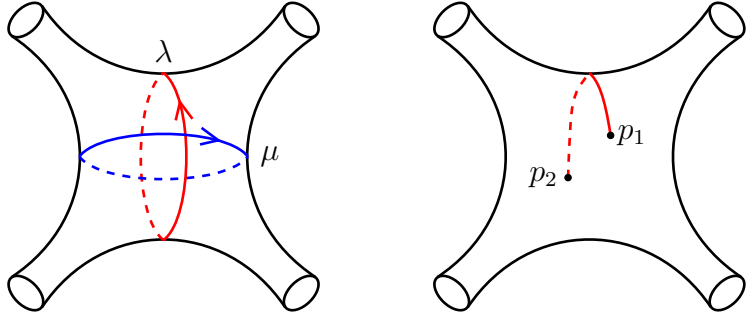

We next prove Assertion~(2).
Let $[\rho]\in \R^*(S_{0,4},\partial S_{0,4}, \SL2)_{\rho_0}\setminus\{t_{\lambda\mu}=t_{\alpha\beta}\}$, and set again $A_i=\rho(\gamma_i)$.
We apply Proposition~\ref{Prop:GoldmanPoisson} to compute the Poisson bracket $\{t_\lambda,t_\mu\}([\rho])$.
 
The curves $\lambda$ and $\mu$ intersect in two points,
$p_1$ and $p_2$, in Figure~\ref{Fig:intersection}. Let $p_1$ be the base point of the fundamental group used in Figure~\ref{Fig:loops}. 
The contribution of $p_1$ is
$$
\epsilon \operatorname{tr}(\operatorname{\mathbf T}(A_1A_2) \operatorname{\mathbf T}(A_2A_3))=
\frac{\epsilon }2\big(t_{1223}([\rho]) -t_{12\bar3\bar 2}([\rho])\big) 
$$
for some  $\epsilon=\pm 1$.
To compute the contribution of $p_2$ we consider an arc from $p_1$ to $p_2$ to relate the base points between fundamental groups.
Assume that this arc is half of $\lambda$, as in Figure~\ref{Fig:intersection}, then 
$\rho_2(\lambda)=\rho(\lambda)=A_1A_2$
and $\rho_2(\mu)=\rho( \gamma_1\gamma_2\gamma_3 \gamma_1^{-1})= A_1 A_2 A_3 A_1^{-1}$. In addition, the orientation of the 
intersection is opposite to the previous one, hence the contribution of $p_2$ is
$$
-\epsilon 
\operatorname{tr}(\operatorname{\mathbf T}(A_1A_2) \operatorname{\mathbf T}( A_1 A_2 A_3  A_1^{-1}))=- \frac{\epsilon }2 \big( t_{2123}([\rho]) -t_{1\bar 3}([\rho])\big)
$$
Hence, for $\mathcal{B}(X,Y)=-\operatorname{tr}(X Y)$ we obtain from Proposition~\ref{Prop:GoldmanPoisson}
\[
\{t_\lambda,t_\mu\}=
-\frac\epsilon2\,(t_{1223} -t_{12\bar3\bar 2}-t_{2123} +t_{1\bar 3}) \,,
\]
and by equation \eqref{eqn:t1223-t1232} we have 
\[
t_{\lambda\mu} - t_{\alpha\beta} = t_{1223} -t_{2123}= -t_{12\bar3\bar 2} +t_{1\bar 3}\,.
\]

Finally, the formula for the symplectic form on the coordinates $(t_\lambda,t_\mu)$ 
follows again from equation~\eqref{eqn:poisson}.
\end{proof}

\section{Volume forms for free groups of higher rank}
\label{section:higherrank}

\subsection{Volume form on $\R^*(F_k,\SL2) $}

We recall the notation $t_{i_1\cdots i_k}$
for the trace function $\operatorname{tr}_\gamma$ of $\gamma=\gamma_{i_1}\cdots \gamma_{i_k}$,
with the convention 
$\gamma_{\bar i}=\gamma_i ^{-1}$.

We start discussing the volume form for the free group of rank three.
Following \cite{AcunaMontesinos},
the variety of characters $X(F_3,\SL2)$ is a  branched covering of $\CC^{6}$. 
More precisely, the branched covering  is given by trace functions: 
\begin{equation}
\label{eqncoordinatesFn}
T = (t_1, t_2, t_{3},   t_{12},  t_{13}, t_{23})\colon X(F_3,\SL2)\to \CC^{6}\, .
 \end{equation}
The branching is given by the variables $t_{123}$ and $t_{213}$, 
as they are the solutions of the quadratic equation
$$
z^2-R z+S=0
$$
for 
\begin{align}
\label{eqn:P}
 R &= t_1 t_{23} + t_2 t_{13} + t_3 t_{12}- t_1 t_2 t_3  \\
\label{eqn:Q}
 S &= t_1^2+t_2^2+t_3^2+t_{12}^2+t_{13}^2+t_{23}^2+t_{12}t_{13}t_{23}- t_1t_2t_{12}- t_1t_3t_{13}- t_2t_3t_{23}-4  .
\end{align}
Recall that the trace is invariant by cyclic permutation of the group elements:
\[
 t_{123}=t_{231}=t_{312} .
 \]
The branching locus is defined by $t_{123}= t_{213}$. Away from it, the variables \eqref{eqncoordinatesFn}
define local coordinates.

\begin{Proposition} 
\label{Prop:volFn} For
\[
T = (t_1, t_2, t_{3}, t_{12},  t_{13}, t_{23})\colon \R^*(F_3,\SL2)\to \CC^{6}
\]
the restriction of the volume form 
to the open subset
$\R^*(F_3,\SL2)\setminus\{ t_{123}= t_{213}\}$ is the pull-back form $ \Omega^{\SL2}_{F_3}=\pm T^*\Omega$, where
 \begin{equation}
 \label{eqn:OmegaF3}
 \Omega = \pm \frac{4}{t_{123}-t_{213} }  d\,t_1 \wedge d\,t_2 \wedge d\,t_3 \wedge d\,t_{12}  
 \wedge  d\,t_{13} \wedge d\,t_{23} \, .
 \end{equation}
\end{Proposition}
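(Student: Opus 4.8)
The plan is to specialize Theorem~\ref{Thm:vol} to the four-holed sphere $S_{0,4}$, whose fundamental group is $F_3=\langle\gamma_1,\gamma_2,\gamma_3\rangle$ and whose boundary is represented by $\gamma_1,\gamma_2,\gamma_3$ and $\gamma_1\gamma_2\gamma_3$ (Figure~\ref{Fig:loops}). Here $\dG=3$, $\rG=1$, $\chi(S_{0,4})=-2$, so the relative character variety has dimension $-\dG\chi-\bS\rG=2$ and hence $n=1$. Theorem~\ref{Thm:vol} then reads
$$\Omega_{F_3}=\pm\,\omega\wedge\nu_1\wedge\nu_2\wedge\nu_3\wedge\nu_4,$$
where $\nu_i$ is the peripheral form of the $i$-th boundary component and $\omega$ is the symplectic form of the relative variety.

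Next I would evaluate each factor. By Corollary~\ref{cor:dtrace} each peripheral form equals $\pm\sqrt2$ times the differential of the corresponding boundary trace; since the trace is inversion-invariant, the boundary $(\gamma_1\gamma_2\gamma_3)^{-1}$ contributes $\pm\sqrt2\,d\,t_{123}$, so that $\nu_1\wedge\nu_2\wedge\nu_3\wedge\nu_4=\pm4\,d\,t_1\wedge d\,t_2\wedge d\,t_3\wedge d\,t_{123}$. For the symplectic factor I would apply Proposition~\ref{Prop:symplectic40} with $\lambda=\gamma_1\gamma_2$ and $\mu=\gamma_2\gamma_3$, giving $\omega=\pm(t_{1223}-t_{1232})^{-1}\,d\,t_{12}\wedge d\,t_{23}$. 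Wedging and reordering yields
$$\Omega_{F_3}=\pm\frac{4}{t_{1223}-t_{1232}}\;d\,t_1\wedge d\,t_2\wedge d\,t_3\wedge d\,t_{12}\wedge d\,t_{23}\wedge d\,t_{123}.$$

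It then remains to rewrite this in the coordinates $T=(t_1,t_2,t_3,t_{12},t_{13},t_{23})$ of \cite{AcunaMontesinos}, i.e.\ to trade $d\,t_{123}$ for $d\,t_{13}$. Since the two systems share $t_1,t_2,t_3,t_{12},t_{23}$, wedging against these kills every term of $d\,t_{123}$ except its $d\,t_{13}$ component, so
$$d\,t_1\wedge d\,t_2\wedge d\,t_3\wedge d\,t_{12}\wedge d\,t_{23}\wedge d\,t_{123}=\frac{\partial t_{123}}{\partial t_{13}}\;d\,t_1\wedge d\,t_2\wedge d\,t_3\wedge d\,t_{12}\wedge d\,t_{23}\wedge d\,t_{13},$$
the partial derivative being taken with $t_1,t_2,t_3,t_{12},t_{23}$ held fixed. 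I would compute it by implicit differentiation of $t_{123}^2-R\,t_{123}+S=0$, with $R,S$ as in \eqref{eqn:P}--\eqref{eqn:Q} and $2t_{123}-R=t_{123}-t_{213}$, obtaining $\partial t_{123}/\partial t_{13}=(t_2t_{123}-2t_{13}-t_{12}t_{23}+t_1t_3)/(t_{123}-t_{213})$.

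The crux is to identify this numerator with $t_{1223}-t_{1232}$. Using Cayley--Hamilton $\gamma_2^2=t_2\gamma_2-\operatorname{Id}$ one gets $t_{1223}=t_2t_{123}-t_{13}$, while the fundamental relation \eqref{eqn:trace} gives $t_1t_3=t_{13}+t_{1\bar3}$ and $t_{12}t_{23}=t_{1223}+t_{12\bar3\bar2}$; substituting these collapses the numerator to $t_{1\bar3}-t_{12\bar3\bar2}$, which equals $t_{1223}-t_{1232}$ by equation~\eqref{eqn:t1223-t1232}. Hence $\partial t_{123}/\partial t_{13}=(t_{1223}-t_{1232})/(t_{123}-t_{213})$, the factor $t_{1223}-t_{1232}$ cancels against the symplectic denominator, and after one more transposition of $d\,t_{13}$ and $d\,t_{23}$ we reach $\Omega=\pm\frac{4}{t_{123}-t_{213}}\,d\,t_1\wedge d\,t_2\wedge d\,t_3\wedge d\,t_{12}\wedge d\,t_{13}\wedge d\,t_{23}$, as claimed. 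The main obstacle is exactly this trace-identity bookkeeping: every simplification must be carried out with the $\SL2$ relations and cyclic invariance, and the clean final answer depends on the cancellation of $t_{1223}-t_{1232}$ between the symplectic denominator and the change-of-coordinates Jacobian.
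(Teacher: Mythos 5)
Your proposal is correct and follows essentially the same route as the paper: specialize Theorem~\ref{Thm:vol} to $S_{0,4}$, evaluate the peripheral forms via Corollary~\ref{cor:dtrace} and the symplectic form via Proposition~\ref{Prop:symplectic40}, and then trade $d\,t_{123}$ for $d\,t_{13}$ by implicit differentiation of $t_{123}^2-R\,t_{123}+S=0$, identifying the resulting coefficient with $t_{1223}-t_{1232}$ through the same trace identities the paper uses in \eqref{eqn:dp3t-q}--\eqref{eqn:dp3t-qshort}. The only cosmetic difference is that you phrase the coordinate change as a Jacobian $\partial t_{123}/\partial t_{13}$ rather than the paper's differential identity \eqref{eqn:difsts}; the computation is identical.
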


\begin{proof}
Consider the surface $S=S_{0,4}$.
 Since $\gamma_1$, $\gamma_2$, $\gamma_3$, and $\gamma_1\gamma_2\gamma_3$ are the peripheral elements, using Proposition~\ref{Prop:symplectic40}
 and Corollary~\ref{cor:dtrace}, 
  \begin{equation}
 \label{eqn:OmegaF3prel}
  \Omega_{F_3}^{\SLL_2}=  \Omega_{S_{0,4}}^{\SLL_2} = \pm    4 \frac{ d\,t_{12}  \wedge d\,t_{23}} { t_{1223}-t_{1232} }  \wedge   d\,t_1 \wedge d\,t_2 \wedge d\,t_3 \wedge d\,t_{123}.
 \end{equation}
It remains to replace $d\,t_{123}$ by $d\,t_{13}$ in this formula.
Differentiating the equality
$$
t_{123}^2-R\, t_{123}+S =0,
$$
where $R$ and $S $ are given in \eqref{eqn:P} and \eqref{eqn:Q}, we deduce:
 \begin{equation}
 \label{eqn:dt123}
\left(2 t_{123}- R \right ) d\, t_{123} =
\sum_{\eta\in \{1,2,3,12,13,23\} }\left( \frac{\partial R}{\partial t_\eta}t_{123} - \frac{\partial S }{\partial t_\eta}\right) d\, t_\eta\, .
 \end{equation}
Since $R= t_{123}+t_{213}$,
  \begin{equation}
 \label{eqn:2t123}
2 t_{123}- R= t_{123}-t_{213}
  \end{equation}
In addition, using $t_1 t_3= t_{13}+ t_{1\bar 3}$,
\begin{equation}
 \label{eqn:dp3t-q}
\frac{\partial R}{\partial t_{13}}t_{123} - \frac{\partial S }{\partial t_{13}}= 
t_2t_{123}-(2 t_{13} + t_{12} t_{23}-t_1 t_3)=  t_2t_{123}-  t_{12} t_{23}-t_{13} +t_{1\bar 3} 
\end{equation}
Using the standard relations on traces, we have:
\begin{align}
 t_{13} & = t_{12\bar23}= t_{312\bar 2}= t_{312}t_{\bar 2}-t_{3122}= t_{123} t_2-t_{1223} \\
 t_{1\bar 3} & = t_{12\bar2\bar 3}=  t_{12}t_{3 2}-t_{1232}= t_{12}t_{ 23 }-t_{1232}
\end{align}
From those equations:
\begin{equation}
 \label{eqn:dp3t-qshort}
\frac{\partial R}{\partial t_{13}}t_{123} - \frac{\partial S }{\partial t_{13}}= 
t_{1223}-t_{1232}\, .
\end{equation}
Hence, using \eqref{eqn:2t123} and \eqref{eqn:dp3t-qshort},  equality \eqref{eqn:dt123} becomes:
\begin{equation}
\label{eqn:difsts}
( t_{123}-t_{213} ) d\, t_{123} = (t_{1223}-t_{1232}) d\, t_{13} + \sum_{\eta\in \{1,2,3,12,23\} } p_\eta\, d\, t_\eta\
\end{equation}
for some polynomials $p_\eta$.
Using \eqref{eqn:difsts} to replace $d\, t_{123}$ by $d\, t_{13}$ in \eqref{eqn:OmegaF3prel}, we prove \eqref{eqn:OmegaF3}. 
\end{proof}

\begin{proof}[Proof of Theorem~\ref{thm:F3SL2}]
Write $F_k=\langle\gamma_1,\gamma_2,\ldots,\gamma_k\rangle$ and
consider the graph $\mathcal G$ with one vertex and $k$ edges, so that $\pi_1(\mathcal G)\cong F_k$. Consider subgraphs $\mathcal G'$ and $\mathcal G''$, so that
$\pi_1(\mathcal G')=\langle\gamma_1,\gamma_2,\ldots,\gamma_{k-1}\rangle$ and   $\pi_1(\mathcal G'')=\langle\gamma_1,\gamma_2,\gamma_{k}\rangle$;
therefore $\mathcal G =\mathcal G'\cup \mathcal G''$ and $\pi_1( \mathcal G'\cap  \mathcal G'')=\langle\gamma_1,\gamma_2\rangle$.
Since we assume  $t_{12\bar 1\bar 2}\neq 2$, $\rho(\pi_1  ( \mathcal G'\cap  \mathcal G''))$ is irreducible, therefore, the long exact sequence of
Mayer-Vietoris applied to $(\mathcal G', \mathcal G'') $ is:
\begin{equation}
0 \to H^1(\mathcal G,\operatorname{Ad}\rho)\to  H^1(\mathcal G',\operatorname{Ad}\rho)\oplus  H^1(\mathcal G'',\operatorname{Ad}\rho)\to  
 H^1(\mathcal G'\cap \mathcal G'',\operatorname{Ad}\rho)\to 0 .
\end{equation}
Interpreting cohomology groups as tangent spaces to spaces of representations, the assertion on the local parameterization is straightforward
from the sequence.
By an induction argument, the formula for the volume form is a consequence of the product of torsions, Corollary~\ref{cor:C3} and
Proposition~\ref{Prop:volFn}.
\end{proof}

\subsection{Volume form on $\R^*(F_k,\SLL_3(\CC)) $}

Before proving Proposition~\ref{prop:FkSL3} and Theorem~\ref{thm:FkSL3}, we need two lemmas on regular elements in $\SLL_3(\CC)$. 
Recall that an element of $\SLL_3(\CC)$ is regular if its minimal polynomial and its characteristic polynomial have the same degree. 
This is the case if and only if each eigenspace is one-dimensional.

\begin{Lemma}
\label{Lemma:regtr}
Let $A,B\in \SLL_3(\CC)$. If $\operatorname{tr}(ABA^{-1}B^{-1})\neq \operatorname{tr}(BAB^{-1}A^{-1})$
then:
\begin{enumerate}[(i)]
 \item both $A$ and $B$ are regular and
 \item the subgroup 
$\langle A,B \rangle \subset \SLL_3(\CC)$ is irreducible.
\end{enumerate}

%
\end{Lemma}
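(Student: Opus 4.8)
The plan is to reduce both assertions to a single spectral criterion for the commutator. Write $C = ABA^{-1}B^{-1}$, so that $BAB^{-1}A^{-1} = C^{-1}$ and the hypothesis reads $\operatorname{tr}(C) \neq \operatorname{tr}(C^{-1})$. For any $M \in \SLL_3(\CC)$ the characteristic polynomial is $x^3 - \operatorname{tr}(M)\, x^2 + \operatorname{tr}(M^{-1})\, x - 1$, because $\det M = 1$ and the coefficient of $x$ equals $\operatorname{tr}(\wedge^2 M) = \operatorname{tr}(M^{-1})$. Evaluating at $x = 1$ gives $\operatorname{tr}(M^{-1}) - \operatorname{tr}(M)$, so $\operatorname{tr}(M) = \operatorname{tr}(M^{-1})$ if and only if $1$ is an eigenvalue of $M$, i.e.\ $\det(M - \operatorname{Id}) = 0$. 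Applied to $M = C$, the hypothesis $\operatorname{tr}(ABA^{-1}B^{-1}) \neq \operatorname{tr}(BAB^{-1}A^{-1})$ is therefore \emph{equivalent} to the statement that $1$ is not an eigenvalue of $C$, equivalently that $C$ has no nonzero fixed vector. I will prove the contrapositive of each assertion: if $A$ or $B$ fails to be regular, or if $\langle A, B\rangle$ is reducible, then $C$ fixes a nonzero vector.

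For (i), I would argue by contrapositive. Suppose $A$ is not regular. By the characterization recalled just before the lemma, some eigenvalue $\lambda$ has a fixed space $V_\lambda = \ker(A - \lambda\operatorname{Id})$ of dimension at least $2$. Then $B V_\lambda$ also has dimension at least $2$, so inside $\CC^3$ we get $\dim(V_\lambda \cap B V_\lambda) \geq 2 + 2 - 3 = 1$; pick $0 \neq w \in V_\lambda \cap B V_\lambda$ and write $w = B u$ with $u \in V_\lambda$. Then $A^{-1}B^{-1}w = A^{-1}u = \lambda^{-1}u$, hence $BA^{-1}B^{-1}w = \lambda^{-1} w$, and finally $C w = A(\lambda^{-1}w) = \lambda^{-1} A w = w$, using $Aw = \lambda w$ since $w \in V_\lambda$. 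Thus $1$ is an eigenvalue of $C$, contradicting the hypothesis, so $A$ is regular. As the hypothesis is symmetric under exchanging $A$ and $B$ (which replaces $C$ by $C^{-1}$ and fixes the eigenvalue $1$), the same argument shows $B$ is regular.

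For (ii), again by contrapositive, suppose $\langle A, B\rangle$ is reducible, so $A$ and $B$ share a common invariant subspace $W$ with $\dim W \in \{1, 2\}$. If $\dim W = 1$, say $W = \langle v\rangle$ with $Av = \alpha v$ and $Bv = \beta v$, then $C v = \alpha\beta\alpha^{-1}\beta^{-1} v = v$. If $\dim W = 2$, then $A$ and $B$ induce invertible scalar maps on the line $\CC^3/W$, so $C$ acts as the identity on $\CC^3/W$; hence, in a basis adapted to $W$, the matrix of $C$ is block upper triangular with a lower right entry equal to $1$, its characteristic polynomial is divisible by $(x-1)$, and $1$ is an eigenvalue of $C$. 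In either case $1$ is an eigenvalue of $C$, contradicting the hypothesis, so $\langle A, B\rangle$ is irreducible.

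The genuinely structural input is the spectral criterion of the first paragraph, which turns a trace inequality into the non-existence of a fixed vector of the commutator; after that everything is elementary linear algebra. The only point requiring mild care is matching "non-regular" with "some eigenspace has dimension $\geq 2$", which is exactly what makes the dimension count $V_\lambda \cap B V_\lambda \neq 0$ succeed, but this is precisely the normal-form description of regularity already stated in the excerpt, so I anticipate no real obstacle.
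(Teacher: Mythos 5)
Your proof is correct and follows essentially the same route as the paper's: both arguments show that a failure of regularity or of irreducibility forces the commutator $C=ABA^{-1}B^{-1}$ to have $1$ as an eigenvalue, which in $\SLL_3(\CC)$ makes its spectrum inversion-invariant and hence $\operatorname{tr}(C)=\operatorname{tr}(C^{-1})$. Your only addition is packaging this as an explicit equivalence via $\det(\operatorname{Id}-C)=\operatorname{tr}(C^{-1})-\operatorname{tr}(C)$, a clean but inessential refinement of the paper's argument.
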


\begin{proof}
(i) Assume that $A$ is not regular. Then it  has an eigenvalue $\lambda\in\CC^*$ 
with an eigenspace $E_\lambda=\ker(A-\lambda\operatorname{Id})$ of dimension  $\dim E_\lambda\geq 2$. Therefore
$\dim  (E_\lambda\cap B(E_\lambda))\geq 1$. Chose a 
nonzero vector $v\in E_\lambda\cap B(E_\lambda)$, by construction $B^{-1}(v)\in E_\lambda$ and
$(ABA^{-1}B^{-1})(v)= v$. This yields that $1$ is an eigenvalue of the commutator  $ABA^{-1}B^{-1}$, therefore it has the same eigenvalues as its inverse,
which implies that $\operatorname{tr}(ABA^{-1}B^{-1})=\operatorname{tr}(BAB^{-1}A^{-1})$.

(ii) By contradiction, assume that $L\subset\CC^3$ is a proper subspace invariant by both $A$ and $B$. If $\dim L=1$, then this is and eigenspace of
$ABA^{-1}B^{-1}$ with eigenvalue $1$, and if $\dim L=2$, by looking at the action on $\CC^3/L$ we also deduce that $1$ is an
eigenvalue of $ABA^{-1}B^{-1}$. Therefore, by the discussion on the previous item, this contradicts the hypothesis  
$\operatorname{tr}(ABA^{-1}B^{-1})\neq \operatorname{tr}(BAB^{-1}A^{-1})$.
\end{proof}

\begin{Lemma}
\label{Lemma:reginv}
 Let $A\in \SLL_3(\CC)$. If $A$ is regular then the $\Ad_A$-invariant subspace of $\sll_3(\CC)$ is
 $$\sll_3(\CC)^{\Ad_A}=\langle A-\tfrac{\operatorname{tr}(A)}3\operatorname{Id},
 A^{-1}-\tfrac{\operatorname{tr}(A^{-1})}3\operatorname{Id}\rangle .$$  
\end{Lemma}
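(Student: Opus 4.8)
The plan is to reinterpret the $\Ad_A$-invariant subspace as a centralizer and then use the hypothesis of regularity to pin down its dimension. Since $\Ad_A(X) = AXA^{-1}$, an element $X\in\sll_3(\CC)$ is fixed by $\Ad_A$ exactly when $AX = XA$, so $\sll_3(\CC)^{\Ad_A}$ is the space of traceless matrices commuting with $A$. The first step is to recall the standard linear-algebra fact that the centralizer of $A$ in the full matrix algebra $\mathfrak{gl}_3(\CC)$ coincides with the polynomial algebra $\CC[A]$ whenever $A$ is regular (equivalently non-derogatory, i.e.\ its minimal polynomial has degree $3$): choosing a cyclic vector $v$ so that $v, Av, A^2v$ is a basis, any $X$ commuting with $A$ is determined by $Xv = p(A)v$ for some polynomial $p$ and hence equals $p(A)$. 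Thus $\CC[A] = \langle \operatorname{Id}, A, A^2\rangle$ has dimension $3$, and $\sll_3(\CC)^{\Ad_A} = \CC[A]\cap\sll_3(\CC)$ has dimension $2$ (the trace functional is nonzero on $\CC[A]$ as $\operatorname{tr}\operatorname{Id}=3$), consistent with $\dim H^0(S^1;\Ad\rho)=r=2$.

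Next I would check that the two proposed generators actually lie in this $2$-dimensional space. Both $A - \tfrac{\operatorname{tr}(A)}3\operatorname{Id}$ and $A^{-1} - \tfrac{\operatorname{tr}(A^{-1})}3\operatorname{Id}$ are visibly traceless. That they lie in $\CC[A]$ is clear for the first; for the second it follows from the Cayley--Hamilton theorem, which for $A\in\SLL_3(\CC)$ reads $A^3 - \sigma_1(A)A^2 + \sigma_2(A)A - \operatorname{Id}=0$, so that $A^{-1} = A^2 - \sigma_1(A)A + \sigma_2(A)\operatorname{Id}\in\CC[A]$.

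It then remains to verify linear independence. Suppose $a\bigl(A - \tfrac{\operatorname{tr}(A)}3\operatorname{Id}\bigr) + b\bigl(A^{-1} - \tfrac{\operatorname{tr}(A^{-1})}3\operatorname{Id}\bigr)=0$. Substituting the Cayley--Hamilton expression for $A^{-1}$ turns the left-hand side into a polynomial in $A$ of degree at most $2$ whose coefficient of $A^2$ is $b$; since $A$ is regular the elements $\operatorname{Id}, A, A^2$ are linearly independent, forcing $b=0$, and then $a=0$ because $A$ is not a scalar matrix (a scalar matrix is not regular). Hence the two generators are independent, and by the dimension count they form a basis of $\sll_3(\CC)^{\Ad_A}$, proving the lemma. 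The only genuinely non-routine ingredient is the characterization of the centralizer of a regular element as $\CC[A]$; everything else is a direct verification.
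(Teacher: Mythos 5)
Your proof is correct. It follows the same overall strategy as the paper's (reduce to a dimension count plus linear independence of the two given elements), but you obtain the dimension differently: the paper simply invokes regularity to assert $\dim\sll_3(\CC)^{\Ad_A}=2$ (the rank of $\SLL_3(\CC)$, as recalled in Definition~\ref{Definition:regular} and the discussion following it), whereas you derive it from the classical fact that the centralizer of a non-derogatory matrix in $\mathfrak{gl}_3(\CC)$ is $\CC[A]=\langle \operatorname{Id},A,A^2\rangle$, and then intersect with the traceless matrices. Your route buys a slightly stronger statement, namely the identification $\sll_3(\CC)^{\Ad_A}=\CC[A]\cap\sll_3(\CC)$, at the cost of the cyclic-vector argument. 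The independence step is essentially identical in both: the paper observes that a nontrivial relation among $A$, $\operatorname{Id}$, $A^{-1}$ yields, after multiplying by $A$, a relation among $A^2$, $A$, $\operatorname{Id}$, contradicting regularity; you reach the same contradiction by substituting the Cayley--Hamilton expression for $A^{-1}$ and reading off the coefficient of $A^2$. Both are sound.
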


\begin{proof}
It is clear from construction that both $A-\frac{\operatorname{tr}(A)}3\operatorname{Id}$ and
 $A^{-1}-\frac{\operatorname{tr}(A^{-1})}3\operatorname{Id}$ are $\Ad_A$-invariant. 
 All we need to show is that those elements are linearly independent, as  by regularity $\dim \sll_3(\CC)^{\Ad_A}=2$.
 If  $ A-\frac{\operatorname{tr}(A)}3\operatorname{Id}$ and $ A^{-1}-\frac{\operatorname{tr}(A^{-1})}3\operatorname{Id}$ were  linearly dependent,
then   $A$, $\operatorname{Id}$, and $A^{-1}$ would satisfy a nontrivial linear relation.
Multiplying it by $A$, 
the same relation would be satisfied by $A^2$, $A$ and $\operatorname{Id}$,
and hence $A$ would have an eigenspace of dimension at least $2$,
 contradicting regularity. 
\end{proof}

%
%
%

\begin{Remark} \label{Remark:good}
It follows from Schur's Lemma \cite{FultonHarris} that every irreducible representation
$\rho\colon\Gamma\to \SLL_N(\CC)$ is good, that is the centralizer of $\rho(\Gamma)$ coincides with the center of $\SLL_N(\CC)$.
\end{Remark}

\begin{proof}[Proof of Proposition~\ref{prop:FkSL3}]
Assume $k=3$, the general case follows from an induction argument as in the proof of Theorem~\ref{thm:F3SL2}.

We chose generators $F_3 =\langle \gamma_1,\gamma_2,\gamma_3 |-\rangle$ and we identify $F_3$ with
$\pi_1(S_{0,4})$. We represent $S_{0,4}$  as the union of two pairs of pants $P'$ and $P''$, so that $P'\cap P''$ is a circle.
Chose the generators of the fundamental group so that 
$\pi_1 (P')=\langle \gamma_1,\gamma_2 \rangle$, $\pi_1 (P'')=\langle \gamma_1,\gamma_3 \rangle$,
and $\gamma_1$ is the generator of $\pi_1(   P'\cap P'' ) $. Then the peripheral elements of $P'$ are
$\gamma_1$, $\gamma_2$, and $\gamma_1\gamma_2$, and those of $P''$,  $\gamma_1$, $\gamma_3$, and $\gamma_1\gamma_3$. The peripheral elements of $S$ are $\gamma_2$, $\gamma_3$, $\gamma_1\gamma_2$, and $\gamma_1\gamma_3$.

Let $[\rho]\in \R^*(F_3,\SLL_3(\CC))\setminus    \{ \uptau_{12\bar 1\bar{2}}= \uptau_{21\bar 2\bar 1} \}\cup
\{ \uptau_{13\bar 1\bar{3}}= \uptau_{31\bar 3\bar 1} \} \cup\{
  \varDelta^1_{23} = 0\} $ be a representation where
\begin{equation}
\label{eqn:vardelta}
\varDelta^1_{23} = (\uptau_{123}-\uptau_{132}) (\uptau_{\bar 1\bar 2\bar 3}-\uptau_{\bar 1\bar 3\bar 2}) -
  (\uptau_{1\bar 2\bar 3}-\uptau_{1\bar 3\bar 2}) (\uptau_{\bar 1 2  3}-\uptau_{\bar 1 3 2 }). 
\end{equation}
We have to show that the 16 functions
\[
 (\uptau_1, \uptau_{\bar 1},\uptau_2, \uptau_{\bar 2},\uptau_3, \uptau_{\bar 3}, \uptau_{12}, \uptau_{\bar 1\bar 2}, \uptau_{13}, \uptau_{\bar 1\bar 3},
     \uptau_{23}, \uptau_{\bar 2\bar 3},
  \uptau_{1\bar 2}, \uptau_{\bar 1 2},  \uptau_{1\bar 3}, \uptau_{\bar 1 3}) 
\]
define a local parameterization at $[\rho]$.
The hypothesis
$\operatorname{tr}(\rho([\gamma_1,\gamma_i])])\neq \operatorname{tr}(\rho([\gamma_i,\gamma_1]))$ for $i=2,3$  implies that $\rho(\gamma_j)$, $j=1,2,3$, are regular elements
(Lemma~\ref{Lemma:regtr}). It follows also that $\rho(\gamma_{1}\gamma_2)$
and $\rho(\gamma_{1}\gamma_3)$ are regular since
$\operatorname{tr}(\rho([\gamma_1\gamma_i,\gamma_1])) = \operatorname{tr}(\rho([\gamma_i,\gamma_1]))$
and
$\operatorname{tr}(\rho([\gamma_1,\gamma_1\gamma_i])) = \operatorname{tr}(\rho([\gamma_1,\gamma_i]))$ for $i=2,3$.
The Mayer-Vietoris long exact sequence is:
\begin{multline}
\label{eqn:lesp'p''}
0\to H^0(\gamma_1,\Ad\rho)
\xrightarrow{\beta} H^1(S,\Ad\rho)
\xrightarrow{j} H^1(P',\Ad\rho)\oplus H^1(P'',\Ad\rho)\\
\xrightarrow{\Delta} H^1(\gamma_1,\Ad\rho)\to 0\,.
\end{multline}
Chose  $\mathbf{u}$  a basis  for $H^0(\gamma_1,\Ad\rho) $.
We will proceed as in the proof of Proposition~\ref{Prop:symplectic40}.
Viewing the cohomology groups as tangent spaces, the proposition  will follow from the local parameterizations for the representation space of $P'$ and $P''$, and  
from \eqref{eqn:lesp'p''}, provided we show that 
\begin{equation*}
d\, \uptau_{23}\wedge d\, \uptau_ {\bar 2\bar 3} (\wedge \beta(\mathbf{u}))\neq 0 .
\end{equation*}
We prove below in Lemma~\ref{lemma:nuDelta} that  
$ d\, \uptau_{23}\wedge d\, \uptau_ {\bar 2\bar 3} (\wedge \beta(\mathbf{u}))=\pm \varDelta^1_{23}$, which is nonzero by hypothesis.
\end{proof}

\begin{Lemma}\label{lemma:nuDelta}
$  d\, \uptau_{23}\wedge d\, \uptau_ {\bar 2\bar 3} (\beta(\wedge\mathbf{u}))=\pm \varDelta^1_{23}$, where $\varDelta^1_{23}$ is as in \eqref{eqn:vardelta}.
 \end{Lemma}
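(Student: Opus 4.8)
The plan is to realize the connecting map $\beta$ explicitly as an infinitesimal bending along the separating curve $\gamma_1$, exactly as in the proof of Proposition~\ref{Prop:symplectic40}, and then to differentiate the two trace functions $\uptau_{23}$ and $\uptau_{\bar 2\bar 3}$ along it. First I would fix the basis of $H^0(\gamma_1;\Ad\rho)\cong\sll_3(\CC)^{\Ad_{A_1}}$ supplied by Lemma~\ref{Lemma:reginv}, writing $A_i=\rho(\gamma_i)$ and taking $\mathbf{u}=\{a_1,a_2\}$ with $a_1=A_1-\tfrac{\operatorname{tr}(A_1)}{3}\operatorname{Id}$ and $a_2=A_1^{-1}-\tfrac{\operatorname{tr}(A_1^{-1})}{3}\operatorname{Id}$. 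Since $\gamma_1$ separates $S$ into $P'$ and $P''$, the class $\beta(a)$ for $a\in\sll_3(\CC)^{\Ad_{A_1}}$ is represented by the derivative at $\varepsilon=0$ of the bending family of representations that fixes $\pi_1(P')=\langle\gamma_1,\gamma_2\rangle$ and conjugates $\pi_1(P'')=\langle\gamma_1,\gamma_3\rangle$, namely $\gamma_1\mapsto A_1$, $\gamma_2\mapsto A_2$, and $\gamma_3\mapsto\exp(\varepsilon a)\,A_3\,\exp(-\varepsilon a)$ (see \cite[Lemma~5.1]{JohnsonMillson}, as invoked in Proposition~\ref{Prop:symplectic40}).

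Next I would differentiate. Because $\gamma_2\gamma_3\mapsto A_2\exp(\varepsilon a)A_3\exp(-\varepsilon a)$, the cyclic invariance of the trace gives $d\,\uptau_{23}(\beta(a))=\operatorname{tr}\bigl(a\,(A_3A_2-A_2A_3)\bigr)$, and likewise $d\,\uptau_{\bar 2\bar 3}(\beta(a))=\operatorname{tr}\bigl(a\,(A_3^{-1}A_2^{-1}-A_2^{-1}A_3^{-1})\bigr)$. The key simplification is that the $\operatorname{Id}$-terms in $a_1,a_2$ contribute nothing, since the trace of a commutator vanishes, so I may replace $a_1$ by $A_1$ and $a_2$ by $A_1^{-1}$ throughout. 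This yields
\[
d\,\uptau_{23}(\beta(a_1))=\uptau_{132}-\uptau_{123},\qquad
d\,\uptau_{23}(\beta(a_2))=\uptau_{\bar 1 3 2}-\uptau_{\bar 1 2 3},
\]
\[
d\,\uptau_{\bar 2\bar 3}(\beta(a_1))=\uptau_{1\bar 3\bar 2}-\uptau_{1\bar 2\bar 3},\qquad
d\,\uptau_{\bar 2\bar 3}(\beta(a_2))=\uptau_{\bar 1\bar 3\bar 2}-\uptau_{\bar 1\bar 2\bar 3}.
\]

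Finally I would assemble the $2\times 2$ determinant $d\,\uptau_{23}\wedge d\,\uptau_{\bar 2\bar 3}(\beta(a_1)\wedge\beta(a_2))$ from these four entries and reconcile it with $\varDelta^1_{23}$ of \eqref{eqn:vardelta}: pulling a factor $(-1)$ out of each difference (two per product) leaves the signs unchanged, matching the determinant with $(\uptau_{123}-\uptau_{132})(\uptau_{\bar 1\bar 2\bar 3}-\uptau_{\bar 1\bar 3\bar 2})-(\uptau_{1\bar 2\bar 3}-\uptau_{1\bar 3\bar 2})(\uptau_{\bar 1 2 3}-\uptau_{\bar 1 3 2})$ up to an overall sign. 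The only delicate point is the bookkeeping of signs, coming from the orientation/ordering of the bending direction and of the basis $\mathbf{u}$; since the statement is asserted only up to sign this is harmless, so I expect no genuine obstacle beyond verifying the identification of $\beta$ with the bending cocycle, which I would justify by reference to the separating-curve argument already used for $\SL2$.
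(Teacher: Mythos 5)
Your proposal is correct and follows essentially the same route as the paper's proof: the same basis $\{A_1-\tfrac{\operatorname{tr}A_1}{3}\operatorname{Id},\,A_1^{-1}-\tfrac{\operatorname{tr}A_1^{-1}}{3}\operatorname{Id}\}$ from Lemma~\ref{Lemma:reginv}, the same realization of $\beta$ as an infinitesimal bending along the separating curve $\gamma_1$, and the same four trace derivatives assembled into the $2\times 2$ determinant giving $\pm\varDelta^1_{23}$. The only cosmetic difference is that you invoke the vanishing of the trace of a commutator to drop the $\operatorname{Id}$-terms up front, where the paper substitutes directly; the resulting entries are identical.
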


\begin{proof}
Set $A_1 = \rho(\gamma_1)$. By Lemma~\ref{Lemma:reginv}, the elements
\[
x=A_1-\frac{\operatorname{tr}A_1 }{3}\operatorname{Id}\quad \textrm{ and }\quad  
y=A_1^{-1}-\frac{\operatorname{tr}A_1^{-1}}{3}\operatorname{Id}
\]
form a basis of the invariant subspace $\sll_3(\CC)^{\operatorname{Ad}\rho(\gamma_1)}$.
We choose $\mathbf{u}=\{x,y\}$
via the isomorphism
$H^0(\gamma_1,\operatorname{Ad}\rho)\cong \sll_3(\CC)^{\operatorname{Ad}_{A_1}}$.

Then $\beta(x)$ is the tangent vector to the infinitesimal bending:
$$
\gamma_1\mapsto A, \quad \gamma_2\mapsto B, \quad \gamma_3\mapsto ( \operatorname{Id}+\varepsilon x )C(\operatorname{Id}-\varepsilon x )= 
C+\varepsilon(xC-Cx)
\qquad \textrm{ in }\CC[\varepsilon]/\varepsilon^2
$$
and similarly for $\beta(y)$. To compute $d\,\uptau_{23}$ and $d\,\uptau_{\bar 2\bar 3}$ on  $\beta(x)$
and $\beta(y)$, we must evaluate the infinitesimal deformations
on $\gamma_2\gamma_3$ and $\bar\gamma_2\bar\gamma_3$. Thus the path corresponding to  $\beta(x)$ evaluated at $\gamma_2\gamma_3$ is 
\begin{equation}
 \label{eqn:g2g3}
 \gamma_2\gamma_3\mapsto BC+\varepsilon(BxC-BCx)= BC+\varepsilon(BAC-BCA) .
\end{equation}
Therefore, taking traces we get:
\begin{equation}
 \label{eqn:dt23x}
 d\,\uptau_{23}(\beta(x))=\uptau_{213}-\uptau_{231}=\uptau_{132}-\uptau_{123} .
\end{equation}
The same argument for $y$ instead of $x$ gives:
\begin{equation}
 \label{eqn:dt23y}
d\,\uptau_{23}(\beta(y))=\uptau_{2\bar 13}-\uptau_{23\bar 1}=\uptau_{\bar 132}-\uptau_{\bar 123} .
\end{equation}
To evaluate $d\,\uptau_{\bar 2\bar 3}=d\,\uptau_{\bar 3\bar 2}$, we take inverses in \eqref{eqn:g2g3}
\begin{equation}
 \label{eqn:barg3g3}
 (\gamma_2\gamma_3)^{-1}\mapsto C^{-1}B^{-1}+\varepsilon(xC^{-1}B^{-1}-C^{-1}xB^{-1})= C^{-1}B^{-1}+\varepsilon(AC^{-1}B^{-1}-C^{-1}AB^{-1})
\end{equation}
and taking traces we get:
\begin{equation}
 \label{eqn:dtbar23x}
d\,\uptau_{\bar 2\bar 3}(\beta(x))=\uptau_{1\bar 3\bar 2}-\uptau_{\bar 31\bar 2}=\uptau_{1\bar 3\bar 2}-\uptau_{1\bar 2\bar 3} .
\end{equation}
Again the same argument for $y$ instead of $x$ gives:
\begin{equation}
 \label{eqn:dtbar23y}
 d\,\uptau_{\bar 2\bar 3}(\beta(y))=\uptau_{\bar 1\bar 3\bar 2}-\uptau_{\bar 3\bar 1\bar 2}=\uptau_{\bar 1\bar 3\bar 2}-\uptau_{\bar 1\bar 2\bar 3} .
\end{equation}
Hence 
\begin{align*}
d\,\uptau_{23}\wedge d\,\uptau_{\bar 2\bar 3}(\beta(x)\wedge\beta(y) )
&=\pm  
(\uptau_{123}-\uptau_{132}) (\uptau_{\bar 1\bar 2\bar 3}-\uptau_{\bar 1\bar 3\bar 2}) - 
(\uptau_{1\bar 2\bar 3}-\uptau_{1\bar 3\bar 2}) (\uptau_{\bar 1 2  3}-\uptau_{\bar 1 3 2 }) \\
&=\pm
\varDelta^1_{23}\,,
\end{align*}
which concludes the proof of the lemma.
\end{proof} 


\begin{proof}[Proof of Theorem~\ref{thm:FkSL3}]
We assume again that $k=3$. The general case follows with the same argument as in  Theorem~\ref{thm:F3SL2}.

As in the proof of Proposition~\ref{prop:FkSL3} we decompose $S=S_{0,4} = P'\cup P''$,
$\gamma_1 = P'\cap P''$. Also, we choose generators of $\pi_1(P')$, $\pi_1(P'')$, and
$\pi_1(P'\cap P'')$ as in the proof of Proposition~\ref{prop:FkSL3}. The peripheral elements of $S$ are
$\gamma_2$, $\gamma_3$, $\gamma_1\gamma_2$, and $\gamma_1\gamma_3$.

For a representation
$\rho\colon\pi_1(S)\to\SLL_3(\CC)$ 
 we let
$\rho'\colon\pi_1(P')\to\SLL_3(\CC)$ and $\rho''\colon\pi_1(P'')\to\SLL_3(\CC)$ denote the restriction of 
$\rho$ to $\pi_1(P')$ and $\pi_1(P'')$ respectively.

Let $[\rho]\in\R(S,\SLL_3(\CC))\setminus   \{ \uptau_{12\bar 1\bar{2}}= \uptau_{21\bar 2\bar 1} \}\cup
\{ \uptau_{13\bar 1\bar{3}}= \uptau_{31\bar 3\bar 1} \} \cup\{
  \varDelta^1_{23} = 0\}    $.
It follows from Lemma~\ref{Lemma:regtr} and Remark~\ref{Remark:good} that $\rho'$ and $\rho''$ are good, $\partial$-regular representations.
In what follows we let $\omega_{12}$ and $\omega_{13}$ denote the pullback of the symplectic form
$\omega_{P'}$ on $\R^*(P',\partial P',\SLL_3(\CC))_{\rho'}$ and
$\omega_{P''}$ on $\R^*(P'',\partial P'',\SLL_3(\CC))_{\rho''}$ respectively.

Given a basis $\mathbf{v}$ for $H^1(\gamma_1,\Ad\rho) $
we can choose lifts
$\mathbf{v}'\subset H^1(P',\Ad\rho) $, and $\mathbf{v}''\subset H^1(P'',\Ad\rho) $ which map to $\mathbf{v}$. By exactness there exists $\widetilde{\mathbf{v}}\subset H^1(S,\Ad\rho)$ which maps to $(\mathbf{v}',-\mathbf{v}'')$.

\begin{Lemma}
\label{lemma:iOmega}
Let   $\mathbf{u}$  a basis  for $H^0(\gamma_1,\Ad\rho) $ and $\mathbf{v}$  a basis  for $H^1(\gamma_1,\Ad\rho) $.
Then
\begin{equation}
 \label{mult:OmegaS} 
\Omega_S =\pm
\frac{  \langle \wedge\mathbf{u}, \wedge\mathbf{v} \rangle   }{(\nu_1 \wedge \nu_{23}) (\wedge\mathbf{v}\wedge\beta(\mathbf{u} )) }
\omega_{12}\wedge \omega_{13} \wedge\nu_2
 \wedge\nu_3\wedge\nu_{12}\wedge\nu_{13} \wedge \nu_1\wedge \nu_{23} .
 \end{equation}
\end{Lemma}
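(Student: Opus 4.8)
The plan is to prove the identity by evaluating both sides on one basis $\mathbf h$ of $H^1(S;\Ad\rho)$ adapted to the Mayer--Vietoris sequence \eqref{eqn:lesp'p''}, and to compute the left-hand side $\Omega_S(\wedge\mathbf h)=\pm\tor(S,\Ad\rho,\mathbf h)$ by the multiplicativity of Reidemeister torsion for the decomposition $S=P'\cup P''$, $P'\cap P''=\gamma_1$. The inclusions give a short exact sequence of based cochain complexes
\[
0\to C^*(S;\Ad\rho)\to C^*(P';\Ad\rho)\oplus C^*(P'';\Ad\rho)\to C^*(\gamma_1;\Ad\rho)\to 0,
\]
whose long exact sequence in cohomology is \eqref{eqn:lesp'p''}. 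Milnor's product formula \cite{MilnorBullAMS}, applied with cohomology bases compatible with $\beta$, $j$ and $\Delta$, yields
\[
\tor(P')\,\tor(P'')=\pm\,\tor(S)\,\tor(\gamma_1)\,\tor(\mathcal H),
\]
where $\tor(\mathcal H)$ is the torsion of \eqref{eqn:lesp'p''} regarded as an acyclic based complex. Since $\rho'$ and $\rho''$ are good, the groups $H^0$ and $H^2$ of $S$, $P'$, $P''$ vanish, so each torsion is determined by its $H^1$-basis alone.

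First I would fix the bases. Starting from $\mathbf u$ on $H^0(\gamma_1)$ and $\mathbf v$ on $H^1(\gamma_1)$, take the lifts $\mathbf v'\subset H^1(P')$, $\mathbf v''\subset H^1(P'')$ and $\widetilde{\mathbf v}\subset H^1(S)$ of the paragraph preceding the lemma. I would complete $\mathbf v'$, $\mathbf v''$ to bases $\mathbf h_{P'}$, $\mathbf h_{P''}$ of $H^1(P')$, $H^1(P'')$ by the relative tangent vectors together with lifts of the boundary classes on $\gamma_2,\gamma_1\gamma_2$ and on $\gamma_3,\gamma_1\gamma_3$. Using $\beta$ and $j$, the vectors $\beta(\mathbf u)$, $\widetilde{\mathbf v}$, and the $j$-images of these relative and boundary vectors assemble into a basis $\mathbf h$ of $H^1(S)$.

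Next I would evaluate both sides on $\mathbf h$. The left side is $\Omega_S(\wedge\mathbf h)=\pm\tor(S)$, which by the product formula equals $\pm\,\tor(P')\tor(P'')/(\tor(\gamma_1)\tor(\mathcal H))$. Witten's formula on the pairs of pants (Theorem~\ref{Thm:vol}, with the $2$-dimensional relative variety so that the symplectic power reduces to $\omega_{12}$, resp.\ $\omega_{13}$) gives $\Omega_{P'}=\pm\,\omega_{12}\wedge\nu_1\wedge\nu_2\wedge\nu_{12}$ and $\Omega_{P''}=\pm\,\omega_{13}\wedge\nu_1\wedge\nu_3\wedge\nu_{13}$; each contributes one factor $\nu_1(\wedge\mathbf v)$, because $\mathbf v'$ and $\mathbf v''$ restrict to $\mathbf v$ on $\gamma_1$. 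With $\tor(\gamma_1)=\pm\,\nu_1(\wedge\mathbf v)^2/\langle\wedge\mathbf v,\wedge\mathbf u\rangle$ (Definition~\ref{Def:peripheralform}), the two factors $\nu_1(\wedge\mathbf v)$ in the numerator cancel against $\tor(\gamma_1)$, leaving the single form $\nu_1$ and the pairing $\langle\wedge\mathbf u,\wedge\mathbf v\rangle$. On the right side, the block structure of $\mathbf h$ lets the wedge factor as a product of the individual forms on their natural subspaces: $\omega_{12}$, $\omega_{13}$ on the relative directions of $P'$, $P''$; $\nu_2,\nu_{12},\nu_3,\nu_{13}$ on the boundary directions of $S$; $\nu_1(\wedge\mathbf v)$ on $\widetilde{\mathbf v}$; and $\nu_{23}(\wedge\beta(\mathbf u))$ on the bending vectors $\beta(\mathbf u)$. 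The denominator of the stated scalar cancels exactly these last two factors, so that equality of the two evaluations reduces to the single identity $\tor(\mathcal H)=\pm1$.

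The main obstacle is precisely this last claim together with the attendant sign and determinant bookkeeping: one must verify that, for the compatible choice of $\mathbf h$ above, the torsion of the acyclic Mayer--Vietoris complex \eqref{eqn:lesp'p''} is trivial, and must justify the block-triangular factorization of the right-hand wedge while tracking the orderings and signs hidden in Milnor's product formula. The geometric content that makes the bookkeeping work is that $\beta(\mathbf u)$ restricts to zero on $\gamma_1$ (so $\nu_1$ pairs with $\widetilde{\mathbf v}$, not with the bending vectors), while, as made explicit in Lemma~\ref{lemma:nuDelta}, the bending vectors $\beta(\mathbf u)$ are detected by $\nu_{23}$ on the internal curve $\gamma_2\gamma_3$; this is exactly why $\nu_{23}$, rather than a boundary form of $S$, appears in the scalar factor.
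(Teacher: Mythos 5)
Your proposal follows essentially the same route as the paper's proof: evaluate both sides on a basis of $H^1(S;\Ad\rho)$ adapted to the Mayer--Vietoris sequence \eqref{eqn:lesp'p''}, apply the product formula for the torsion of the decomposition $S=P'\cup P''$ along $\gamma_1$, invoke Theorem~\ref{Thm:vol} on each pair of pants, and cancel the two factors $\nu_1(\wedge\mathbf v)$ against $\tor(\gamma_1)$ via Definition~\ref{Def:peripheralform} to produce $\langle\wedge\mathbf u,\wedge\mathbf v\rangle$. The point you single out as the main obstacle --- that the torsion of the based acyclic complex \eqref{eqn:lesp'p''} is $\pm1$ --- is exactly what the choice of cohomology bases compatible with $\beta$, $j$, $\Delta$ guarantees (and is how the paper applies the product formula), so it is standard bookkeeping rather than a gap.
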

 
\begin{proof}[Proof of the lemma]
Chose $\mathbf{a}'$ a basis of $\ker ( H^1(P',\Ad\rho)\to  H^1(P'\cap P'',\Ad\rho))$ and 
$\mathbf{a}''$ a basis of $\ker ( H^1(P'',\Ad\rho)\to  H^1(P'\cap P'',\Ad\rho))$. 
Moreover, we can chose  lifts $\widetilde{(\mathbf{a}')}$, $\widetilde{(\mathbf{a}'')}\subset
H^1(S,\Ad\rho)$ which map under $j\colon H^1(S,\Ad\rho) \to H^1(P',\Ad\rho)\oplus H^1(P'',\Ad\rho)$ to 
$(\mathbf{a}',\mathbf{0})$ and $(\mathbf{0},\mathbf{a}'')$ respectively.

Then, by using  \eqref{eqn:lesp'p''}, $\mathbf{a}'\sqcup \mathbf v'$ is a basis for   $H^1(P',\Ad\rho)$, $\mathbf{a}''\sqcup \mathbf v''$ is a basis for   $H^1(P'',\Ad\rho)$
and $\widetilde{(\mathbf{a}')}\sqcup \widetilde{(\mathbf{a}'')}\sqcup \beta (\mathbf u)\sqcup \widetilde{\mathbf v}$ is a basis for $H^1(S,\Ad\rho)$.

The product formula applied to   \eqref{eqn:lesp'p''} yields:
\begin{multline*}
\Omega_S( \wedge\widetilde{(\mathbf{a}')}\wedge \widetilde{(\mathbf{a}'')}\wedge\beta( \mathbf{u})\wedge \widetilde{\mathbf{v}} )=\pm
\frac{\Omega_{P'}
( \wedge\mathbf{a}'  \wedge\mathbf{v}' )\, \Omega_{P''} (\wedge\mathbf{a}''  \wedge\mathbf{v}'' )  }{\operatorname{tor}(P'\cap P'',\Ad\rho,\mathbf u,\mathbf v)}
\\
=\pm
(\omega_{12}\wedge\nu_2\wedge\nu_{12}) (\wedge\mathbf{a}') 
(\omega_{13}\wedge\nu_3\wedge\nu_{13}) (\wedge\mathbf{a}'') 
\frac{\nu_1(\wedge\mathbf{v})^2}{ \operatorname{tor}(P'\cap P'',\Ad\rho,\mathbf u,\mathbf v) } .
\end{multline*}
The last equality follows since 
$d\,\tau_2$, $d\,\tau_{\bar{2}}$, $d\,\tau_{12}$, $d\,\tau_{\bar1\bar2}$, $d\,\tau_{1\bar2}$, $d\,\tau_{\bar1 2}$ 
vanish on each cocycle $v_i'$ of $\mathbf{v}'= (v_1',v_2')$, and 
$d\,\tau_3$, $d\,\tau_{\bar{3}}$, $d\,\tau_{13}$, $d\,\tau_{\bar1\bar3}$, $d\,\tau_{1\bar3}$, $d\,\tau_{\bar1 3}$
vanish on each cocycle $v_i''$ of $\mathbf{v}''= (v_1'',v_2'')$.

By Definition~\ref{Def:peripheralform},
$
{\nu_1(\mathbf{v})^2}/{ \operatorname{tor}(P'\cap P'',\Ad\rho,\mathbf u,\mathbf v) }
=
\pm\langle \wedge\mathbf{u}, \wedge\mathbf{v}  \rangle
$,
hence
\begin{multline*}
\Omega_S( \wedge\widetilde{(\mathbf{a}')}\wedge \widetilde{(\mathbf{a}'')}\wedge\beta( \mathbf{u})\wedge \widetilde{\mathbf{v}} )
=\pm
(\omega_{12}\wedge\nu_2\wedge\nu_{12}) (\wedge\mathbf{a}') 
(\omega_{13}\wedge\nu_3\wedge\nu_{13}) (\wedge\mathbf{a}'') \langle \wedge\mathbf{u}, \wedge\mathbf{v}  \rangle
 \\
=\pm
(\omega_{12}\wedge\nu_2\wedge\nu_{12}) (\wedge\mathbf{a}') 
(\omega_{13}\wedge\nu_3\wedge\nu_{13}) (\wedge\mathbf{a}'') \langle \wedge\mathbf{u}, \wedge\mathbf{v}  \rangle
\frac {(\nu_1 \wedge \nu_{23}) (\wedge\mathbf{v}\wedge\beta(\mathbf{u} )) }{(\nu_1 \wedge \nu_{23}) (\wedge\mathbf{v}\wedge\beta(\mathbf{u} )) }\\
=
\pm
\frac{ \langle \wedge\mathbf{u}, \wedge\mathbf{v}  \rangle}{  (\nu_1 \wedge \nu_{23}) (\wedge\mathbf{v}\wedge\beta(\mathbf{u} )) }
\omega_{12}\wedge \omega_{13} \wedge\nu_2
 \wedge\nu_3\wedge\nu_{12}\wedge\nu_{13} \wedge \nu_1\wedge \nu_{23} 
 \big(   \wedge\widetilde{(\mathbf{a}')}\wedge\widetilde{(\mathbf{a}'')} \wedge\beta(\mathbf{u})\wedge\widetilde{\mathbf{v}}   \big)\,.
\end{multline*}
The last equality follows since since $\beta({\mathbf{u}})$ is an infinitesimal bendings that vanish on $\nu_1$, and $\beta({\mathbf{u}})$ is in the kernel of $j$ (see \eqref{eqn:lesp'p''}).
Moreover, the bases $\widetilde{(\mathbf{a}')}$ and $\widetilde{(\mathbf{a}'')}$ map to
$(\mathbf{a}',\mathbf{0})$ and $(\mathbf{0},\mathbf{a}'')$ respectively.
\end{proof}

To conclude the proof of Theorem~\ref{thm:FkSL3}
%
we need to compute the quotient
$$
\frac{  \langle \wedge\mathbf{u}, \wedge\mathbf{v} \rangle   }{  
(\nu_1 \wedge \nu_{23}) (\wedge\mathbf{v}\wedge\beta(\mathbf{u} ))
}
$$
 As $\beta({\mathbf{u}})$ consist of infinitesimal bendings that vanish on $d\,\uptau_1$ and $d\,\uptau_{\bar 1}$,
 $$
 {  
(\nu_1 \wedge \nu_{23}) (\wedge\mathbf{v}\wedge\beta(\mathbf{u} ))
}={\nu_1(\wedge\mathbf{v})\nu_{23}(\wedge\beta(\mathbf{u} )) }.
 $$
Write 
$$
A=\rho(\gamma_1), \ 
B=\rho(\gamma_2), \textrm{ and } C=\rho(\gamma_3), 
$$
and 
$$
x=A-\frac{\operatorname{tr}(A) }{3}\operatorname{Id}\quad \textrm{ and }\quad  y=A^{-1}-\frac{\operatorname{tr}(A^{-1}) }{3}\operatorname{Id}.
$$
Hence $x,y  \in \mathfrak{sl}_3(\CC)
$
generate the  $A$-invariant subspace by Lemma~\ref{Lemma:reginv}.
By the natural identification $H^0(\gamma_1,\Ad\rho)\cong \mathfrak{sl}_3(\CC)^{\Ad_A}$, we chose $\mathbf u=\{x,y\}$.

To finish the proof of Theorem~\ref{thm:FkSL3}, we
assume semi-simplicity, so that  $H^1(\gamma_1,\Ad\rho  )\cong H^1(\gamma_1,\mathbb R)\otimes_\RR\mathfrak{sl}_3(\CC)^{\Ad_A}$ and we 
may chose $\mathbf{v}$ to be $\{x,y\}$ times the fundamental class.
Therefore
\begin{equation}
 \label{eqn:wedgeuv}
 \langle \wedge\mathbf{u}, \wedge\mathbf{v}  \rangle=\det
 \begin{pmatrix}
   \operatorname{tr}(x^2) & \operatorname{tr}(x y) \\
    \operatorname{tr}(x y) &    \operatorname{tr}(y^2)
 \end{pmatrix} .
\end{equation}
Next we compute $\nu(\wedge\mathbf{v})$. Write $\mathbf{v}=\{v_x,v_y\}$, where $v_x$ and $v_y$ are the infinitesimal deformations corresponding to
$x$ and $y$ respectively. 
Namely, the tangent vector to the infinitesimal paths 
\begin{equation}
 \label{eqn:infvxvyg1}
 \gamma_1\mapsto (\operatorname{Id}+\varepsilon x)A= A+\varepsilon x\,A \quad \text{ and }\quad  \gamma_1\mapsto (\operatorname{Id}+\varepsilon y)A= A+\varepsilon y\, A \quad \text{ in $\CC[\varepsilon]/\varepsilon^2$.}
\end{equation}
These infinitesimal deformations evaluated at $\gamma_1^{-1}$ are, respectively,
\begin{equation}
 \label{eqn:infvxvyg1bar}
\gamma_1^{-1}\mapsto A^{-1}(\operatorname{Id}-\varepsilon x)= A^{-1}-\varepsilon A^{-1}\, x \quad \text{ and }\quad 
\gamma_1^{-1}\mapsto A^{-1}(\operatorname{Id}-\varepsilon y)= A^{-1}-\varepsilon  A^{-1}\,y \quad 
\text{ in $\CC[\varepsilon]/\varepsilon^2$.}
\end{equation}
Thus, $d\, \uptau_1(v_x)=\operatorname{tr}(x\, A)$, and as $\operatorname{tr}(x)=0$, $\operatorname{tr}(x\, A)=\operatorname{tr}(x\, A- \frac{\uptau_1}{3}x)=\operatorname{tr}(x\, x)$.
By the very same argument, $\operatorname{tr}(y\, A)=\operatorname{tr}( A^{-1}x)=\operatorname{tr}(x\, y)$ and $\operatorname{tr}(A^{-1}\, y)=\operatorname{tr}(y^2)$, and 
\eqref{eqn:infvxvyg1} and \eqref{eqn:infvxvyg1bar} yield
\begin{equation}
\label{eqn:dt1dt2vxvy}
\begin{array}{ll}
  d\, \uptau_1(v_x)=\operatorname{tr}(x^2)\,,  \qquad\qquad & d\, \uptau_1(v_y)=\operatorname{tr}(x\, y)\,, \\
 d\, \uptau_{\bar 1}(v_x)=-\operatorname{tr}(x\, y)\,, &   d\, \uptau_{\bar 1}(v_y)=-\operatorname{tr}(y^2) .
\end{array}
\end{equation}
From  \eqref{eqn:wedgeuv} and \eqref{eqn:dt1dt2vxvy} we have
\begin{equation}
 \label{eqn:nu1}
d\, \uptau_1\wedge d\, \uptau_{\bar 1} (\wedge \mathbf v)= \pm \langle \wedge\mathbf{u}, \wedge\mathbf{v}  \rangle .
\end{equation}
In addition, by Lemma~\ref{lemma:nuDelta}
\begin{equation}
\label{eqn:nu23beta}
  \nu_{23} (  \beta(\mathbf{u} ))= \sqrt{-3}\, d\, \uptau_{23}\wedge d\, \uptau_{\bar 2\bar 3} (  \beta(\mathbf{u} )) = \pm\sqrt{-3} \varDelta_{23}.
\end{equation}
Hence,  as $\nu_1=\sqrt{-3 } \, d\, \uptau_{1}\wedge d\, \uptau_{\bar 1}$,  by \eqref{eqn:nu1} and \eqref{eqn:nu23beta}:
$$
\frac{  \langle \wedge\mathbf{u}, \wedge\mathbf{v} \rangle   }{  
(\nu_1 \wedge \nu_{23}) (\wedge\mathbf{v}\wedge\beta(\mathbf{u} ))
}
 =
\frac{  \langle \wedge\mathbf{u}, \wedge\mathbf{v} \rangle   }{\nu_1(\wedge\mathbf{v})\nu_{23}(\wedge\beta(\mathbf{u} )) }
=\pm\frac{1}{ 3\varDelta^1_{23}} .
$$
Now the volume formula follows from  Lemma~\ref{lemma:iOmega},  the last equation,  and
the expression of the symplectic forms $\omega_{12}$ and $\omega_{13}$ in \eqref{eqn:symplecticSL3}.
\end{proof}

\begin{footnotesize}
\bibliographystyle{plain}
\bibliography{vol}
%
%
%
%
%
%
%
%
%
%
\end{footnotesize}

\end{document}